\theoremstyle{plain}
\newtheorem{prop}{Proposition}[section]
\newtheorem{teo}[prop]{Theorem}
\newtheorem*{prop3}{Proposition 3.1}
\newtheorem*{teo3}{Theorem 5.13}
\newtheorem*{teo4}{Theorem 6.2}
\newtheorem{defn}[prop]{Definition}
\newtheorem{cor}[prop]{Corollary}
\newtheorem{lem}[prop]{Lemma}
\newtheorem{es}[prop]{Example}
\newtheorem{fact}[prop]{Fact}
\theoremstyle{remark}
\newtheorem{oss}[prop]{Remark}
\definecolor{red}{rgb}{1,0,0}
\definecolor{green}{rgb}{0,1,0.2}
\title{Rational curves on $\overline{M}_g$ and K3 surfaces}
\author{L. BENZO}
\date{}
\begin{document}
\maketitle
\footnotetext{\noindent 2000 {\em Mathematics Subject Classification}.
Primary: 14J28, 14H10, Secondary: 14D06, 14D15.
\newline \noindent{{\em Keywords and phrases.} Rational curves, K3 surfaces, moduli space, fibrations, splitting type.}}
\begin{abstract}
\noindent Let $(S,L)$ be a smooth primitively polarized K3 surface of genus $g$ and $f:X \rightarrow \mathbb{P}^1$ the fibration defined by a linear pencil in $|L|$. For $f$ general and $g \geq 7$, we work out the splitting type of the locally free sheaf $\Psi^{*}_f T_{{\overline{M}}_g}$, where $\Psi_f$ is the modular morphism associated to $f$. We show that this splitting type encodes the fundamental geometrical information attached to Mukai's projection map $\mathcal{P}_g \rightarrow \overline{\mathcal{M}}_g$, where $\mathcal{P}_g$ is the stack parameterizing pairs $(S,C)$ with $(S,L)$ as above and $C \in |L|$ a stable curve.
Moreover, we work out conditions on a fibration $f$ to induce a modular morphism $\Psi_f$ such that the normal sheaf $N_{\Psi_f}$ is locally free.
\end{abstract}
\begin{section}{Introduction}
The theory of rational curves on algebraic varieties has played a fundamental role in algebraic geometry in the last decades.\\
This has been mainly motivated by the importance of rational curves in the framework of the Minimal Model Program and of the study of higher dimensional algebraic varieties. More generally, the existence and the "number" of rational curves on a variety $X$ provides a rough measure of the complexity of $X$. For example the uniruledness of $X$, i.e. the fact that there is a rational curve passing through a general point of $X$, implies that $X$ has negative Kodaira dimension. The converse is the claim of a famous conjecture. If $X$ is not uniruled, one can consider the subvariety of $X$ spanned by rational curves and study its geometry.\\
Let $X$ be a complex projective variety of dimension $n$ and let $g:\mathbb{P}^1 \rightarrow X$ be a morphism whose image is contained in the smooth locus of $X$. The sheaf $g^{*}T_X$ is a locally free sheaf of rank $n$ over $\mathbb{P}^1$, which splits into a direct sum of line bundles by the so-called "Grothendieck's theorem".\\
Let $g^{*}T_X \cong \oplus_{i=1}^{n}\mathcal{O}_{\mathbb{P}^1}(a_i)$, $a_1 \geq a_2 \geq ... \geq a_n$. The $n$-tuple $(a_1,...,a_n)$, which we will call the \emph{splitting type} of $g^{*}T_X$, provides information concerning the deformations of the morphism $g$.\\
If $a_i \geq r$ for a nonnegative integer $r$ and all $i=1,...,n$, the morphism $g$ is said to be \emph{$r$-free}. If $g$ is $r$-free, then there is a deformation of $g$ whose image contains $r+1$ general points of the variety $X$. In particular the existence of a $0$-free ($1$-free) morphism to $X$ yields the uniruledness (rationally connectedness) of $X$ itself. $0$-free morphisms are also called \emph{free}.\\
A remarkable property is that, for a general deformation $\widetilde{g}$ of $g$, the splitting type of $\widetilde{g}^{*}T_X$ is independent on $\widetilde{g}$, a fact which will allow us to consider the splitting type associated to a fibration which is general among those considered.\\
Let now $X$ be a nonsingular projective surface and $B$ a nonsingular projective connected curve. A non-isotrivial fibration $f:X \rightarrow B$ induces a nonconstant modular map $\Psi_f:B \rightarrow \overline{M}_g$, where $\overline{M}_g$ is the coarse moduli space of stable curves of genus $g$.\\
One can consider the sheaf $N_{\Psi_f}$, which is defined as the cokernel of the map $0 \rightarrow T_B \xrightarrow{\kappa_f} Ext^1_f(\Omega^1_{X/B},\mathcal{O}_X)$. The map $\kappa_f$ is fiberwise, over the points $b$ such that the fibre $X(b)$ is reduced, the Kodaira-Spencer map of the family $f$ at $b$. If the fibration $f$ is stable and the image of $\Psi_f$ is contained in the smooth locus of $\overline{M}_g$, one has that $Ext^1_f(\Omega^1_{X/B},\mathcal{O}_X) \cong \Psi^{*}_f T_{\overline{M}_g}$ and $\kappa_f$ is exactly the inclusion in the tangent sequence, hence $N_{\Psi_f}$ can be interpreted as the normal sheaf to the the map $\Psi_f$. In the first part of the article we work out conditions on $f$ such that the sheaf $N_{\Psi_f}$ is locally free. The main statement is the following
\begin{prop3}
\label{npsiflocallyfree1}
Let $f:X \rightarrow B$ be a non-isotrivial fibration with reduced fibres such that $Ext^1_f(\Omega^1_{X/B},\mathcal{O}_X)$ is locally free and $h^{0}({T_X}_{|F})=0$ for all smooth fibres of $f$. Then $N_{\Psi_f}$ is locally free.
\end{prop3}
In the second part of the article we introduce \emph{K3-type fibrations} i.e. fibrations $f$ defined by a linear pencil $\Lambda \subset |L|$, where $L$ is an ample globally generated and primitive line bundle on a smooth K3 surface.\\
Let $\mathcal{P}_g$ be the stack parameterizing pairs $(S,C)$ such that $(S,L)$ is a smooth primitively polarized K3 surface of genus $g$ and $C \in |L|$ is a stable curve. One can consider the obvious projection $c_g:\mathcal{P}_g \rightarrow \overline{\mathcal{M}}_g$, where $\overline{\mathcal{M}}_g$ is the moduli stack of stable curves of genus $g$. Let $\mathcal{K}_g \subset \mathcal{M}_g$ be the image of $c_g$ restricted to pairs $(S,C)$ such that $C$ is smooth.\\
Our main result is the following
\begin{teo3}
\label{decomposizioneext1fk32}
Let $f$ be a general K3-type fibration of genus $g \geq 7$. Then
$$\Psi^{*}_f T_{{\overline{M}}_g} \cong \left\{\begin{array}{lc}
\mathcal{O}_{\mathbb{P}^1}(2) \oplus \mathcal{O}_{\mathbb{P}^1}(1)^{\oplus 21-g} \oplus \mathcal{O}_{\mathbb{P}^1}^{\oplus 4g-25}, & 7 \leq g \leq 9\\
\mathcal{O}_{\mathbb{P}^1}(2) \oplus \mathcal{O}_{\mathbb{P}^1}(1)^{\oplus 12} \oplus \mathcal{O}_{\mathbb{P}^1}^{\oplus 13} \oplus \mathcal{O}_{\mathbb{P}^1}(-1), & g=10\\
\mathcal{O}_{\mathbb{P}^1}(2) \oplus \mathcal{O}_{\mathbb{P}^1}(1)^{\oplus 10} \oplus \mathcal{O}_{\mathbb{P}^1}^{\oplus 19}, & g=11 \\
\mathcal{O}_{\mathbb{P}^1}(2) \oplus \mathcal{O}_{\mathbb{P}^1}(1)^{\oplus 12} \oplus \mathcal{O}_{\mathbb{P}^1}^{\oplus 17} \oplus \mathcal{O}_{\mathbb{P}^1}(-1)^{\oplus 3}, & g=12\\
\mathcal{O}_{\mathbb{P}^1}(2) \oplus \mathcal{O}_{\mathbb{P}^1}(1)^{\oplus g-1} \oplus \mathcal{O}_{\mathbb{P}^1}^{\oplus 19} \oplus \mathcal{O}_{\mathbb{P}^1}(-1)^{\oplus 2g-22}, & g \geq 13.
             \end{array}\right.$$
\end{teo3}
This can be stated also in its "geometrically interpreted" version
\begin{teo4}
Let $f$ be a general K3-type fibration of genus $g \geq 7$, let $a_g$ be the dimension of the general fibre of the morphism $c_g$, and $b_g$ the codimension of $\mathcal{K}_g$ in $\mathcal{M}_g$. Then
$$\Psi^{*}_f T_{{\overline{M}}_g} \cong \mathcal{O}_{\mathbb{P}^1}(2) \oplus \mathcal{O}_{\mathbb{P}^1}(1)^{\oplus g-1} \oplus \mathcal{O}_{\mathbb{P}^1}(1)^{\oplus a_g} \oplus \mathcal{O}_{\mathbb{P}^1}^{\oplus 2g-3-a_g-b_g} \oplus \mathcal{O}_{\mathbb{P}^1}(-1)^{\oplus b_g}.$$
\end{teo4}
The assumptions in Proposition 3.1 are satisfied by a general K3-type fibration, hence $N_{\Psi_f}$ is locally free in that case. Moreover, for $g \geq 7$, one has that
$$\Psi^{*}_f T_{\overline{M}_g} \cong T_{\mathbb{P}^1} \oplus N_{\Psi_f}.$$
\end{section}
\begin{section}{Background material}
\begin{subsection}{Basic facts about deformation theory}
In this subsection we follow the treatment in \cite{S}. By a \emph{scheme} (or $k$-scheme) we mean a locally noetherian separated scheme over an algebraically closed field $k$.\\
Let $f:X \rightarrow Y$ be a morphism of algebraic $k$-schemes, let $\mathcal{A}$ be the category of local artinian $k$-algebras with residue field $k$.
A \emph{functor of Artin rings} is defined to be a covariant functor $\mathbb{F}: \mathcal{A} \rightarrow \text{Sets}$.\\
The following functors of Artin rings will be considered in the sequel (see \cite{S}, pp. 162 and 164):\\
\begin{itemize}
\item[-] $\text{Def}_X$ (resp. $\text{Def}^{\prime}_X$), the functor of deformations (resp. locally trivial deformations) of $X$;
\item[-] $\text{Def}_f$ (resp. $\text{Def}^{\prime}_f$), the functor of deformations (resp. locally trivial deformations) of $f$;
\item[-] $\text{Def}_{f/Y}$ (resp. $\text{Def}^{\prime}_{f/Y}$), the functor of deformations (resp. locally trivial deformations) of $f$ leaving the target fixed;
\end{itemize}
If $\mathbb{F}$ is one of the functors above and $A \in \text{Ob}(\mathcal{A})$, $\mathbb{F}(A)$ will be the set of isomorphisms classes of deformations (or locally trivial deformations) of the respective data over $A$.\\
If $A=k[\epsilon]$ (the ring of the dual numbers), the deformation will be called \emph{first order}, if $A \in \text{Ob}(\mathcal{A})$ it will be called \emph{infinitesimal}.
\begin{oss}
If $Y$ is rigid as an abstract scheme (in the sequel the case $Y \cong \mathbb{P}^1$ will be extensively considered) then the functor $\text{Def}_{f/Y}$ coincides with $\text{Def}_f$. If $X$ is nonsingular then each of the functors $\text{Def}$ above coincides with the  corresponding functor $\text{Def}^{\prime}$.
\end{oss}
Let $X$ be a $k$-scheme, let $A \in \text{Ob}(\mathcal{A})$, $\lambda_0 \in \text{Pic}(X)$, $\lambda \in \text{Pic}(X \times \text{Spec}(A))$ and let $\lambda \otimes_A k$ be the pull-back of $\lambda$ under the morphism $A \rightarrow k$. It is possible to give a functor of Artin rings $\text{P}_{\lambda_0}$ by defining
$$\text{P}_{\lambda_0}(A)=\left\{\lambda \in X \times \text{Spec}(A) : \lambda \otimes_A k = \lambda_0\right\}.$$
Let $L$ be an invertible sheaf on $X$ such that $\lambda_0=[L]$. The elements of $\text{P}_{\lambda_0}(A)$ are the isomorphisms classes of deformations of $L$ over $A$.
\begin{teo}[\cite{S}, Theorem 3.3.1]
\label{xlambdaopicx}
Let $X$ be an algebraic scheme, $\lambda_0 \in \text{Pic}(X)$. If $H^{0}(X,\mathcal{O}_X) \cong k$ and $h^{1}(X,\mathcal{O}_X) < \infty$, then $$\text{P}_{\lambda_0}(k[\epsilon]) \cong H^{1}(X,\mathcal{O}_X).$$
\end{teo}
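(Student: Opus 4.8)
The plan is to realize the tangent space $\text{P}_{\lambda_0}(k[\epsilon])$ as the fibre over $\lambda_0$ of the restriction map between Picard groups and to compute it via the long exact cohomology sequence of a suitable extension of sheaves. Writing $X_\epsilon := X \times \text{Spec}(k[\epsilon])$, which shares its underlying topological space with $X$ and has structure sheaf $\mathcal{O}_{X_\epsilon} = \mathcal{O}_X \oplus \epsilon\,\mathcal{O}_X$, an element of $\text{P}_{\lambda_0}(k[\epsilon])$ is by definition an isomorphism class of line bundles on $X_\epsilon$ restricting to $\lambda_0$ on the closed fibre; equivalently, it is a point in the fibre over $\lambda_0$ of the restriction homomorphism $r:\text{Pic}(X_\epsilon) \to \text{Pic}(X)$.

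The key tool is the short exact sequence of sheaves of abelian groups on the common space $X$,
$$0 \longrightarrow \mathcal{O}_X \longrightarrow \mathcal{O}^{*}_{X_\epsilon} \longrightarrow \mathcal{O}^{*}_X \longrightarrow 0,$$
where the first arrow is the truncated exponential $a \mapsto 1+\epsilon a$ (a homomorphism from the additive to the multiplicative group precisely because $\epsilon^2=0$) and the second is truncation modulo $\epsilon$. I would first verify exactness at the level of stalks: a section of $\mathcal{O}^{*}_{X_\epsilon}$ is $u+\epsilon v$ with $u$ a unit, truncation sends it to $u$ and is surjective, and its kernel consists exactly of the sections $1+\epsilon v$, which is the image of the first map. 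Passing to cohomology and using $H^{1}(X,\mathcal{O}^{*}_{X_\epsilon}) = \text{Pic}(X_\epsilon)$ and $H^{1}(X,\mathcal{O}^{*}_X) = \text{Pic}(X)$, I obtain the exact sequence
$$H^{0}(\mathcal{O}^{*}_{X_\epsilon}) \xrightarrow{\;r_0\;} H^{0}(\mathcal{O}^{*}_X) \xrightarrow{\;\delta\;} H^{1}(X,\mathcal{O}_X) \xrightarrow{\;\alpha\;} \text{Pic}(X_\epsilon) \xrightarrow{\;r\;} \text{Pic}(X).$$

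The hypothesis $H^{0}(X,\mathcal{O}_X) \cong k$ enters exactly here. It forces the global units on $X$ to be the constants $k^{*}$, while the global units on $X_\epsilon$ are the sections $u+\epsilon v$ with $u \in k^{*}$ and $v \in k$; hence $r_0$ is surjective and the connecting map $\delta$ vanishes. Consequently $\alpha$ is injective with image $\ker(r)$, so $\ker(r) \cong H^{1}(X,\mathcal{O}_X)$, which is finite-dimensional by the assumption $h^{1}(X,\mathcal{O}_X) < \infty$. Finally I would pin down the basepoint: the pullback $p^{*}\lambda_0$ under the projection $p:X_\epsilon \to X$ is a distinguished lift of $\lambda_0$, so the fibre $r^{-1}(\lambda_0)=\text{P}_{\lambda_0}(k[\epsilon])$ is a torsor under $\ker(r)$ equipped with a marked point, whence the canonical isomorphism $\text{P}_{\lambda_0}(k[\epsilon]) \cong H^{1}(X,\mathcal{O}_X)$.

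The computation is essentially formal once the exact sequence is in place; the only point demanding care is the bookkeeping that upgrades the torsor $r^{-1}(\lambda_0)$ to a genuine isomorphism rather than a mere abstract bijection, which is why both the distinguished lift $p^{*}\lambda_0$ and the vanishing of $\delta$ must be invoked. I do not anticipate any serious obstacle beyond the stalk-level verification of exactness of the truncated exponential sequence.
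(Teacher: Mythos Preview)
The paper does not supply its own proof of this statement: it is quoted verbatim from Sernesi's book (\cite{S}, Theorem 3.3.1) and used as background. Your argument via the truncated exponential sequence $0 \to \mathcal{O}_X \to \mathcal{O}^{*}_{X_\epsilon} \to \mathcal{O}^{*}_X \to 0$ is correct and is in fact the standard proof one finds in the reference, so there is nothing substantive to compare.
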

\begin{prop}[\cite{S}, Proposition 1.2.9]
Let $X$ be an algebraic variety. There is an isomorphism:
$$\kappa: \emph{Def}^{\prime}_X(k[\epsilon]) \cong H^{1}(X,T_X)$$
called the \emph{Kodaira-Spencer correspondence}.\\
In particular, if $X$ is nonsingular there is an isomorphism:
$$\kappa: \emph{Def}_X(k[\epsilon])  \cong H^{1}(X,T_X).$$
\end{prop}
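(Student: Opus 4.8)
The plan is to construct the map $\kappa$ explicitly through \v{C}ech gluing data and then to verify that it is a well-defined bijection, linear over $k$. First I would fix an affine open cover $\mathcal{U}=\{U_i\}$ of $X$ and observe that, since the deformations in $\text{Def}^{\prime}_X$ are required to be \emph{locally trivial}, the restriction of any class $\xi \in \text{Def}^{\prime}_X(k[\epsilon])$ to each $U_i$ is isomorphic to the trivial deformation $U_i \times \text{Spec}(k[\epsilon])$. Choosing such trivializations $\theta_i$, the discrepancy on a double intersection $U_{ij}=U_i \cap U_j$ is an automorphism $\theta_{ij}=\theta_i \circ \theta_j^{-1}$ of $U_{ij} \times \text{Spec}(k[\epsilon])$ which restricts to the identity on the central fibre $U_{ij}$.

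The key local computation is the identification of such automorphisms with derivations. Because $\epsilon^2=0$, an automorphism of $\mathcal{O}_{U_{ij}}[\epsilon]$ inducing the identity modulo $\epsilon$ is forced to have the form $a \mapsto a + \epsilon\, d_{ij}(a)$ for a uniquely determined $k$-derivation $d_{ij}$, i.e. a section $d_{ij} \in \Gamma(U_{ij},T_X)$. I would then check that the (a priori non-abelian) cocycle condition $\theta_{ij}\theta_{jk}=\theta_{ik}$ translates, to first order in $\epsilon$, into the additive relation $d_{ij}+d_{jk}=d_{ik}$, so that $\{d_{ij}\}$ is a \v{C}ech $1$-cocycle with values in $T_X$. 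Its class in $H^{1}(X,T_X)$ is by definition $\kappa(\xi)$.

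Next I would prove that $\kappa$ is well defined and bijective. Replacing the trivializations $\theta_i$ by other ones amounts to composing with automorphisms of $U_i \times \text{Spec}(k[\epsilon])$ that are the identity modulo $\epsilon$, hence with sections $e_i \in \Gamma(U_i,T_X)$; this modifies the cocycle by the coboundary $d_{ij}\mapsto d_{ij}+e_i-e_j$, so the cohomology class is independent of the choices, and one checks it is also stable under refinement of $\mathcal{U}$. Surjectivity follows by reversing the construction: given a cocycle $\{d_{ij}\}$, the associated automorphisms glue the trivial pieces $U_i \times \text{Spec}(k[\epsilon])$ into a deformation of $X$, flat over $k[\epsilon]$ by construction. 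Injectivity is the statement that cohomologous cocycles yield isomorphic deformations, the coboundary furnishing the required isomorphism; linearity of $\kappa$ is transparent from the additive nature of the gluing.

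Finally, for the \emph{in particular} clause I would invoke the rigidity of smooth affine schemes: when $X$ is nonsingular the cover may be chosen with each $U_i$ smooth and affine, and the infinitesimal lifting criterion for smoothness shows that every deformation of $U_i$ over $k[\epsilon]$ is trivial. Hence for nonsingular $X$ every deformation is automatically locally trivial, so $\text{Def}_X(k[\epsilon])=\text{Def}^{\prime}_X(k[\epsilon])$ and the first isomorphism specializes to the second. The main obstacle I anticipate is the bookkeeping in the local-to-global passage, namely verifying carefully that the first-order part of the non-abelian cocycle condition among automorphisms reduces exactly to the additive \v{C}ech cocycle condition and that this is compatible with refinement; once the derivation identification is in hand, the remaining verifications are formal.
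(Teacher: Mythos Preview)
Your argument is correct and is precisely the classical \v{C}ech-cocycle construction of the Kodaira--Spencer correspondence. Note, however, that the paper does not supply its own proof of this proposition: it is quoted as background from \cite{S}, Proposition 1.2.9, and the proof given there is exactly the one you outline (trivialize locally, identify automorphisms over $k[\epsilon]$ reducing to the identity with derivations, and show the resulting \v{C}ech $1$-cocycle class is well defined and exhausts $H^1(X,T_X)$). So there is nothing to compare: your proposal reproduces the standard reference argument, and the only point worth emphasizing is the one you already flag, namely that for nonsingular $X$ the rigidity of smooth affine schemes (infinitesimal lifting) is what collapses $\text{Def}_X$ onto $\text{Def}^{\prime}_X$.
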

\begin{defn}
For every locally trivial first order deformation $\xi$ of an algebraic variety $X$, the cohomology class $\kappa(\xi) \in H^{1}(T_X)$ is called the \emph{Kodaira-Spencer class of $\xi$.}
\end{defn}
Let $S$ be an algebraic scheme and let $\xi:\mathcal{X} \rightarrow S$ be a family of deformations of a nonsingular variety $X$. Using the 1-1 correspondence between vectors in $T_{s}S$ and morphisms $\text{Spec}(k[\epsilon]) \rightarrow S$ with image $s$, one can define a linear map
$$\kappa_{\xi}(s):T_{s}S \rightarrow H^{1}(X,T_X)$$
by sending a vector $v$ to the Kodaira-Spencer class of the first order deformation obtained by pulling back $\xi$ by the morphism corresponding to $v$.\\
The map $\kappa_{\xi}(s)$ is called the \emph{Kodaira-Spencer map of the family $\xi$ at $s$}.\\
A Kodaira-Spencer map can be defined in a more general way for reduced schemes:
\begin{prop}[\cite{S}, Theorem 2.4.1 (iv)]
\label{xreduceddefxkepsilon}
Let $X$ be a reduced algebraic scheme. There is an isomorphism:
$$\kappa: \emph{Def}_X(k[\epsilon]) \cong \emph{Ext}^1_{\mathcal{O}_X}(\Omega^1_{X},\mathcal{O}_X).$$
\end{prop}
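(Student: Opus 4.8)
The statement is quoted from \cite{S}, so strictly one may simply cite it; nevertheless it is instructive to indicate how one would prove it, since it is exactly the singular refinement of the Kodaira--Spencer correspondence recorded above. The plan is to identify $\mathrm{Def}_X(k[\epsilon])$ with the first cotangent cohomology module and then to compare the latter with $\mathrm{Ext}^1_{\mathcal{O}_X}(\Omega^1_X,\mathcal{O}_X)$. A first order deformation of $X$ is a flat $k[\epsilon]$-scheme $\mathcal{X}$ with closed fibre $X$; flatness over the dual numbers is equivalent to the structure sheaf fitting into a square-zero extension of sheaves of $k$-algebras
$$0 \to \mathcal{O}_X \to \mathcal{O}_{\mathcal{X}} \to \mathcal{O}_X \to 0,$$
the kernel being $\epsilon\mathcal{O}_{\mathcal{X}}\cong \mathcal{O}_X$. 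Hence $\mathrm{Def}_X(k[\epsilon])$ is canonically the set of isomorphism classes of such extensions, which by the general formalism of the cotangent complex is the hyper-Ext group $T^1_X := \mathbb{E}\mathrm{xt}^1(\mathbf{L}_{X/k},\mathcal{O}_X)$. The whole content is therefore to show that, for $X$ reduced, the truncation map $\mathbf{L}_{X/k}\to \Omega^1_X=\mathcal{H}^0(\mathbf{L}_{X/k})$ induces an isomorphism $\mathrm{Ext}^1(\Omega^1_X,\mathcal{O}_X)\xrightarrow{\sim} T^1_X$.

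To carry this out I would argue locally and then globalise. On an affine chart $\mathrm{Spec}(A)$ I would choose a presentation $A=P/I$ with $P$ smooth over $k$, so that the truncated cotangent complex is represented by the two-term complex $[\,I/I^2 \xrightarrow{\delta}\Omega^1_{P/k}\otimes_P A\,]$ in degrees $-1,0$, with $\mathcal{H}^0=\Omega^1_{A/k}$ and $\mathcal{H}^{-1}=\ker\delta=:H_1$. The truncation distinguished triangle $H_1[1]\to \mathbf{L}_{A/k}\to \Omega^1_{A/k}$, after applying $\mathbb{R}\mathrm{Hom}(-,A)$, yields the exact sequence
$$0 \to \mathrm{Ext}^1_A(\Omega^1_{A/k},A) \to T^1(A/k,A) \to \mathrm{Hom}_A(H_1,A) \to \mathrm{Ext}^2_A(\Omega^1_{A/k},A),$$
so the comparison map is already injective and its cokernel is controlled by $\mathrm{Hom}_A(H_1,A)$.

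Here reducedness enters decisively. Over an algebraically closed field a reduced $A$ is generically smooth, so $H_1=\mathcal{H}^{-1}(\mathbf{L}_{A/k})$, being supported on the nowhere dense singular locus, is a torsion module; on the other hand a reduced ring embeds into its total ring of fractions, hence $A$ is torsion-free. Therefore $\mathrm{Hom}_A(H_1,A)=0$ and the comparison map is a local isomorphism, valid over every small affine and hence on stalks. Globalising is then formal: the natural transformation $\mathrm{Ext}^\bullet(\Omega^1_X,\mathcal{O}_X)\to T^\bullet_X$ is a morphism of the two local-to-global spectral sequences $H^p(X,\mathcal{E}xt^q(\Omega^1_X,\mathcal{O}_X))$ and $H^p(X,\mathcal{T}^q_X)$, and the local computation shows the sheaves $\mathcal{E}xt^q(\Omega^1_X,\mathcal{O}_X)$ and $\mathcal{T}^q_X$ agree for $q\le 1$; since the degree-one abutment only receives contributions from $(p,q)$ with $q\le 1$, one obtains $\mathrm{Ext}^1(\Omega^1_X,\mathcal{O}_X)\cong T^1_X=\mathrm{Def}_X(k[\epsilon])$.

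The main obstacle, conceptually, is the very first identification $\mathrm{Def}_X(k[\epsilon])\cong T^1_X$: it rests on the full theory of the cotangent complex, or, in the more elementary treatment of \cite{S}, on the Lichtenbaum--Schlessinger cotangent functors together with the gluing of local square-zero extensions via \v{C}ech data. Granting that, the only genuinely new input over the smooth case is the torsion-vanishing $\mathrm{Hom}(H_1,\mathcal{O}_X)=0$, which is precisely where the reduced hypothesis is used and which measures, through the local sheaves $\mathcal{E}xt^1(\Omega^1_X,\mathcal{O}_X)$, the contribution of the singularities of $X$ to its deformations.
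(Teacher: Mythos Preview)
Your argument is correct. Note, however, that the paper does not give its own proof: the proposition is simply quoted from \cite{S}, and the only addition is the subsequent Remark~\ref{ext1conormalsequence}, which makes the map $\kappa$ explicit by sending a first-order deformation $\mathcal{X}$ to the class of its conormal sequence $0 \to \mathcal{O}_X \to \Omega^1_{\mathcal{X}|X} \to \Omega^1_X \to 0$.

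Your route is genuinely different from Sernesi's. His argument is elementary and constructive: one identifies first-order deformations with square-zero algebra extensions of $\mathcal{O}_X$ by itself, passes to the conormal sequence to land in $\mathrm{Ext}^1_{\mathcal{O}_X}(\Omega^1_X,\mathcal{O}_X)$, and then builds an explicit inverse by pulling back a given module extension of $\Omega^1_X$ along the universal derivation $d:\mathcal{O}_X \to \Omega^1_X$; reducedness is what guarantees that this procedure recovers the original algebra extension. Your proof instead factors through the identification $\mathrm{Def}_X(k[\epsilon])\cong T^1_X$ and then compares $T^1_X$ with $\mathrm{Ext}^1(\Omega^1_X,\mathcal{O}_X)$ via the truncation triangle and a spectral-sequence comparison. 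This is heavier machinery but more systematic, and it makes transparent exactly where reducedness enters: your torsion-vanishing $\mathrm{Hom}(H_1,\mathcal{O}_X)=0$ is the cotangent-complex avatar of the same phenomenon Sernesi exploits by hand. Either approach suffices for the paper's purposes, since all that is used later is the explicit description of $\kappa$ recorded in Remark~\ref{ext1conormalsequence}.
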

\begin{oss}
\label{ext1conormalsequence}
The isomorphism of Proposition \ref{xreduceddefxkepsilon} is defined in the following way. Let $\xi:\mathcal{X} \rightarrow \text{Spec}(k[\epsilon])$ be a first order deformation of $X$. The conormal sequence of $X\subset \mathcal{X}$
\begin{equation}
\label{conormalsequence}
0 \rightarrow \mathcal{O}_X \rightarrow \Omega^1_{\mathcal{X}|X} \rightarrow \Omega^1_{X} \rightarrow 0
\end{equation}
is exact and yields an element of $\text{Ext}^1_{\mathcal{O}_X}(\Omega^1_X,\mathcal{O}_X)$. Define this element as $\kappa(\xi)$.\\
Given an infinitesimal deformation $\eta:\mathcal{X} \rightarrow \text{Spec}(A)$ of $X$ one has a Kodaira-Spencer map $\kappa_{\eta}:t_A \rightarrow \text{Ext}^1_{\mathcal{O}_X}(\Omega^1_X,\mathcal{O}_X)$ which associates to a tangent vector $v \in t_A$ the conormal sequence (\ref{conormalsequence}) of the pullback of $\eta$ to $\text{Spec}(k[\epsilon])$ defined by $v$.
\end{oss}
Notation as above, a case we are interested in is when $f:X \rightarrow Y$ is a closed embedding.
\begin{teo}[\cite{S}, Theorem 3.4.17]
\label{h1ty<x>}
Let $j:X \hookrightarrow Y$ be a closed embedding of projective schemes with $Y$ nonsingular and let $T_Y \langle X \rangle$ be the sheaf of tangent vectors to $Y$ which are tangent to $X$. Then $\emph{Def}^{\prime}_j$ has a formal semiuniversal deformation. Its tangent space is $H^{1}(T_Y \langle X \rangle)$ and $H^{2}(T_Y \langle X \rangle)$ is an obstruction space.\\
Moreover there is a short exact sequence
\begin{equation}
\label{ixyty<x>}
0 \rightarrow \mathcal{I}_{X/Y} \rightarrow T_Y \langle X \rangle \rightarrow T_{X} \rightarrow 0.
\end{equation}
\end{teo}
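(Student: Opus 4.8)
The plan is to prove the three assertions separately: an explicit \v{C}ech description of locally trivial first-order deformations of $j$, from which the tangent space $H^1(T_Y\langle X\rangle)$ is read off; the verification of Schlessinger's criterion, which produces the formal semiuniversal deformation and exhibits $H^2(T_Y\langle X\rangle)$ as an obstruction space; and the construction of the exact sequence (\ref{ixyty<x>}) from the restriction of vector fields to $X$. The technical heart is the middle step, where the patching of locally trivial deformations must be shown to behave well.

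First I would compute the tangent space, since it drives the rest. Fix an affine open cover $\{U_i\}$ of $Y$. As we deform $j$ \emph{locally trivially}, a first-order deformation of $j$ --- equivalently, of the pair $X\subset Y$ --- is trivial over each $U_i$ and is therefore encoded by transition automorphisms of $U_{ij}\times\text{Spec}(k[\epsilon])$ that reduce to the identity modulo $\epsilon$ and preserve the trivial deformation of $X\cap U_{ij}$. Writing each such automorphism as $\mathrm{id}+\epsilon\,\theta_{ij}$, its action on functions is $f\mapsto f+\epsilon\,\theta_{ij}(f)$, and the requirement that $\mathcal{I}_{X/Y}$ be carried into itself says exactly that $\theta_{ij}$ is a section of $T_Y\langle X\rangle$ over $U_{ij}$. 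The cocycle identity on triple overlaps turns $\{\theta_{ij}\}$ into a $1$-cocycle, two deformations being isomorphic precisely when the cocycles differ by a coboundary; this yields the canonical isomorphism $\text{Def}^{\prime}_j(k[\epsilon])\cong H^1(Y,T_Y\langle X\rangle)$.

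Next I would produce the semiuniversal deformation via Schlessinger's theorem (see \cite{S}), checking conditions $(H_1)$, $(H_2)$ and $(H_3)$ for $\text{Def}^{\prime}_j$. The finiteness condition $(H_3)$ is immediate from the previous step, as $Y$ is projective and $T_Y\langle X\rangle$ is coherent. Conditions $(H_1)$ and $(H_2)$, governing the maps $\text{Def}^{\prime}_j(A'\times_A A'')\to\text{Def}^{\prime}_j(A')\times_{\text{Def}^{\prime}_j(A)}\text{Def}^{\prime}_j(A'')$ attached to a small extension, follow from the same patch-by-patch argument: locally trivial deformations over $A'$ and $A''$ agreeing over $A$ glue over the fibre product, with gluing automorphisms again sections of $T_Y\langle X\rangle$. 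The obstruction statement continues the \v{C}ech computation: given a deformation over a small extension $A'\to A$, local liftings exist by triviality, and the failure of chosen local liftings to agree on overlaps assembles into a $2$-cocycle whose class in $H^2(Y,T_Y\langle X\rangle)$ vanishes if and only if a global lifting exists. The main obstacle lies here, in carrying out this calculus in a manifestly choice-independent and functorial way, so that the resulting classes depend neither on the cover and the trivializations nor on the local liftings.

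Finally I would establish (\ref{ixyty<x>}). Since $T_Y\langle X\rangle$ consists of the derivations $D$ of $\mathcal{O}_Y$ with $D(\mathcal{I}_{X/Y})\subseteq\mathcal{I}_{X/Y}$, each such $D$ descends to a derivation $\bar D$ of $\mathcal{O}_X=\mathcal{O}_Y/\mathcal{I}_{X/Y}$, giving a morphism $T_Y\langle X\rangle\to j_*T_X$ whose kernel is the subsheaf of derivations carrying $\mathcal{O}_Y$ into $\mathcal{I}_{X/Y}$, i.e. the first term of (\ref{ixyty<x>}). Surjectivity onto $T_X$ is the one point at which the nonsingularity of $Y$ is indispensable: as $\Omega^1_Y$ is locally free, every local derivation of $\mathcal{O}_X$ lifts to a derivation of $\mathcal{O}_Y$, and such a lift automatically preserves $\mathcal{I}_{X/Y}$, hence lies in $T_Y\langle X\rangle$. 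Assembling these observations gives the short exact sequence (\ref{ixyty<x>}), with $T_X$ read as $\mathcal{H}om(\Omega^1_X,\mathcal{O}_X)$ when $X$ is singular.
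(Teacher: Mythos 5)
The paper contains no proof of this statement: it is quoted verbatim from Sernesi [S, Theorem 3.4.17], so the only available comparison is with the cited source, and your reconstruction is essentially Sernesi's own argument --- the \v{C}ech description of locally trivial deformations by transition automorphisms $\mathrm{id}+\epsilon\,\theta_{ij}$ preserving $\mathcal{I}_{X/Y}$, Schlessinger's conditions for the formal semiuniversal deformation, the $2$-cocycle of lifting discrepancies for the obstruction space, and the derivation-theoretic construction of the exact sequence, where you rightly isolate the nonsingularity of $Y$ as the input for surjectivity onto $T_X$. Your sketch is sound. One point deserves to be made explicit: as literally printed, the first term $\mathcal{I}_{X/Y}$ of (\ref{ixyty<x>}) cannot be right (off $X$ the sequence would read $0 \rightarrow \mathcal{O}_Y \rightarrow T_Y \rightarrow 0 \rightarrow 0$); the kernel --- which you correctly characterize as the derivations carrying $\mathcal{O}_Y$ into $\mathcal{I}_{X/Y}$ --- is $T_Y \otimes \mathcal{I}_{X/Y}$, and this is indeed how the paper itself uses the sequence later, reading the first term as $T_S(-C)$ and $T_X(-2F)$ in the proofs of Lemma \ref{h1ts-ck3} and Lemma \ref{h2tx-fk3}. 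With that gloss, which your proposal implicitly adopts, the argument is correct and coincides with the one in the cited reference.
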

Deforming the embedding $j$ coincides with deforming the pair $(Y,X)$, thus the functor $\text{Def}^{\prime}_j$ will be also indicated with $\text{Def}^{\prime}_{(Y,X)}$.
\begin{defn}[\cite{S}, Definition 3.4.22]
If $j:X \hookrightarrow Y$ is a closed embedding of projective schemes, then $X$ is called stable in $Y$ if the (forgetful) morphism of functors of Artin rings
$$\Phi_Y:\emph{Def}'j \rightarrow \emph{Def}'Y$$
is smooth (see \cite{S}, Definition 2.4.4).
\end{defn}
Our definition of stability implies that every infinitesimal locally trivial deformation of $Y$ is induced by a locally trivial deformation of $j$.
\begin{prop}[\cite{S}, Proposition 3.4.23]
Let $j:X \hookrightarrow Y$ be a closed embedding of projective schemes with $Y$ nonsingular and let $N'_{X/Y}$ be the equisingular normal sheaf of $X$ in $Y$. If $H^{1}(X,N'_{X/Y})=(0)$, then $X$ is stable in $Y$.
\end{prop}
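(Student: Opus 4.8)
The plan is to establish smoothness of the forgetful morphism $\Phi_Y:\text{Def}'_j \to \text{Def}'_Y$ by means of the standard criterion for morphisms of functors of Artin rings (\cite{S}): such a morphism is smooth as soon as it carries compatible obstruction theories for which the induced map on tangent spaces is surjective and the induced map on obstruction spaces is injective. So first I would pin down the tangent and obstruction spaces of the two functors and describe the maps induced by $\Phi_Y$. By Theorem \ref{h1ty<x>}, the functor $\text{Def}'_j = \text{Def}'_{(Y,X)}$ has tangent space $H^1(T_Y\langle X\rangle)$ and admits $H^2(T_Y\langle X\rangle)$ as an obstruction space; on the other hand, since $Y$ is nonsingular, the Kodaira--Spencer correspondence (\cite{S}) identifies the tangent space of $\text{Def}'_Y = \text{Def}_Y$ with $H^1(T_Y)$, while $H^2(T_Y)$ is an obstruction space.

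The next step is to check that $\Phi_Y$ is compatible with these obstruction theories, the maps on tangent and on obstruction spaces being exactly those induced in cohomology by the inclusion of sheaves $T_Y\langle X\rangle \hookrightarrow T_Y$ on $Y$. Intuitively, a vector field tangent to $X$ is in particular a vector field on $Y$, and at the level of \v{C}ech $1$-cocycles this inclusion is precisely the forgetful operation that sends the deformation data of the pair $(Y,X)$ to the deformation data of $Y$ alone; the same comparison on $2$-cocycles matches the two obstruction assignments.

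The heart of the argument is then the defining short exact sequence of the equisingular normal sheaf,
$$0 \to T_Y\langle X\rangle \to T_Y \to N'_{X/Y} \to 0,$$
whose cokernel is supported on $X$, so that $H^i(Y,N'_{X/Y}) = H^i(X,N'_{X/Y})$ for all $i$. Passing to the associated long exact cohomology sequence yields
$$H^1(T_Y\langle X\rangle) \to H^1(T_Y) \to H^1(X,N'_{X/Y}) \to H^2(T_Y\langle X\rangle) \to H^2(T_Y).$$
Under the hypothesis $H^1(X,N'_{X/Y})=(0)$, the first map is surjective and the map $H^2(T_Y\langle X\rangle) \to H^2(T_Y)$ is injective, which are exactly the two conditions demanded by the smoothness criterion. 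I would then conclude that $\Phi_Y$ is smooth, i.e. that $X$ is stable in $Y$.

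The step I expect to be the genuine obstacle is the compatibility asserted in the second paragraph: one must verify that the forgetful morphism of deformation functors really induces on tangent and on obstruction spaces the maps coming from $T_Y\langle X\rangle \hookrightarrow T_Y$, and that the two obstruction theories match up, so that a single exact sequence governs both levels simultaneously. This forces one to unwind the explicit cocycle descriptions of the Kodaira--Spencer classes and of the obstructions to lifting infinitesimal deformations; once that identification is in place, the cohomological bookkeeping of the first and third paragraphs is purely formal.
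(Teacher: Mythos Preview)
The paper does not prove this proposition at all; it is quoted verbatim as \cite{S}, Proposition 3.4.23, and the reader is referred to Sernesi's book for the argument. So there is no ``paper's own proof'' to compare against here.

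That said, your outline is the standard one and is essentially the proof given in \cite{S}. The exact sequence $0 \to T_Y\langle X\rangle \to T_Y \to N'_{X/Y} \to 0$ is precisely how the equisingular normal sheaf is defined in \cite{S}, Section 3.4.4, and the smoothness criterion you invoke (surjective on tangent spaces, injective on obstruction spaces, with compatible obstruction theories) is \cite{S}, Proposition 2.3.6. Your honest flagging of the compatibility of obstruction theories as the substantive point is exactly right: in \cite{S} this is handled by the explicit \v{C}ech cocycle description of both functors, and once it is in place the rest is, as you say, formal bookkeeping with the long exact sequence. There is no gap in your plan.
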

\begin{es}[\cite{S}, Example 3.4.24 (i)]
\label{divisorieccezionalistabili}
Let $Y$ be a projective nonsingular variety, $\gamma \subset Y$ a nonsingular closed subvariety and $\epsilon:X \rightarrow Y$ the blow-up of $Y$ with centre $\gamma$. Let $E=\epsilon^{-1}(\gamma) \subset X$ be the exceptional divisor. Then $h^{i}(E,N_{E/X})=0$ for all $i$. This means that $E$ is a stable subvariety of $X$. This is remarkable because $\gamma$ has not been required to be stable in $Y$.
\end{es}
\end{subsection}
\begin{subsection}{Rational curves on varieties}
\label{rcav}
The references for this subsection are the books of Debarre (\cite{DE}) and Kollár (\cite{KO}).\\
Let $X$ be a quasi-projective variety and $Y$ be a projective variety defined over a field $k$. There exists a locally noetherian $k$-scheme $\text{Mor}(Y,X)$ parameterizing morphisms from $Y$ to $X$ (see \cite{GR}, 4.c). This scheme will have in general countably many components.
\begin{prop}[\cite{DE}, Proposition 2.4]
\label{tmoryxf}
Notation as above, let $f:Y \rightarrow X$ be a morphism, and let $[f]$ be the corresponding point in $\emph{Mor}(Y,X)$. One has
\begin{equation}
T_{[f]}\emph{Mor}(Y,X) \cong H^{0}(Y,Hom(f^{*}\Omega^1_X,\mathcal{O}_Y)).
\end{equation}
In particular, if $X$ is smooth along the image of $f$ one has
$$T_{[f]}\emph{Mor}(Y,X) \cong H^{0}(Y,f^{*}T_X).$$
\end{prop}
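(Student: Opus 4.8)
The plan is to compute the tangent space $T_{[f]}\mathrm{Mor}(Y,X)$ directly from its functorial description as the set of $k[\epsilon]$-points of $\mathrm{Mor}(Y,X)$ lying over $[f]$. By the universal property of the scheme $\mathrm{Mor}(Y,X)$ recalled above, such a point corresponds to a morphism $F: Y \times_k \mathrm{Spec}(k[\epsilon]) \to X$ whose restriction to the closed fibre $Y \times_k \mathrm{Spec}(k)$ is $f$. Thus a tangent vector is the same as a first order deformation of $f$ with source and target held fixed, and the whole proof amounts to parameterising these deformations.

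First I would translate the data of $F$ into a homomorphism of sheaves of $k$-algebras. Since $X$ and $\mathrm{Spec}(k[\epsilon])$ are both taken over $k$ and the underlying topological space of $Y \times_k \mathrm{Spec}(k[\epsilon])$ is that of $Y$, the morphism $F$ is given by a map $F^{\#}: \mathcal{O}_X \to f_*\bigl(\mathcal{O}_Y[\epsilon]\bigr) = f_*\mathcal{O}_Y \oplus \epsilon\, f_*\mathcal{O}_Y$ of sheaves of $k$-algebras on $X$, where $f_*\mathcal{O}_Y$ carries its $\mathcal{O}_X$-module structure via $f^{\#}$. Writing $F^{\#} = f^{\#} + \epsilon D$, the condition that $F^{\#}$ be multiplicative and reduce to $f^{\#}$ modulo $\epsilon$ becomes exactly the Leibniz rule $D(ab) = f^{\#}(a)\,D(b) + f^{\#}(b)\,D(a)$. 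Hence tangent vectors are in bijection with $k$-derivations $D: \mathcal{O}_X \to f_*\mathcal{O}_Y$, that is, with the group $\mathrm{Der}_k(\mathcal{O}_X, f_*\mathcal{O}_Y) \cong \mathrm{Hom}_{\mathcal{O}_X}(\Omega^1_X, f_*\mathcal{O}_Y)$ by the universal property of the sheaf of differentials $\Omega^1_X$.

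The next step is to rewrite this last group on $Y$. Applying the $(f^*, f_*)$-adjunction for quasi-coherent sheaves to $\Omega^1_X$ and $\mathcal{O}_Y$ gives the chain of natural isomorphisms
$$\mathrm{Hom}_{\mathcal{O}_X}(\Omega^1_X, f_*\mathcal{O}_Y) \cong \mathrm{Hom}_{\mathcal{O}_Y}(f^*\Omega^1_X, \mathcal{O}_Y) \cong H^{0}\bigl(Y, \mathcal{H}om(f^*\Omega^1_X, \mathcal{O}_Y)\bigr),$$
which is the first assertion. For the refinement, when $X$ is smooth along $f(Y)$ the pullback $f^*\Omega^1_X$ is locally free, so its dual sheaf $\mathcal{H}om(f^*\Omega^1_X, \mathcal{O}_Y)$ is canonically $f^*(\Omega^1_X)^{\vee} = f^*T_X$; taking global sections yields $T_{[f]}\mathrm{Mor}(Y,X) \cong H^0(Y, f^*T_X)$.

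The computation itself is formal once the set-up is in place; the step that most needs care is the very first one, namely matching the abstract tangent space of the representing scheme with honest morphisms $F$ out of the trivial thickening $Y \times \mathrm{Spec}(k[\epsilon])$, i.e. making genuine use of the universal property of $\mathrm{Mor}(Y,X)$ and of the fact that the target $X$ is \emph{not} being deformed. Once one is confident that these $k[\epsilon]$-points are precisely the derivations above, the identification with derivations, the passage to $\mathrm{Hom}_{\mathcal{O}_X}(\Omega^1_X, f_*\mathcal{O}_Y)$, and the adjunction are all routine and patch together globally without any cohomological obstruction, since we are describing a single global morphism $F$ by its sheaf-level data rather than gluing distinct local deformations.
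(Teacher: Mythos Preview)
Your argument is correct and is essentially the standard proof (and indeed the one Debarre gives in the cited reference). Note, however, that the paper itself does not supply a proof of this proposition: it is quoted verbatim from \cite{DE}, Proposition 2.4, as background material, so there is no in-paper proof to compare against. Your write-up would serve perfectly well as a self-contained justification; the only minor point worth tightening is the claim that ``the underlying topological space of $Y \times_k \mathrm{Spec}(k[\epsilon])$ is that of $Y$'' together with the identification of $F$ with a map of sheaves on $X$ --- strictly speaking one should say that $F$ factors set-theoretically through $f$ (since $\epsilon$ is nilpotent), so $F^{\#}$ is a map of sheaves of $k$-algebras $f^{-1}\mathcal{O}_X \to \mathcal{O}_Y[\epsilon]$ on $Y$, and then pass to the adjoint. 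This is cosmetic and does not affect the argument.
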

\begin{teo}[\cite{DE}, Theorem 2.6]
\label{Tmoryx}
Notation as above, let $f:Y \rightarrow X$ be a morphism such that $X$ is smooth along the image of $f$. Locally around $[f]$, the scheme $\emph{Mor}(Y,X)$ can be defined by $h^{1}(Y,f^{*}T_X)$ equations in a smooth scheme of dimension $h^{0}(Y,f^{*}T_X)$. In particular:
\begin{itemize}
\item[(i)] any irreducible component of $\emph{Mor}(Y,X)$ through $[f]$ has dimension at least
$$h^{0}(Y,f^{*}T_X)-h^{1}(Y,f^{*}T_X);$$
\item[(ii)] if $h^1(Y,f^{*}T_X)=0$, then $\emph{Mor}(Y,X)$ is smooth at $[f]$.
\end{itemize}
\end{teo}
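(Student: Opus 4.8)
The plan is to deduce the theorem from the general principle that the local structure of a parameter scheme is governed by its tangent and obstruction spaces. By Proposition \ref{tmoryxf} the tangent space $T_{[f]}\text{Mor}(Y,X)$ is isomorphic to $H^{0}(Y,f^{*}T_X)$, so it suffices to exhibit $H^{1}(Y,f^{*}T_X)$ as an obstruction space and then invoke the ``$T^{1}/T^{2}$ formalism'': a scheme whose deformation theory has tangent space of dimension $h^{0}=h^{0}(Y,f^{*}T_X)$ and obstruction space of dimension $h^{1}=h^{1}(Y,f^{*}T_X)$ can be cut out, locally around the marked point, as the zero locus of $h^{1}$ functions inside a smooth scheme of dimension $h^{0}$. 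Once this is in place, (i) follows from Krull's principal ideal theorem, since imposing $h^{1}$ equations drops the dimension of any component by at most $h^{1}$, and (ii) is immediate because no equations are needed when $h^{1}=0$.

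First I would identify the obstruction space concretely. The cleanest route is to realise $\text{Mor}(Y,X)$ as an open subscheme of the Hilbert scheme $\text{Hilb}(Y\times X)$ via the graph morphism $f\mapsto[\Gamma_f]$, where $\Gamma_f\subset Y\times X$ is the image of $(\mathrm{id},f)\colon Y\to Y\times X$; the locus of subschemes projecting isomorphically onto $Y$ is open, and on it the correspondence is an isomorphism. Since $\Gamma_f$ is a section of the first projection, its normal sheaf is the restriction of the relative tangent sheaf $T_{(Y\times X)/Y}=\mathrm{pr}_X^{*}T_X$ to $\Gamma_f\cong Y$, that is $N_{\Gamma_f/(Y\times X)}\cong f^{*}T_X$. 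Standard Hilbert-scheme deformation theory then gives the tangent space $H^{0}(\Gamma_f,N_{\Gamma_f})\cong H^{0}(Y,f^{*}T_X)$, recovering Proposition \ref{tmoryxf}, and the obstruction space $H^{1}(\Gamma_f,N_{\Gamma_f})\cong H^{1}(Y,f^{*}T_X)$.

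Alternatively, and more in keeping with the deformation-theoretic language of the previous subsection, I would construct the obstruction class by hand. Given a deformation $f_n$ of $f$ over $A_n=k[t]/(t^{n+1})$, cover $Y$ by affine opens $U_\alpha$; since $X$ is smooth along the image, each restriction $f_n|_{U_\alpha}$ lifts to a morphism over $A_{n+1}=k[t]/(t^{n+2})$, and on the overlaps the difference of two such lifts is a section of $f^{*}T_X$. These differences assemble into a \v{C}ech $1$-cocycle whose class $o(f_n)\in H^{1}(Y,f^{*}T_X)$ vanishes precisely when a global lift exists; this is the obstruction map. The tangent space $H^{0}(Y,f^{*}T_X)$ and the obstruction space $H^{1}(Y,f^{*}T_X)$ together furnish a complete obstruction theory for $\text{Mor}(Y,X)$ at $[f]$.

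The last step is to convert the abstract obstruction theory into the stated local model. I would take a smooth scheme $S$ of dimension $h^{0}$, for instance a formally smooth hull realising the full tangent space $H^{0}(Y,f^{*}T_X)$, together with the obstruction map valued in $H^{1}(Y,f^{*}T_X)\cong k^{h^{1}}$; its $h^{1}$ coordinate functions cut out $\text{Mor}(Y,X)$ inside $S$ near $[f]$. The main obstacle is precisely this passage: one must verify that the obstruction theory is \emph{complete}, so that the number of genuinely needed equations is at most $h^{1}$ and that they may be taken as the components of a single section of the trivial bundle of rank $h^{1}$, rather than merely bounding the relations formally. This is where the versality of the chosen $S$ and the functoriality of the obstruction class are used; with these in hand, conclusions (i) and (ii) follow from the dimension count and the smoothness criterion recorded above.
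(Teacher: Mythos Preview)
The paper does not contain a proof of this statement: it is quoted verbatim as \cite{DE}, Theorem 2.6, and no argument is supplied. So there is nothing in the paper to compare your proposal against.

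For what it is worth, your sketch is the standard route and is essentially the argument in Debarre's book: one identifies $\text{Mor}(Y,X)$ with an open subscheme of $\text{Hilb}(Y\times X)$ via the graph, computes the normal bundle of $\Gamma_f$ as $f^{*}T_X$, and then invokes the tangent/obstruction description of the Hilbert scheme to obtain the local presentation by $h^{1}$ equations in a smooth scheme of dimension $h^{0}$. Your alternative \v{C}ech description of the obstruction class is also correct and equivalent. The only caveat is the one you flag yourself: the passage from ``obstructions lie in $H^{1}$'' to ``locally cut out by $h^{1}$ equations'' requires either the Hilbert-scheme machinery or an appeal to a formal smoothness/hull argument, and is not a tautology; but this is a standard step, not a gap.
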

Let now $X$ be a complex
projective variety of dimension $n$, let $f:\mathbb{P}^1 \rightarrow X$ be a morphism such that $X$ is smooth along the image of $f$ and let  $f^{*}T_X=\mathcal{O}_{\mathbb{P}^1}(a_1) \oplus ... \oplus \mathcal{O}_{\mathbb{P}^1}(a_n)$, $a_1 \geq a_2 \geq ... \geq a_n$. The numbers $a_i=a_i(f)$ are important invariants of $f$.\\
We call $\deg_{(-K_X)}(f)=\deg f^{*}T_X=\deg f^{*}(-K_X)$ the \emph{anticanonical degree of $f$}. One has $\deg_{(-K_X)}(f)=\sum_{i=1}^{n}a_i$.
\begin{prop}[\cite{KO}, II 3.12]
\label{aiindipendentidallacirr}
If  $V \subset \emph{Mor}(\mathbb{P}^1,X)$ is an irreducible component, then $\deg_{(-K_X)}(f)$ is independent of $[f] \in V$ and $a_1(f),a_2(f),...,a_n(f)$ are independent of $[f] \in V$ for $f$ general.
\end{prop}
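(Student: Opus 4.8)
The plan is to treat the two assertions separately, since the constancy of the anticanonical degree is a flatness statement valid on all of $V$, whereas the constancy of the splitting type holds only generically and requires a semicontinuity argument. Throughout I would work with the universal (evaluation) morphism $\mathrm{ev}:\mathrm{Mor}(\mathbb{P}^1,X)\times\mathbb{P}^1\rightarrow X$, whose restriction to $\{[f]\}\times\mathbb{P}^1$ is $f$. Restricting to the component $V$ and writing $p:V\times\mathbb{P}^1\rightarrow V$, $q:V\times\mathbb{P}^1\rightarrow\mathbb{P}^1$ for the projections, set $\mathcal{E}:=\mathrm{ev}^{*}T_X$; this is locally free of rank $n$ and flat over $V$, with fibre $\mathcal{E}_{[f]}\cong f^{*}T_X$.

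For the degree I would pass to $\mathcal{L}:=\det\mathcal{E}=\mathrm{ev}^{*}(-K_X)$, a line bundle on $V\times\mathbb{P}^1$ whose restriction to $\{[f]\}\times\mathbb{P}^1$ is $f^{*}(-K_X)$, of degree $\deg_{(-K_X)}(f)$. Since $V$ is irreducible, hence connected, and the Euler characteristic of the fibres of a flat family is locally constant, $\chi(\mathbb{P}^1,\mathcal{L}_{[f]})$ is constant on $V$; by Riemann--Roch $\chi(\mathbb{P}^1,\mathcal{L}_{[f]})=\deg_{(-K_X)}(f)+1$, so the anticanonical degree is constant on all of $V$. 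This settles the first assertion.

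For the splitting type I would invoke the semicontinuity theorem. For each integer $k$ the sheaf $\mathcal{E}\otimes q^{*}\mathcal{O}_{\mathbb{P}^1}(k)$ is flat over $V$ with fibre $f^{*}T_X(k)$, so $[f]\mapsto h^{1}(\mathbb{P}^1,f^{*}T_X(k))$ is upper semicontinuous and attains its minimal (generic) value $m_k$ on a dense open subset $U_k\subset V$. Writing $f^{*}T_X\cong\bigoplus_{i=1}^{n}\mathcal{O}_{\mathbb{P}^1}(a_i(f))$, one has $h^{1}(f^{*}T_X(k))=\sum_{i}\max(-(a_i(f)+k)-1,\,0)$, and the whole collection of these numbers determines the unordered tuple $(a_1(f),\dots,a_n(f))$: indeed the first difference $h^{0}(f^{*}T_X(k))-h^{0}(f^{*}T_X(k-1))$ equals $\#\{i:a_i(f)\geq -k\}$, whose jumps recover the multiplicities of the $a_i$. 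Thus it suffices to show that $U:=\bigcap_{k\in\mathbb{Z}}U_k$ is dense open, for then the splitting type is constant on $U$.

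The crux — and the only genuinely delicate point — is to replace this infinite intersection by a finite one, i.e. to show $U_k=V$ for all but finitely many $k$. Here I would use that an irreducible component $V$ is of finite type, hence quasi-compact, so $h^{0}(\mathbb{P}^1,f^{*}T_X)$ is bounded above on $V$, say by $C+1$ with $C\geq 0$. Since $a_1(f)\leq h^{0}(f^{*}T_X)-1\leq C$ (this holds trivially when $a_1(f)<0$), and the degree $d=\sum_i a_i(f)$ is now known to be constant, every $a_i(f)$ lies in the fixed interval $[\,d-(n-1)C,\ C\,]$. Consequently $h^{1}(f^{*}T_X(k))=0$ for all $[f]\in V$ once $k\geq -1-(d-(n-1)C)$, and equals the type-independent value $-\chi=-(d+n(k+1))$ once $k\leq -1-C$; in both ranges $U_k=V$. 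Only the finitely many intermediate $k$ can give $U_k\subsetneq V$, so $U$ is a finite intersection of dense opens, hence dense open, and the splitting type is constant on it. I expect the bookkeeping of these two uniform bounds (the quasi-compactness input that bounds $a_1$, and thereby all the $a_i$, uniformly over $V$) to be the main obstacle; the remainder is the standard semicontinuity formalism.
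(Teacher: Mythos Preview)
The paper does not give its own proof of this proposition; it is stated as a citation from Koll\'ar (\cite{KO}, II~3.12) and used as a black box in the background section. Your argument is correct and is essentially the standard proof one finds there: constancy of the anticanonical degree via local constancy of Euler characteristics in a flat family, and generic constancy of the splitting type via upper semicontinuity of the functions $[f]\mapsto h^{1}(\mathbb{P}^1,f^{*}T_X(k))$, with the quasi-compactness of the finite-type component $V$ supplying a uniform bound on the $a_i(f)$ so that only finitely many twists $k$ need be intersected. There is nothing to compare against in the paper itself.
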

Let $V \subset \text{Mor}(\mathbb{P}^1,X)$ be an irreducible component. Define $\text{Locus}(V)$ as the set of points $x \in X$ such that there is a morphism $f$ with $x \in f(\mathbb{P}^1)$ and $[f] \in V$. $\text{Locus}(V)$ turns out to be a (not necessarily closed) subvariety of $X$ (see \cite{KO}, II 2.3).\\
Theorem \ref{Tmoryx} is a key ingredient to prove the following
\begin{prop}[\cite{KO}, IV Theorem 2.7]
\label{dimlocusvaigeq0}
Let $X$ be a complex projective variety and $V \subset \emph{Mor}(\mathbb{P}^1,X)$ an irreducible component such that for the general $[f] \in V$ the image of $f$ is contained in the smooth locus of $X$. Let $f^{*}T_{X} \cong \mathcal{O}_{\mathbb{P}^1}(a_1) \oplus ... \oplus \mathcal{O}_{\mathbb{P}^1}(a_n)$ and suppose that $a_i \geq -1$ for all $i$. Then
\begin{equation}
\dim \emph{ Locus}(V) = \#\left\{i | a_i \geq 0\right\}.
\end{equation}
\end{prop}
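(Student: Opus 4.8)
The plan is to present $\text{Locus}(V)$ as the image of the evaluation morphism $\text{ev}\colon V \times \mathbb{P}^1 \to X$, $([f],t) \mapsto f(t)$, and to compute the generic rank of its differential. By the very definition of $\text{Locus}(V)$ one has $\text{Locus}(V) = \text{Im}(\text{ev})$, so, working over $\mathbb{C}$, it is enough to determine $\text{rank}\, d\,\text{ev}$ at a general point, because $\dim \text{Locus}(V)$ equals the generic rank of $d\,\text{ev}$. Throughout I would restrict to the dense open subset of $V$ on which the splitting type $(a_1,\dots,a_n)$ is the generic one furnished by Proposition \ref{aiindipendentidallacirr}.

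First I would pin down the source of the differential. The hypothesis $a_i \geq -1$ is used precisely here: it gives $h^1(\mathbb{P}^1, f^*T_X) = \sum_i h^1(\mathcal{O}_{\mathbb{P}^1}(a_i)) = 0$, so by Theorem \ref{Tmoryx}(ii) the scheme $\text{Mor}(\mathbb{P}^1,X)$ is smooth at a general $[f] \in V$, and by Proposition \ref{tmoryxf} its tangent space there is $H^0(\mathbb{P}^1, f^*T_X)$. Consequently $T_{([f],t_0)}(V \times \mathbb{P}^1) \cong H^0(f^*T_X) \oplus T_{t_0}\mathbb{P}^1$ at a general point $([f],t_0)$, and $d\,\text{ev}$ sends $(s,v)$ to $s(t_0) + df_{t_0}(v) \in (f^*T_X)|_{t_0} \cong T_{f(t_0)}X$.

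The heart of the argument is to show that $\text{Im}\, d\,\text{ev}$ coincides with the image of the evaluation-of-sections map $H^0(f^*T_X) \to (f^*T_X)|_{t_0}$. Writing $f^*T_X \cong \bigoplus_i \mathcal{O}_{\mathbb{P}^1}(a_i)$, this evaluation splits as a direct sum and is surjective onto exactly the summands with $a_i \geq 0$ (a section of $\mathcal{O}_{\mathbb{P}^1}(a_i)$ can be made nonzero at $t_0$ iff $a_i \geq 0$, while $\mathcal{O}_{\mathbb{P}^1}(-1)$ has no global sections), so its image has dimension $\#\{i \mid a_i \geq 0\}$. It then remains to check that the contribution $df_{t_0}(T_{t_0}\mathbb{P}^1)$ of the $\mathbb{P}^1$-factor is already contained in this image: regarding $df$ as a global section of $f^*T_X \otimes \Omega^1_{\mathbb{P}^1} \cong \bigoplus_i \mathcal{O}_{\mathbb{P}^1}(a_i-2)$, its $i$-th component vanishes identically whenever $a_i < 2$, so the tangent direction to the curve at $f(t_0)$ lies in the span of the summands with $a_i \geq 2$, all of which appear in the image of the evaluation map. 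Hence $\text{rank}\, d\,\text{ev} = \#\{i \mid a_i \geq 0\}$ generically, which gives the claimed value of $\dim \text{Locus}(V)$.

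I expect this last containment to be the only real obstacle: one must rule out that varying the parameter $t$ produces a genuinely new tangent direction at $f(t_0)$, and the clean way to do so is the observation that $df$ only sees the summands of anticanonical degree at least $2$. Two minor technical points must also be respected: the rank has to be computed where the splitting type is generic, which is exactly the content of Proposition \ref{aiindipendentidallacirr}, and the passage from the pointwise rank of $d\,\text{ev}$ to $\dim \text{Im}(\text{ev})$ relies on generic smoothness, legitimate since we work over $\mathbb{C}$.
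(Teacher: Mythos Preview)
The paper does not prove this proposition at all: it is stated as background material with a citation to \cite{KO}, IV Theorem~2.7, and no argument is given. Your proposal is correct and is essentially the standard proof one finds in Koll\'ar (or in \cite{DE}): realize $\text{Locus}(V)$ as the image of the evaluation map, use $h^1(f^*T_X)=0$ to get smoothness of $V$ at a general $[f]$, and compute the rank of $d\,\text{ev}$ by splitting $f^*T_X$ and noting that the contribution of $T_{\mathbb{P}^1}$ via $df$ lands in the summands with $a_i\geq 2$, hence is already contained in the image of the section-evaluation map.
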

\end{subsection}
\begin{subsection}{Fibrations}
From now on we will assume $k = \mathbb{C}$.\\
By a \emph{fibration} we mean a surjective morphism with connected fibres $f:X \rightarrow B$ from a projective nonsingular surface to a projective nonsingular connected curve.\\
A fibration is said to be
\begin{itemize}
\item \emph{relatively minimal} if there are no (-1)-curves contained in any of its fibres.
\item \emph{semistable} if it is relatively minimal and every fibre has at most nodes as singularities.
\item \emph{stable} if every fibre has at most nodes as singularities and finite automorphisms group.
\item \emph{non-isotrivial} if all its nonsingular fibres are mutually isomorphic.
\end{itemize}
We will denote by $g$ the genus of a general fibre and by $b$ the genus of the base curve $B$. We will always assume $g \geq 2$.\\
Sheaves of differentials will be denoted with the symbol $\Omega$ and dualizing sheaves with the symbol $\omega$. Let $f:X \rightarrow B$ be any fibration. Since both $X$ and $B$ are nonsingular one has $\wedge^2 \Omega^1_X=\omega_X$ and $\Omega^1_B=\omega_B$.\\
Since $f$ is a relative complete intersection morphism we have the equality (see \cite{K}, Corollary 24)
\begin{equation}
\label{relativecanonicalsheaf}
\omega_{X/B}=\omega_{X} \otimes f^{*}{\omega_B}^{-1}.
\end{equation}
There exists an exact sequence
\begin{equation}
\label{f*omegabomega1x}
0 \rightarrow f^{*}\omega_{B} \rightarrow \Omega^1_{X} \rightarrow \Omega^1_{X/B} \rightarrow 0
\end{equation}
(which is exact on the left because the first homomorphism is injective on a dense open set and $f^{*}\omega_B$ is locally free).\\
Define $T_{X/B} \doteq  Hom(\Omega^1_{X/B},\mathcal{O}_X)$. The following is a classical result of Arakelov in the semistable case, and it is due to Serrano in the general case (recall that we are assuming $g \geq 2$):
\begin{teo}
\label{as}
If $f$ is a non-isotrivial fibration, then $h^0(X, T_{X/B})=h^{0}(X,T_X)=0$.
If moreover $f$ is relatively minimal, then one also has that $h^1(X, T_{X/B})=0$.
\end{teo}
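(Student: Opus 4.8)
The plan is to establish the three vanishings separately, exploiting the sequence (\ref{f*omegabomega1x}), its $\mathcal{H}om(-,\mathcal{O}_X)$-dual, and the remark that $T_{X/B}=\mathcal{H}om(\Omega^1_{X/B},\mathcal{O}_X)$, being reflexive of generic rank one on the nonsingular surface $X$, is a line bundle. First I would record that along a general smooth fibre $F=f^{-1}(b)$ one has $T_{X/B}|_F\cong\omega_F^{-1}=T_F$: indeed by (\ref{relativecanonicalsheaf}) and adjunction $\omega_{X/B}|_F=\omega_F$ (the normal bundle $N_{F/X}$ is trivial, being pulled back from a point), and $T_{X/B}$ agrees with $\omega_{X/B}^{-1}$ over the smooth locus of $f$. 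Since $\deg\omega_F^{-1}=2-2g<0$ for $g\geq 2$, we get $H^0(F,T_{X/B}|_F)=0$; hence a global section of the line bundle $T_{X/B}$ vanishes on the dense open set $f^{-1}(B\setminus\Delta)$ and so vanishes identically. This proves $h^0(X,T_{X/B})=0$, and needs only $g\geq 2$.

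For $h^0(X,T_X)=0$ I would dualise (\ref{f*omegabomega1x}) to get the left-exact sequence $0\to T_{X/B}\to T_X\to f^{*}T_B$. Passing to global sections and using the previous step yields an inclusion $H^0(X,T_X)\hookrightarrow H^0(X,f^{*}T_B)\cong H^0(B,T_B)$, the last isomorphism by the projection formula together with $f_{*}\mathcal{O}_X=\mathcal{O}_B$. If $g(B)\geq 2$ the right-hand side is already zero. Otherwise a nonzero $v\in H^0(X,T_X)$ maps, by injectivity, to a nonzero vector field $\bar w$ on $B$; since $X$ is compact, $v$ integrates to a one-parameter group $\phi_t\in\mathrm{Aut}^{0}(X)$ that is $f$-related to the flow $\psi_t$ of $\bar w$, i.e. $f\circ\phi_t=\psi_t\circ f$. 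As $\bar w\not\equiv 0$, the $\phi_t$ would carry the fibre over a general $b$ isomorphically onto the fibres over the nearby points $\psi_t(b)\neq b$, forcing $f$ to be isotrivial and contradicting the hypothesis. Hence $H^0(X,T_X)=0$.

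The substantial point is $h^1(X,T_{X/B})=0$ under relative minimality, and here I would first reduce to a positivity statement on $B$. From $f_{*}\mathcal{O}_X=\mathcal{O}_B$ and the computation above, $f_{*}T_{X/B}$ is a torsion sheaf on the curve $B$ with $H^0(B,f_{*}T_{X/B})=H^0(X,T_{X/B})=0$, so $f_{*}T_{X/B}=0$; the Leray spectral sequence then gives $H^1(X,T_{X/B})\cong H^0(B,R^1f_{*}T_{X/B})$. Because $f$ is a relative complete intersection, it is Gorenstein with relative dualizing sheaf $\omega_{X/B}$, and relative duality yields $R^1f_{*}T_{X/B}\cong\bigl(f_{*}(T_{X/B}^{-1}\otimes\omega_{X/B})\bigr)^{\vee}$. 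Over the locus of reduced fibres $T_{X/B}^{-1}=\omega_{X/B}$, so this sheaf is $(f_{*}\omega_{X/B}^{\otimes 2})^{\vee}$, and in general one only twists $\omega_{X/B}^{\otimes 2}$ by an effective vertical divisor supported on the multiple fibres. Thus it suffices to prove $\mathrm{Hom}_{\mathcal{O}_B}(f_{*}\omega_{X/B}^{\otimes 2},\mathcal{O}_B)=0$.

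This final vanishing is where both relative minimality and non-isotriviality are used, and I expect it to be the main obstacle. Relative minimality makes $\omega_{X/B}$ nef, and by the semipositivity theorem for direct images (Fujita, Viehweg) the bundle $f_{*}\omega_{X/B}^{\otimes 2}$ is nef on $B$; a nonzero homomorphism to $\mathcal{O}_B$ would then produce a quotient line bundle of degree $\leq 0$, which nefness forces to be a trivial quotient of degree $0$. The hard part will be to rule such a trivial quotient out: for a non-isotrivial family of curves of genus $g\geq 2$ the direct image is strictly positive in the relevant sense (Arakelov's inequality $\deg f_{*}\omega_{X/B}>0$, refined by the fact that the flat unitary part of the direct image detects precisely the isotrivial locus), so no flat quotient can occur. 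This contradiction gives $\mathrm{Hom}_{\mathcal{O}_B}(f_{*}\omega_{X/B}^{\otimes 2},\mathcal{O}_B)=0$ and hence $h^1(X,T_{X/B})=0$. The contribution of the multiple fibres to the twisting divisor is only a secondary technicality, since it replaces $\omega_{X/B}^{\otimes 2}$ by a subsheaf and therefore cannot create new homomorphisms to $\mathcal{O}_B$.
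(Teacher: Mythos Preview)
The paper's own proof is almost entirely by citation: it quotes Serrano's characterisation ``$f$ non-isotrivial $\Longleftrightarrow f_*T_X=0$'' to get $h^0(T_X)=h^0(T_{X/B})=0$, and for $h^1(T_{X/B})=0$ it simply refers to Arakelov and Szpiro in the semistable case and to Serrano, Corollary 3.6, in general. Your proposal is thus far more detailed than what the paper offers. Your arguments for the two $h^0$-vanishings are correct: the line-bundle observation and the fibrewise degree count give $h^0(T_{X/B})=0$ cleanly, and the vector-field integration argument is exactly a reproof of the content of Serrano's Lemma~3.2 that the paper cites.

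There is, however, a genuine error in your handling of multiple fibres in the $h^1$ part. You correctly note that $T_{X/B}^{-1}\otimes\omega_{X/B}\cong\omega_{X/B}^{\otimes 2}(-D')$ with $D'=\sum_i(n_i-1)F_{0,i}$ effective and vertical, so that $f_*\bigl(T_{X/B}^{-1}\otimes\omega_{X/B}\bigr)$ is a full-rank \emph{subsheaf} of $f_*\omega_{X/B}^{\otimes 2}$. But the claim that passing to a subsheaf ``cannot create new homomorphisms to $\mathcal{O}_B$'' is false: $\mathrm{Hom}(-,\mathcal{O}_B)$ is contravariant, so for locally free sheaves $\mathcal{E}\subset\mathcal{F}$ on a curve one has $\mathcal{F}^\vee\subset\mathcal{E}^\vee$, and the smaller sheaf can perfectly well acquire trivial or negative quotients that the larger one lacks (already $\mathcal{O}_{\mathbb{P}^1}(-1)\subset\mathcal{O}_{\mathbb{P}^1}$ shows this). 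Hence $\mathrm{Hom}(f_*\omega_{X/B}^{\otimes 2},\mathcal{O}_B)=0$ does not by itself yield the vanishing you need. When all fibres are reduced one has $D'=0$ and your outline is essentially the Arakelov--Szpiro argument the paper cites; extending to the general case requires either proving positivity directly for $f_*\bigl(\omega_{X/B}^{\otimes 2}(-D')\bigr)$ or passing through semistable reduction, which is precisely the extra work in Serrano's paper and is not the ``secondary technicality'' you suggest.
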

\begin{proof}
$f$ is non-isotrivial if and only if $f_ {*}{T_X}=0$ (\cite{SE2}, Lemma 3.2). Since $f_{*}T_{X/B} \subset f_{*}T_X$ one also has $f_{*}T_{X/B}=0$, and the first equality immediately follows. For the second equality, see \cite{A} or \cite{SZ} in the semistable case and \cite{SE}, Corollary 3.6, in the general case.
\end{proof}
Let $f:X \rightarrow B$ be a non-isotrivial fibration. Consider the exact sequence (\ref{f*omegabomega1x}) and apply on it the left exact functor $f_{*}Hom$. Since the sheaves $f_{*}Hom(\Omega^{1}_{X/B},\mathcal{O}_X)$ and $f_{*}Hom(\Omega^1_X,\mathcal{O}_X)$ equal $0$ by the proof of Theorem \ref{as}, one obtains the 4-terms exact sequence
$$0 \rightarrow f_{*}Hom(f^{*}\omega_{B},\mathcal{O}_X) \rightarrow Ext^1_f(\Omega^1_{X/B},\mathcal{O}_X) \rightarrow Ext^1_f(\Omega^{1}_X,\mathcal{O}_X) \rightarrow$$
\begin{equation}
\label{successionef*hom}
\rightarrow Ext^1_f(f^{*}\omega_{B},\mathcal{O}_X) \rightarrow 0
\end{equation}
where $Ext^i_f(\cdot,\mathcal{O}_X)$ is the $i$-th derived functor of the left exact functor $f_{*} \circ Hom (\cdot,\mathcal{O}_X)$ and the sheaves $Ext^2_f$ are all $0$ because the fibres of $f$ are 1-dimensional.
Let us compute each of the terms in the sequence.\\
Since $\omega_B$ is invertible, $Hom$ and pull-back commute and so $f_{*}Hom(f^{*}\omega_{B},\mathcal{O}_X)=f_{*}f^{*}(T_{B})=T_{B}$ by the projection formula.\\
By \cite{L}, (3) p. 102 and the projection formula we have $Ext^1_f(\Omega^{1}_X,\mathcal{O}_X) \cong R^1 f_{*}(T_X)$ and $Ext^1_f(f^{*}\omega_{B},\mathcal{O}_X)=R^1f_{*}(f^{*}T_{B})=T_{B} \otimes R^1f_{*}\mathcal{O}_X$, hence sequence (\ref{successionef*hom}) rewrites as
\begin{equation}
\label{tbext1fomega1xb}
0 \rightarrow T_B \xrightarrow{\kappa_f} Ext^1_f(\Omega^1_{X/B},\mathcal{O}_X) \rightarrow R^1 f_{*}T_X \rightarrow T_B \otimes R^1 f_{*}\mathcal{O}_X \rightarrow 0.
\end{equation}
We are interested in the sheaf $Ext^1_f(\Omega^1_{X/B},\mathcal{O}_X)$ because its cohomology describes the deformation theory of $f$ (see Lemma \ref{significatoh0h1ext1f}). This sheaf is not locally free in general, but it decomposes as follows:
\begin{equation}
\label{ext1ftorsionell}
Ext^1_f(\Omega^1_{X/B},\mathcal{O}_X)=\mathcal{E} \oplus \mathcal{T}
\end{equation}
where $\mathcal{E}$ is locally free and $\mathcal{T}$ is a torsion sheaf. By the \emph{rank} of $Ext^1_f(\Omega^1_{X/B},\mathcal{O}_X)$ we mean the rank of $\mathcal{E}$.
\begin{lem}
\label{relativedualityExt1f}
For any fibration $f : X \rightarrow B$ the sheaf $Ext^1_f(\Omega^1_{X/B},\mathcal{O}_X)$ has rank $3g-3$.
Moreover, there is an exact sequence of sheaves on $B$
\begin{equation}
\label{r1f*txbc10}
0 \rightarrow  R^1f_{*}T_{X/B} \xrightarrow{c_{10}} Ext^1_f(\Omega^1_{X/B},\mathcal{O}_X) \xrightarrow{c_{01}} f_{*}Ext^1_{\mathcal{O}_X}(\Omega^1_{X/B},\mathcal{O}_X) \rightarrow 0.
\end{equation}
If $f$ is stable, then there is an isomorphism
\begin{equation}
\label{ext1fhomf*omega1xbomegaxb}
Ext^1_f(\Omega^1_{X/B},\mathcal{O}_X) \cong Hom(f_{*}(\Omega^1_{X/B} \otimes \omega_{X/B}),\mathcal{O}_B)
\end{equation}
so that in particular $Ext^1_f(\Omega^1_{X/B},\mathcal{O}_X)$ is locally free.
\end{lem}
\begin{proof}
Let $b \in B$. If $X(b)$ is smooth, then $\text{Ext}^1_{\mathcal{O}_{X(b)}}(\Omega^1_{X(b)},\mathcal{O}_{X(b)}) \cong H^1(T_{X(b)})$ has dimension $3g-3$.
Then, if $U \subset B$ is the open set over which $f$ is smooth, one has that $Ext^1_f(\Omega^1_{X/B},\mathcal{O}_X)_{|U}$ is locally free of rank $3g-3$.\\
(\ref{r1f*txbc10}) is the sequence associated to the local-to-global spectral sequence for $Ext_f$.\\
If $f$ is stable $\Omega^1_{X/B}$ is torsion-free, hence flat over $B$, and the sheaf $f_{*}(\Omega^1_{X/B} \otimes \omega_{X/B})$ commutes with base change. As a consequence, relative duality (see \cite{K}, Corollary 24) can be applied and (\ref{ext1fhomf*omega1xbomegaxb}) follows.
\end{proof}

\begin{prop}[\cite{S4}, Proposition 1.7]
If the fibration $f$ is non-isotrivial one has
\begin{equation}
\label{chiext1fomega1xb}
\chi(Ext^1_f(\Omega^1_{X/B},\mathcal{O}_X))=11\chi(\mathcal{O}_X)-2K^2_X+2(b-1)(g-1).
\end{equation}
\end{prop}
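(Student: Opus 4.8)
The plan is to exploit additivity of the Euler characteristic along the four-term exact sequence (\ref{tbext1fomega1xb}) and to reduce every term to invariants of $X$ and $B$ via the Leray spectral sequence and Riemann--Roch. Writing that sequence as $0 \to T_B \to Ext^1_f(\Omega^1_{X/B},\mathcal{O}_X) \to R^1 f_* T_X \to T_B \otimes R^1 f_* \mathcal{O}_X \to 0$, additivity of $\chi$ gives
\begin{equation*}
\chi\big(Ext^1_f(\Omega^1_{X/B},\mathcal{O}_X)\big) = \chi(T_B) + \chi(R^1 f_* T_X) - \chi(T_B \otimes R^1 f_* \mathcal{O}_X),
\end{equation*}
so it suffices to compute the three Euler characteristics on the right.

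First I would dispose of the two ``easy'' terms, both carried by locally free sheaves on the curve $B$. By Riemann--Roch on $B$ one has $\chi(T_B) = \deg T_B + (1-b) = 3(1-b)$. The sheaf $R^1 f_* \mathcal{O}_X$ is locally free of rank $g$ (the arithmetic genus is constant in the flat family $f$, so $h^1(\mathcal{O}_{X_t}) = g$ is constant and base change applies); its degree follows from the Leray spectral sequence applied to $\mathcal{O}_X$, using $f_* \mathcal{O}_X = \mathcal{O}_B$, namely $\chi(\mathcal{O}_X) = \chi(\mathcal{O}_B) - \chi(R^1 f_* \mathcal{O}_X)$, which yields $\deg R^1 f_* \mathcal{O}_X = (b-1)(g-1) - \chi(\mathcal{O}_X)$. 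Tensoring with the line bundle $T_B$ and applying Riemann--Roch on $B$ then gives $\chi(T_B \otimes R^1 f_* \mathcal{O}_X) = (b-1)(g-1) - \chi(\mathcal{O}_X) + 3g(1-b)$.

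The more delicate term is $\chi(R^1 f_* T_X)$, and here the main point is that $R^1 f_* T_X$ may carry torsion at the singular fibres, so I would avoid Riemann--Roch on $B$ and argue purely with Euler characteristics. The Leray spectral sequence for $T_X$ reads $\chi(X,T_X) = \chi(B,f_* T_X) - \chi(B, R^1 f_* T_X)$, and since $f$ is non-isotrivial Theorem \ref{as} gives $f_* T_X = 0$; hence $\chi(R^1 f_* T_X) = -\chi(X,T_X)$. It then remains to evaluate $\chi(X,T_X)$ by Hirzebruch--Riemann--Roch on the surface $X$: extracting the degree-two part of $\mathrm{ch}(T_X)\,\mathrm{td}(X)$ gives $\chi(T_X) = \tfrac{7}{6}K_X^2 - \tfrac{5}{6}e(X)$, and eliminating the topological Euler number $e(X)$ through Noether's formula $12\chi(\mathcal{O}_X) = K_X^2 + e(X)$ yields the clean expression $\chi(T_X) = 2K_X^2 - 10\chi(\mathcal{O}_X)$, so that $\chi(R^1 f_* T_X) = 10\chi(\mathcal{O}_X) - 2K_X^2$.

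Finally I would substitute the three values into the additivity formula and simplify: the terms linear in $\chi(\mathcal{O}_X)$ combine to $11\chi(\mathcal{O}_X)$, the $K_X^2$ contribution is $-2K_X^2$, and the remaining constants reorganize, via $3(1-b)(1-g) - (b-1)(g-1) = 2(b-1)(g-1)$, into the asserted $2(b-1)(g-1)$. I expect the only genuine obstacle to be the bookkeeping around $R^1 f_* T_X$: one must resist treating it as locally free and instead rely on the Leray identity, which needs nothing beyond the vanishing $f_* T_X = 0$ of Theorem \ref{as}; once that is secured, the remainder is a routine Riemann--Roch and Noether tally.
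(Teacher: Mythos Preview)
Your argument is correct. The additivity step along the four-term sequence (\ref{tbext1fomega1xb}) is legitimate, the Leray identity $\chi(X,T_X)=\chi(B,f_*T_X)-\chi(B,R^1f_*T_X)$ holds for any coherent sheaf on $X$ regardless of whether $R^1f_*T_X$ is locally free, and the Hirzebruch--Riemann--Roch/Noether manipulation giving $\chi(T_X)=2K_X^2-10\chi(\mathcal{O}_X)$ is accurate. The final collection of terms checks out.

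As for the comparison: the present paper does not supply a proof of this proposition at all---it simply quotes the identity from \cite{S4}, Proposition~1.6. Your route via sequence (\ref{tbext1fomega1xb}) is entirely natural within the paper's own setup; the obvious alternative, which the machinery of the paper also makes available, is to go through Lemma~\ref{relativedualityExt1f} and compute instead the degree of the locally free sheaf $f_*(\Omega^1_{X/B}\otimes\omega_{X/B})$ via Leray for $\Omega^1_{X/B}\otimes\omega_{X/B}$ and Riemann--Roch on $X$. That route has the mild advantage that one never meets the possibly non-locally-free sheaf $R^1f_*T_X$, but as you correctly observe this is a non-issue once one works with Euler characteristics and the vanishing $f_*T_X=0$ from Theorem~\ref{as}. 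Either way the computation is the same in substance.
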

$\left.\right.$\\
If we define $N_{\Psi_f}$ as the cokernel of the homomorphism $\kappa_f$ in sequence (\ref{tbext1fomega1xb}), we obtain two short exact sequences:
\begin{equation}
\label{tbkfext1fnpsif}
0 \rightarrow T_B \xrightarrow{\kappa_f} Ext^1_f(\Omega^1_{X/B},\mathcal{O}_X) \rightarrow N_{\Psi_f} \rightarrow 0
\end{equation}
\begin{equation}
\label{npsifr1f*txtb}
0 \rightarrow N_{\Psi_f} \rightarrow R^1 f_{*}T_X \rightarrow T_B \otimes R^1 f_{*}\mathcal{O}_X \rightarrow 0.
\end{equation}
In case $B \cong \mathbb{P}^1$, from the cohomology sequence of (\ref{tbkfext1fnpsif}) one obtains immediately
\begin{equation}
\label{relationcohomologyext1npsif}
h^{0}(N_{\Psi_f})=h^{0}(Ext^1_f(\Omega^1_{X/\mathbb{P}^1},\mathcal{O}_X))-3, \quad h^1(N_{\Psi_f})=h^{1}(Ext^1_f(\Omega^1_{X/\mathbb{P}^1},\mathcal{O}_X)).
\end{equation}
Analogously, from the same cohomology sequence tensorized by $\mathcal{O}_{\mathbb{P}^1}(-1)$ it follows that
\begin{equation}
\label{npsif-1ext1f-1}
h^1(N_{\Psi_f}(-1))=h^{1}(Ext^1_f(\Omega^1_{X/\mathbb{P}^1},\mathcal{O}_X)(-1)).
\end{equation}
$\left.\right.$\\
Assume now that the fibration $f$ is also stable. To $f$ we can associate a modular morphism
$$\Psi_f: B \rightarrow \overline{\mathcal{M}}_g$$
to the \emph{moduli stack} of stable curves of genus $g$. We know (\cite{HMU}, p. 49) that $f_{*}(\Omega^1_{X/B} \otimes \omega_{X/B}) \cong \Psi^{*}_f\Omega^1_{\overline{\mathcal{M}}_g}$, thus Lemma \ref{relativedualityExt1f} gives
\begin{equation}
\label{ext1ftmg}
Ext^1_f(\Omega^1_{X/B},\mathcal{O}_X) \cong \Psi^{*}_f T_{\overline{\mathcal{M}}_g}.
\end{equation}
From (\ref{ext1ftmg}) we are then allowed to interpret $N_{\Psi_f}$ as the \emph{normal sheaf to the moduli map $\Psi_f$}.\\ \\
If the fibration $f$ is not stable, we still have a non-empty open set $U \subset B$ above which all fibres of $f$ are stable. Therefore we have an induced morphism $U \rightarrow \overline{\mathcal{M}}_g$; since $B$ is a nonsingular curve, this morphism extends to a morphism
$$\Psi_f:B \rightarrow \overline{M}_g$$
with values in the \emph{coarse moduli space}.\\
Now, assume the image of $\Psi_f$ to be contained in the smooth locus of $\overline{M}_g$. Then $\Psi^{*}_f T_{\overline{M}_g}$ is a locally free sheaf. If $f$ is stable, this sheaf is isomorphic to $\Psi^{*}_f T_{\overline{\mathcal{M}}_g}$, and we can think $N_{\Psi_f}$ in sequences (\ref{tbkfext1fnpsif}) and (\ref{npsifr1f*txtb}) as the normal sheaf to the map $\Psi_f$ with values in the coarse moduli space.
Since we want to use the results contained in Subsection \ref{rcav}, which refer to rational curves on \emph{algebraic varieties}, we will always consider the map $\Psi_f$ to have values in the coarse moduli space.\\
\end{subsection}
\begin{subsection}{Deformations of fibrations and rational fibrations}
\begin{lem}[\cite{S4}, Lemma 2.1]
\label{significatoh0h1ext1f}
Let $f:X \rightarrow B$ be a non-isotrivial fibration. Then there is a natural isomorphism
$$\mu:\emph{Ext}^1_{\mathcal{O}_X}(\Omega^1_{X/B},\mathcal{O}_X) \rightarrow H^{0}(Ext^1_f(\Omega^1_{X/B},\mathcal{O}_X))$$
and both spaces are naturally identified with the tangent space of $\emph{Def}_{f/B}$ , the functor of Artin rings of deformations of $f$ leaving the target fixed.\\
Moreover $H^{1}(Ext^1_f(\Omega^1_{X/B},\mathcal{O}_X))$ is an obstruction space for $\emph{Def}_{f/B}$.
\end{lem}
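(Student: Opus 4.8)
The plan is to derive every assertion of the lemma from one instrument, the local-to-global spectral sequence for the relative $Ext$ functors of $f$, after first pinning down what governs $\text{Def}_{f/B}$. Deforming $f$ with the target fixed is the same as deforming $X$ as a $B$-scheme, so by standard deformation theory (cf. \cite{S}) the tangent space of $\text{Def}_{f/B}$ is $\text{Ext}^1_{\mathcal{O}_X}(L_{X/B},\mathcal{O}_X)$ and $\text{Ext}^2_{\mathcal{O}_X}(L_{X/B},\mathcal{O}_X)$ is an obstruction space, where $L_{X/B}$ is the relative cotangent complex. To replace $L_{X/B}$ by $\Omega^1_{X/B}$ I would invoke the transitivity triangle $f^{*}L_B \to L_X \to L_{X/B}$: since $X$ and $B$ are nonsingular one has $L_X \cong \Omega^1_X$ and $L_B \cong \omega_B$ in degree $0$, and the map $f^{*}\omega_B \to \Omega^1_X$ of sequence (\ref{f*omegabomega1x}) is injective with cokernel $\Omega^1_{X/B}$. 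Hence $L_{X/B}$ is concentrated in degree $0$ and equals $\Omega^1_{X/B}$, giving tangent space $\text{Ext}^1_X(\Omega^1_{X/B},\mathcal{O}_X)$ and obstruction space $\text{Ext}^2_X(\Omega^1_{X/B},\mathcal{O}_X)$.

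Next I would write down the local-to-global spectral sequence
\[
E_2^{p,q}=H^p\bigl(B, Ext^q_f(\Omega^1_{X/B},\mathcal{O}_X)\bigr)\ \Longrightarrow\ \text{Ext}^{p+q}_X(\Omega^1_{X/B},\mathcal{O}_X)
\]
and kill almost all of its terms. The row $q=0$ vanishes because $Ext^0_f(\Omega^1_{X/B},\mathcal{O}_X)=f_{*}T_{X/B}=0$ by non-isotriviality (the proof of Theorem \ref{as}); the rows $q\ge 2$ vanish because $Ext^q_f=0$ for $q\ge 2$ (the fibres are one-dimensional); and the columns $p\ge 2$ vanish because $B$ is a curve. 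Thus the only surviving entries are $E_2^{0,1}=H^0(B,Ext^1_f(\Omega^1_{X/B},\mathcal{O}_X))$ and $E_2^{1,1}=H^1(B,Ext^1_f(\Omega^1_{X/B},\mathcal{O}_X))$; all differentials are forced to vanish and the sequence degenerates at $E_2$.

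Reading off the edge isomorphisms then yields
\[
\text{Ext}^1_X(\Omega^1_{X/B},\mathcal{O}_X)\cong H^0\bigl(B,Ext^1_f(\Omega^1_{X/B},\mathcal{O}_X)\bigr),\qquad \text{Ext}^2_X(\Omega^1_{X/B},\mathcal{O}_X)\cong H^1\bigl(B,Ext^1_f(\Omega^1_{X/B},\mathcal{O}_X)\bigr).
\]
The first isomorphism is the map $\mu$; combined with the first paragraph it identifies both $\text{Ext}^1_X(\Omega^1_{X/B},\mathcal{O}_X)$ and $H^0(Ext^1_f(\Omega^1_{X/B},\mathcal{O}_X))$ with the tangent space of $\text{Def}_{f/B}$. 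The second isomorphism identifies $H^1(Ext^1_f(\Omega^1_{X/B},\mathcal{O}_X))$ with the obstruction space $\text{Ext}^2_X(\Omega^1_{X/B},\mathcal{O}_X)$, which proves the final assertion.

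I anticipate that the delicate point is the first paragraph rather than the spectral sequence: one must make sure that $\text{Def}_{f/B}$ is genuinely governed by the relative cotangent complex $L_{X/B}$ and that the identification $L_{X/B}\cong \Omega^1_{X/B}$ holds. The latter is exactly where the smoothness of $X$ and $B$ enters, through the injectivity of $f^{*}\omega_B\to\Omega^1_X$ in (\ref{f*omegabomega1x}); granting this, the transitivity triangle forces $L_{X/B}$ to be concentrated in degree $0$, something that can fail in more singular situations. Once these identifications are secured, the three vanishing inputs, namely $f_{*}T_{X/B}=0$ from non-isotriviality, $Ext^q_f=0$ for $q\ge 2$ from one-dimensionality of the fibres, and $\dim B=1$, make the degeneration of the spectral sequence immediate.
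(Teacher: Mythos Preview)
The paper does not include its own proof of this lemma: it is stated with the citation [\cite{S4}, Lemma 2.1] and no argument is given in the present paper. So there is no in-paper proof to compare against.

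Your proposal is correct and is essentially the standard argument one would expect in the cited reference. The two steps are exactly the right ones: (i) identify the tangent and obstruction spaces of $\text{Def}_{f/B}$ with $\text{Ext}^1_X(\Omega^1_{X/B},\mathcal{O}_X)$ and $\text{Ext}^2_X(\Omega^1_{X/B},\mathcal{O}_X)$, and (ii) collapse the local-to-global spectral sequence for relative $Ext$ using the three vanishings $f_*T_{X/B}=0$, $Ext^q_f=0$ for $q\ge 2$, and $H^p(B,-)=0$ for $p\ge 2$. Your justification that $L_{X/B}\simeq \Omega^1_{X/B}$ via the transitivity triangle and the injectivity of $f^{*}\omega_B\to\Omega^1_X$ (already recorded in the paper at sequence (\ref{f*omegabomega1x})) is the correct way to handle the possible singular fibres, and your identification of the delicate point is accurate: once $L_{X/B}$ is concentrated in degree $0$, the spectral-sequence degeneration is automatic. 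One could alternatively bypass the cotangent complex by noting that $f$, being a morphism between smooth varieties, is an lci morphism, so the relevant $T^i$-functors are already the $\text{Ext}^i_X(\Omega^1_{X/B},\mathcal{O}_X)$; this is the language closer to \cite{S}, but the content is the same.
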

\begin{prop}[\cite{S4}, Lemma 2.2 (ii)]
\label{mappasullefibreèkodairaspencer}
Let $f:X \rightarrow B$ be a non-isotrivial fibration. For a point $b \in B$ such that the fibre $X(b)$ is reduced, the homomorphism $\kappa_f:T_B \rightarrow Ext^1_f(\Omega^1_{X/B},\mathcal{O}_X)$ in the exact sequence (\ref{tbext1fomega1xb}) induces a linear map:
\begin{equation}
\label{kfp}
\kappa_f(b):T_{b}B \rightarrow \emph{Ext}^1_{\mathcal{O}_{X(b)}}(\Omega^1_{X(b)},\mathcal{O}_{X(b)})
\end{equation}
which coincides with the Kodaira-Spencer map of the family $f$ at $b$.\\
\end{prop}
We will call \emph{rational fibrations} the fibrations parametrized by $\mathbb{P}^1$. All examples of rational fibrations are obtained as follows.\\
Let $S$ be a projective nonsingular surface and let $C \subset S$ be a projective nonsingular connected curve of genus $g$ such that
$\dim(|C|) \geq 1$.
Consider a linear pencil $\Lambda$ contained in $|C|$ whose general member is nonsingular and let $\epsilon:X \rightarrow S$ be the blow-up at the (possibly empty) set of its base points (including the infinitely near ones). We obtain a fibration $f:X \rightarrow \mathbb{P}^1$
defined as the composition of $\epsilon$ with the rational map $S \dashrightarrow \mathbb{P}^1$ defined by $\Lambda$. We will call $f$ the \emph{fibration defined by the pencil $\Lambda$}.
\begin{defn}[\cite{S4}, Definition 3.1]
Let $f:X \rightarrow \mathbb{P}^1$ be a fibration of genus $g$ and let $\mathcal{E}$ be as in \emph{(\ref{ext1ftorsionell})}. Then one has
\begin{equation}
\label{ext1fcongai}
\mathcal{E} \cong \bigoplus_{i=1}^{3g-3}\mathcal{O}_{\mathbb{P}^1}(a_i)
\end{equation}
for some integers $a_i$, $a_1 \geq a_2 \geq ... \geq a_{3g-3}$. $f$ is said to be \emph{free} if it is non-isotrivial and $Ext^1_f(\Omega^1_{X/\mathbb{P}^1},\mathcal{O}_X)$ is globally generated i.e. $a_i \geq 0$ for all $i$.
\end{defn}
The $(3g-3)$-tuple $(a_1,...,a_{3g-3})$ will be called the \emph{splitting type} of $Ext^1_f(\Omega^1_{X/\mathbb{P}^1},\mathcal{O}_X)$.\\
The following proposition explains the relation between curves with general moduli and free fibrations:
\begin{prop}[\cite{S4}, Proposition 4.8 and Theorem 4.9]
\label{relazionecurvageneraleffree}
Assume that $C$ is a general (nonsingular) curve of genus $g \geq 3$ moving in a positive-dimensional linear system on a projective nonsingular non-ruled surface $S$ (see \emph{\cite{S}}, Definition 4.3). Then a general pencil $\Lambda \subset |C|$ containing $C$ as a member defines a free fibration. On the other hand, if there exists a free fibration $f:X \rightarrow \mathbb{P}^1$ in curves of genus $g \geq 3$, then a general deformation of $f$ has $C$ as a fibre.
\end{prop}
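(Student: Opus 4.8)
The plan is to read both implications through the identification $Ext^1_f(\Omega^1_{X/\mathbb{P}^1},\mathcal{O}_X)\cong\Psi_f^{*}T_{\overline{M}_g}$ of (\ref{ext1ftmg}) together with the deformation-theoretic dictionary of Lemma \ref{significatoh0h1ext1f}, which identifies $H^{0}(Ext^1_f(\Omega^1_{X/\mathbb{P}^1},\mathcal{O}_X))$ with the tangent space to $\mathrm{Def}_{f/\mathbb{P}^1}$ and $H^{1}$ with an obstruction space. Writing $\Psi_f^{*}T_{\overline{M}_g}\cong\bigoplus_{i=1}^{3g-3}\mathcal{O}_{\mathbb{P}^1}(a_i)$, freeness of $f$ means $a_i\ge 0$ for all $i$, and on $\mathbb{P}^1$ this is equivalent to surjectivity of the evaluation map $H^{0}(\Psi_f^{*}T_{\overline{M}_g})\to(\Psi_f^{*}T_{\overline{M}_g})_b$ at one point $b$ over which the fibre $C=X(b)$ is smooth (if some $a_i<0$ the corresponding summand has no sections and evaluation fails at every point). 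By base change the target is $\mathrm{Ext}^1_{C}(\Omega^1_{C},\mathcal{O}_C)\cong H^{1}(C,T_C)=T_{[C]}\overline{M}_g$, and the evaluation is exactly the differential sending a first-order deformation of $f$ to the induced deformation of the fibre $C$ in moduli (the global counterpart of the fibrewise Kodaira--Spencer identification of $\kappa_f(b)$).

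For the direct implication I would fix a general-moduli curve $C$ with $\dim|C|\ge 1$, take a general pencil $\Lambda\subset|C|$ through $C$, and form $f:X\to\mathbb{P}^1$ after blowing up the base points of $\Lambda$. A preliminary step is that $f$ is non-isotrivial --- automatic here, since the general fibre has general moduli and the fibres therefore vary --- so that the $Ext^1_f$-machinery above applies. By the reduction just made it then suffices to show the evaluation map is surjective at the general (hence smooth) fibre. I would factor this differential through deformations of the pair $(S,C)$: any first-order deformation of $(S,C)$ with $C$ moving in a positive-dimensional system extends to a deformation of $f$ leaving $\mathbb{P}^1$ fixed, by choosing a pencil through the deformed curve and letting the base points --- hence the blow-up, stable by Example \ref{divisorieccezionalistabili} --- deform accordingly. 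Hence the image of the evaluation map contains the image of the differential $\mathrm{Def}(S,C)\to H^{1}(C,T_C)$, and the hypothesis that $C$ has general moduli (Definition 4.3 of \cite{S}) is precisely the surjectivity of this latter differential. Thus the evaluation is surjective and $f$ is free.

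For the converse, assume $f$ is free. Then all $a_i\ge 0$, so $H^{1}(\Psi_f^{*}T_{\overline{M}_g})=0$; by Lemma \ref{significatoh0h1ext1f} the obstruction space vanishes and $\mathrm{Def}_{f/\mathbb{P}^1}$ is smooth, while by Proposition \ref{tmoryxf} and Theorem \ref{Tmoryx} the component $V\subset\mathrm{Mor}(\mathbb{P}^1,\overline{M}_g)$ through $[\Psi_f]$ is smooth of dimension $h^{0}(\Psi_f^{*}T_{\overline{M}_g})$. Since $a_i\ge 0\ge -1$ for all $i$, Proposition \ref{dimlocusvaigeq0} gives $\dim\mathrm{Locus}(V)=\#\{i:a_i\ge 0\}=3g-3=\dim\overline{M}_g$, so the images of the deformations of $\Psi_f$ sweep out a dense open subset of $\overline{M}_g$. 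Because the tangent and obstruction spaces of $\mathrm{Def}_{f/\mathbb{P}^1}$ and of $\mathrm{Mor}(\mathbb{P}^1,\overline{M}_g)$ at $[\Psi_f]$ agree and both are unobstructed, the modular maps of the deformations of the fibration $f$ fill up the same Locus; consequently a general curve $C$ of genus $g$ lies on a deformation $\Psi_{f'}$ and is therefore a fibre of the corresponding deformation $f'$ of $f$.

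I expect the main obstacle to be the genericity-to-surjectivity step in the direct implication: making rigorous that the hypothesis ``$C$ general in moduli moving in a linear system'' forces the differential $\mathrm{Def}(S,C)\to H^{1}(C,T_C)$ to be surjective, and that every such pair-deformation is induced by a deformation of $f$ fixing $\mathbb{P}^1$. This requires keeping careful track of the two sources of moduli --- moving $C$ inside the fixed system $|C|$ on $S$, which contributes only the image of $H^{0}(N_{C/S})\to H^{1}(T_C)$, versus genuinely deforming $S$ --- together with the bookkeeping for the base-point blow-up and the passage to the coarse moduli space. By contrast the converse is essentially formal once the Locus-dimension count of Proposition \ref{dimlocusvaigeq0} and the unobstructedness coming from $H^{1}(\Psi_f^{*}T_{\overline{M}_g})=0$ are in hand.
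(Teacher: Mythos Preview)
The paper does not give its own proof of this proposition: it is quoted verbatim from \cite{S4} (Proposition 4.8 and Theorem 4.9) and no argument is reproduced. There is therefore nothing in the present paper to compare your attempt against; the result is used as a black box.

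That said, your outline is in the right spirit but leans on identifications that are not available in the generality of the statement. You invoke (\ref{ext1ftmg}), i.e.\ $Ext^1_f(\Omega^1_{X/\mathbb{P}^1},\mathcal{O}_X)\cong\Psi_f^{*}T_{\overline{M}_g}$, as the starting point of both directions; but in the paper this isomorphism is established only when $f$ is semistable (or, in the non-semistable case, under the extra hypothesis that the image of $\Psi_f$ lies in the smooth locus of $\overline{M}_g$). Neither is assumed in the proposition, and for an arbitrary pencil on an arbitrary non-ruled surface neither need hold. In \cite{S4} the argument works directly with $Ext^1_f(\Omega^1_{X/\mathbb{P}^1},\mathcal{O}_X)$ and the functor $\mathrm{Def}_{f/\mathbb{P}^1}$, without passing through $\Psi_f^{*}T_{\overline{M}_g}$ or through $\mathrm{Mor}(\mathbb{P}^1,\overline{M}_g)$; freeness is shown to be equivalent to surjectivity of the evaluation $H^{0}(Ext^1_f)\to H^{1}(T_C)$ at a smooth fibre, and this surjectivity is then related to the surjectivity of $H^{1}(T_S\langle C\rangle)\to H^{1}(T_C)$, which is exactly the ``general moduli'' condition. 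Your factorisation through $\mathrm{Def}(S,C)$ is the correct idea here, but it should be carried out intrinsically rather than via the coarse moduli space.

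For the converse you rely on Proposition~\ref{dimlocusvaigeq0} applied to $\mathrm{Mor}(\mathbb{P}^1,\overline{M}_g)$; this again presupposes that the image of $\Psi_f$ sits in the smooth locus, which you have not arranged. The actual argument in \cite{S4} instead uses unobstructedness of $\mathrm{Def}_{f/\mathbb{P}^1}$ (from $H^{1}(Ext^1_f)=0$) together with the surjectivity of the evaluation map at a smooth fibre to conclude directly that nearby fibrations realise all nearby moduli of $C$, and hence the general curve. The step you flag as the ``main obstacle'' --- promoting deformations of the pair $(S,C)$ to deformations of $f$ --- is indeed where the work lies, and is handled in \cite{S4} via an analysis close to that of Section~\ref{Functorsofartinrings} of the present paper.
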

The following proposition relates the cohomology of the twisted sheaves $Ext^1_f(\Omega^1_{X/\mathbb{P}^1},\mathcal{O}_X)(k)$ with the cohomology of $T_X(kF)$, which is easier to compute. First, let us write down explicit-ly the low-degree terms exact sequence associated to the Leray spectral sequence for an arbitrary sheaf of abelian groups $\mathscr{F}$ on $X$ (see \cite{CE}, cap. XV, Theorem 5.11):
$$0 \rightarrow H^{1}(B,f_{*}\mathscr{F}) \rightarrow H^{1}(X,\mathscr{F}) \rightarrow H^{0}(B,R^1f_{*}\mathscr{F}) \rightarrow$$
\begin{equation}
\label{ssf}
\rightarrow H^2(B,f_{*}\mathscr{F}) \rightarrow H^2(X,\mathscr{F}) \rightarrow H^{1}(B,R^1f_{*}\mathscr{F}) \rightarrow 0.
\end{equation}
\begin{prop}
\label{unsoloai=2}
Let $f:X \rightarrow \mathbb{P}^1$ be a non-isotrivial fibration in curves of genus $g$, let $\mathcal{E}$ as in \emph{(\ref{ext1fcongai})}, and let $F$ be a fibre of $f$. Then
\begin{itemize}
\item[(i)] for every integer $k$ one has $h^{0}(N_{\Psi_f}(k)) \leq h^{1}(T_X(kF))$;
\item[(ii)] if $X$ is regular, $h^{0}(N_{\Psi_f}(-2)) = h^{1}(T_X(-2F))$; in particular if $h^1(T_X(-2F))=0$ one has $a_1=2$ and $a_i \leq 1$ for $i=2,...,3g-3$.
\end{itemize}
\end{prop}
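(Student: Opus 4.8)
The plan is to reduce every quantity in the statement to the cohomology of $T_X(kF)$ on the surface by means of the Leray spectral sequence, and then to read off the conclusions from the two short exact sequences (\ref{tbkfext1fnpsif}) and (\ref{npsifr1f*txtb}) that define $N_{\Psi_f}$. The whole argument is a formal manipulation of these three ingredients; the only conceptual point is to see why the degree $k=-2$ is special.

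For part (i), I would first compute $h^1(T_X(kF))$ using the low-degree exact sequence (\ref{ssf}) of the Leray spectral sequence applied to $\mathscr{F}=T_X(kF)=T_X\otimes f^{*}\mathcal{O}_{\mathbb{P}^1}(k)$. Since $B\cong\mathbb{P}^1$ is a curve, $H^2(B,-)=0$ and (\ref{ssf}) collapses to a short exact sequence. By the projection formula $f_{*}(T_X(kF))=(f_{*}T_X)(k)$, which vanishes because $f$ is non-isotrivial (Theorem \ref{as}), and $R^1f_{*}(T_X(kF))=(R^1f_{*}T_X)(k)$; hence $H^1(X,T_X(kF))\cong H^0(\mathbb{P}^1,(R^1f_{*}T_X)(k))$. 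Tensoring (\ref{npsifr1f*txtb}) by the locally free sheaf $\mathcal{O}_{\mathbb{P}^1}(k)$ and taking global sections (left exact) yields an injection $H^0(N_{\Psi_f}(k))\hookrightarrow H^0((R^1f_{*}T_X)(k))$, and the inequality $h^0(N_{\Psi_f}(k))\le h^1(T_X(kF))$ follows at once from the identification above.

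For part (ii) the extra input is that at $k=-2$ this injection becomes an isomorphism, i.e. the next term in the long sequence coming from (\ref{npsifr1f*txtb})$\otimes\mathcal{O}(-2)$ vanishes. That term is $H^0((T_B\otimes R^1f_{*}\mathcal{O}_X)(-2))$, and since $T_B=T_{\mathbb{P}^1}\cong\mathcal{O}_{\mathbb{P}^1}(2)$ it equals $H^0(R^1f_{*}\mathcal{O}_X)$. To kill it I would apply (\ref{ssf}) once more, now to $\mathscr{F}=\mathcal{O}_X$: here $f_{*}\mathcal{O}_X=\mathcal{O}_{\mathbb{P}^1}$ (connected fibres), so $H^1(\mathbb{P}^1,f_{*}\mathcal{O}_X)=0$ and thus $H^1(X,\mathcal{O}_X)\cong H^0(R^1f_{*}\mathcal{O}_X)$; the regularity hypothesis $H^1(X,\mathcal{O}_X)=0$ then forces $H^0(R^1f_{*}\mathcal{O}_X)=0$. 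This gives the equality $h^0(N_{\Psi_f}(-2))=h^1(T_X(-2F))$.

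Finally, to extract the splitting type I would feed the vanishing $h^0(N_{\Psi_f}(-2))=0$ back into (\ref{tbkfext1fnpsif})$\otimes\mathcal{O}(-2)$. As $T_B(-2)\cong\mathcal{O}_{\mathbb{P}^1}$ has $h^0=1$ and $h^1=0$, the cohomology sequence gives $h^0(Ext^1_f(\Omega^1_{X/\mathbb{P}^1},\mathcal{O}_X)(-2))=h^0(N_{\Psi_f}(-2))+1=1$. Writing this out for the splitting $\bigoplus_i\mathcal{O}_{\mathbb{P}^1}(a_i)$ yields $\sum_{a_i\ge 2}(a_i-1)=1$; since each summand with $a_i\ge 2$ contributes at least $1$, there is exactly one index with $a_i=2$ and all others satisfy $a_i\le 1$, which (after ordering $a_1\ge\cdots\ge a_{3g-3}$) is the claim. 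As a consistency check, the injection $T_B=\mathcal{O}(2)\hookrightarrow\bigoplus_i\mathcal{O}(a_i)$ in (\ref{tbkfext1fnpsif}) already forces $\max_i a_i\ge 2$. I expect the only delicate point to be this last bookkeeping step: because $N_{\Psi_f}$ is a priori only coherent and not necessarily locally free, one must argue throughout with $h^0$ of its twists rather than with an honest splitting of $N_{\Psi_f}$ itself, and one must be careful that the precise value $k=-2$ is what makes both the regularity argument and the count $\sum(a_i-1)=1$ work.
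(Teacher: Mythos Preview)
Your proof is correct and follows essentially the same route as the paper's own argument: both twist sequence (\ref{npsifr1f*txtb}) by $\mathcal{O}_{\mathbb{P}^1}(k)$, identify $H^0((R^1f_*T_X)(k))$ with $H^1(T_X(kF))$ via the Leray sequence (\ref{ssf}) and the vanishing of $f_*T_X$, and then for $k=-2$ use regularity of $X$ (again through (\ref{ssf})) to kill $H^0(R^1f_*\mathcal{O}_X)$; the conclusion on the $a_i$ is drawn in both cases from $h^0\bigl(Ext^1_f(\Omega^1_{X/\mathbb{P}^1},\mathcal{O}_X)(-2)\bigr)=1$ via sequence (\ref{tbkfext1fnpsif}) twisted by $\mathcal{O}(-2)$. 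Your write-up is in fact a bit more explicit (the projection formula, the count $\sum_{a_i\ge 2}(a_i-1)=1$, and the remark that $N_{\Psi_f}$ need not be locally free), but there is no substantive difference in strategy.
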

\begin{proof}
Consider the exact sequence
\begin{equation}
\label{sequenzanormale2tensorizzata}
0 \rightarrow N_{\Psi_f}(k) \rightarrow R^1 f_{*}T_X(kF) \rightarrow R^1 f_{*}\mathcal{O}_X \otimes \mathcal{O}_{\mathbb{P}^1}(2+k) \rightarrow 0
\end{equation}
obtained by tensorizing sequence (\ref{npsifr1f*txtb}) by $\mathcal{O}_{\mathbb{P}^1}(k)$.\\
Since $f_{*}T_X(kF)=f_{*}T_X \otimes \mathcal{O}_{\mathbb{P}^1}(k)$ equals 0 by the proof of Theorem \ref{as}, sequence (\ref{ssf}) gives $h^{1}(T_X(kF))=h^{0}(R^1 f_{*}T_X(kF))$, hence from the cohomology sequence associated to (\ref{sequenzanormale2tensorizzata}) one gets $h^{0}(N_{\Psi_f}(k)) \leq h^{1}(T_X(kF))$.\\
Let $k=-2$. Again by (\ref{ssf}) and by the regularity of $X$ one has $0=h^{1}(\mathcal{O}_X)=h^{0}(R^1 f_{*}\mathcal{O}_X)$, from which $h^{0}(N_{\Psi_f}(-2)) = h^{1}(T_X(-2F))$.\\
If $h^{1}(T_X(-2F))=0$, one obtains $h^{0}(Ext^1_f(\Omega^1_{X/\mathbb{P}^1},\mathcal{O}_X)(-2))=1$ from the cohomology sequence of the short exact sequence (\ref{tbkfext1fnpsif}) tensorized by $\mathcal{O}_{\mathbb{P}^1}(-2)$. Since $f$ is non-isotrivial, (\ref{tbkfext1fnpsif}) gives $a_1 \geq 2$. The last part of $(ii)$ immediately follows.
\end{proof}
Observe that equality (\ref{relationcohomologyext1npsif}) and $(i)$ with $k=0$ tell that the dimension of the space of first order deformations of $f$ is bounded above by $h^{1}(T_X)+3$.
\end{subsection}
\end{section}
\begin{section}{The Kodaira-Spencer map of a fibration}
\begin{prop}
\label{npsiflocallyfree}
Let $f:X \rightarrow B$ be a non-isotrivial fibration with reduced fibres such that $Ext^1_f(\Omega^1_{X/B},\mathcal{O}_X)$ is locally free and $h^{0}({T_X}_{|F})=0$ for all smooth fibres of $f$. Then $N_{\Psi_f}$ is locally free.
\end{prop}
\begin{proof}
By the exact sequence (\ref{tbkfext1fnpsif}), the fact that $N_{\Psi_f}$ is locally free is equivalent to the injectivity (i.e. to the non degeneracy), for all $b \in B$, of the evaluation map $\kappa_f(b)$, which by Proposition \ref{mappasullefibreèkodairaspencer} is nothing but the Kodaira-Spencer map of the family $f$ at $b$.\\
Let $F=X(b)$ be a singular fibre of $f$. Since $F$ is reduced, by Remark \ref{ext1conormalsequence} $\kappa_f(b)$ is injective if and only if the sequence
\begin{equation}
\label{ofomega1xfmoega1f}
0 \rightarrow \mathcal{O}_F \rightarrow \Omega^1_{\mathcal{F}|F} \rightarrow \Omega^1_{F} \rightarrow 0
\end{equation}
does not split for some first order deformation $\mathcal{F}$ of $F$ obtained as the pull-back over $\text{Spec}(k[\epsilon])$ of the deformation
\begin{equation}
\xymatrix{F \ar[r] \ar[d] & X \ar[d]^f \\ b \ar[r] & B}
\end{equation}
by some vector $v \in T_{b}B$.\\
This is in turn implied by the nonsplitting of the sequence
\begin{equation}
\label{sequenzaconormaleF}
0 \rightarrow \mathcal{O}_F \rightarrow \Omega^1_{X|F} \rightarrow \Omega^1_{F} \rightarrow 0
\end{equation}
which is obvious since $\Omega^1_{F}$ is not locally free (see \cite{H}, II Theorem 8.15).\\
Let now $F=X(b)$ be a smooth fibre and consider the exact sequence
$$0 \rightarrow T_F \rightarrow {T_{X}}_{|F} \rightarrow N_{F/X} \rightarrow 0.$$
The vector space $H^{0}(N_{F/X})$ is the tangent space at $[F]$ to $\text{Hilb}_X(F) \cong B$, hence it can be identified with the tangent space $T_{b}B$ and the first coboundary map of the associated cohomology sequence is exactly the Kodaira-Spencer map of the family $f$ at $b$:
$$H^{0}({T_{X}}_{|F}) \rightarrow H^{0}(N_{F/X}) \xrightarrow{\kappa_f(b)} H^{1}(T_F).$$
Since $H^{0}({T_X}_{|F})=(0)$, the map $\kappa_f(b)$ must be injective. This completes the proof.
\end{proof}
\begin{fact}
If $f$ is a stable fibration, then $h^{0}({T_X}_{|F})=0$ for all singular fibres $F$.
\end{fact}
\begin{proof}
Let $F$ be a singular fibre of $f$ and consider the exact sequence
\begin{equation}
\label{tftxfn'fx}
0 \rightarrow T_F \rightarrow {T_X}_{|F} \rightarrow N'_{F/X} \rightarrow 0
\end{equation}
where $N'_{F/X}$ is the equisingular normal sheaf of $F$ in $X$.\\
Since the fibration is stable, $h^{0}(T_F)=0$ (cf. \cite{H2}, p. 181). Moreover, since $N'_{F/X} \cong \mathcal{I}_{p_1,...,p_{\delta}} \otimes N_{F/X}$ where $p_1,...,p_{\delta}$ are the nodes of $F$, one has $h^{0}(N'_{F/X})=0$. The cohomology sequence of (\ref{tftxfn'fx}) gives the statement.
\end{proof}
As a consequence, if the fibration is stable and $h^{0}({T_X}_{|F})=0$ for all smooth fibres of $f$, then $h^{0}({T_X}_{|F})=0$ for all fibres of $f$. In particular, this value does not depend on the fibre.\\
In the particular case $B \cong \mathbb{P}^1$, the linear equivalence of the fibres can be used to prove the following useful:
\begin{prop}
\label{R1f*TXlocallyfree}
Let $f:X \rightarrow \mathbb{P}^1$ be a non-isotrivial fibration such that $h^{0}({T_X}_{|F})$ does not depend on the fibre. Then $R^1f_{*}T_X$ is locally free. In particular $N_{\Psi_f}$ is locally free.
\end{prop}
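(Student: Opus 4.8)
The plan is to deduce the local freeness of $R^1f_*T_X$ from the theorem on cohomology and base change (Grauert), which over the reduced and connected base $\mathbb{P}^1$ guarantees that $R^1f_*T_X$ is locally free as soon as the function $b \mapsto h^1(F_b,{T_X}_{|F_b})$ is constant. Since $T_X$ is locally free on $X$ and $f$ is flat, $T_X$ is flat over $\mathbb{P}^1$, so base change applies; moreover each fibre $F_b$ is a projective scheme of dimension $1$, whence ${T_X}_{|F_b}$ carries no cohomology above degree $1$ and $h^1({T_X}_{|F_b}) = h^0({T_X}_{|F_b}) - \chi({T_X}_{|F_b})$. The hypothesis provides that $h^0({T_X}_{|F_b})$ is independent of $b$, so everything reduces to showing that the Euler characteristic $\chi({T_X}_{|F_b})$ is independent of $b$ as well.

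This is precisely where the assumption $B \cong \mathbb{P}^1$ enters: all fibres of $f$ are members of the same linear system, hence mutually linearly equivalent, $F_b \sim F$. First I would restrict the exact sequence $0 \to T_X \otimes \mathcal{O}_X(-F_b) \to T_X \to {T_X}_{|F_b} \to 0$ and take Euler characteristics, obtaining $\chi({T_X}_{|F_b}) = \chi(T_X) - \chi(T_X(-F_b))$. Because $\chi$ depends only on the linear equivalence class of the twisting divisor and $F_b \sim F$ for every $b$, the right-hand side equals $\chi(T_X) - \chi(T_X(-F))$, which is manifestly independent of $b$. Combined with the constancy of $h^0$, this gives the constancy of $h^1({T_X}_{|F_b})$, and Grauert then yields that $R^1f_*T_X$ is locally free.

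For the final assertion I would invoke the short exact sequence (\ref{npsifr1f*txtb}). The sheaf $R^1f_*\mathcal{O}_X$ is locally free, since the arithmetic genus $p_a(F_b)=g$ is constant, again by linear equivalence together with adjunction, so that $h^1(\mathcal{O}_{F_b})$ is constant and Grauert applies; hence $T_B \otimes R^1f_*\mathcal{O}_X$ is locally free as well. In (\ref{npsifr1f*txtb}) the sheaf $N_{\Psi_f}$ then appears as a subsheaf of the locally free sheaf $R^1f_*T_X$, and a coherent subsheaf of a locally free sheaf on the nonsingular curve $\mathbb{P}^1$ is torsion-free, hence locally free. I expect the only genuinely delicate point to be the control of $\chi$ and of $h^0$ over the singular and possibly non-reduced fibres: semicontinuity alone bounds $h^1$ from above but cannot force constancy, and it is exactly the linear equivalence of the fibres peculiar to the base $\mathbb{P}^1$ that upgrades the a priori only locally constant Euler characteristic to a genuinely constant one across every fibre, the special ones included.
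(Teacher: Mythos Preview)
Your proof is correct and follows essentially the same line as the paper's: both invoke Grauert's theorem (Hartshorne III.12.9) for the $f$-flat sheaf $T_X$, reduce to the constancy of $h^1({T_X}_{|F_b})$, and obtain this from the constancy of $\chi({T_X}_{|F_b})$ via the restriction sequence $0 \to T_X(-F_b) \to T_X \to {T_X}_{|F_b} \to 0$ together with the linear equivalence of the fibres over $\mathbb{P}^1$. Your justification for the last step---that $N_{\Psi_f}$, being a subsheaf of the locally free sheaf $R^1f_*T_X$ on the smooth curve $\mathbb{P}^1$, is torsion-free and hence locally free---spells out what the paper leaves implicit in its one-line appeal to sequence~(\ref{npsifr1f*txtb}).
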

\begin{proof}
Since $T_X$ is $f$-flat, we can apply \cite{H}, III Corollary 12.9, which tells that $R^1f_{*}T_X$ is locally free if $h^{1}({T_X}_{|F})$ does not depend on the fibre. Since all fibres are linearly equivalent, the cohomology sequence associated to the short exact sequence
$$0 \rightarrow T_X(-F) \rightarrow T_X \rightarrow {T_X}_{|F} \rightarrow 0$$
shows that $h^{0}({T_X}_{|F})-h^{1}({T_X}_{|F})$ does not depend on the fibre, thus $R^1f_{*}T_X$ is locally free. By sequence (\ref{npsifr1f*txtb}), $N_{\Psi_f}$ is locally free too.
\end{proof}
It is easy to construct an example of a fibration $f'$ such that $N_{\Psi_{f'}}$ is not locally free. Consider a non-isotrivial stable rational fibration $f$ and the diagram
\begin{equation}
\xymatrix{X' \ar[r] \ar[d]_{f'} & X \ar[d]^{f} \\\mathbb{P}^1 \ar[r]^{g}  & \mathbb{P}^1}
\end{equation}
where $g$ is a double covering of $\mathbb{P}^1$ and $X'$ is the fiber product. Suppose that $g$ is branched over points $q_1,q_2$ such that $ X(q_i)$ is smooth, so that $X'$ is smooth.\\
Let $\Psi_{f'}:\mathbb{P}^1 \rightarrow \overline{M}_g$ be the modular morphism associated to $f'$. One has $\text{d}\Psi_{f'}(q_i)=\kappa_{f'}(q_i)$, where $\kappa_{f'}(q_i)$ is the Kodaira-Spencer map of the family $f'$ at $q_i$. On the other hand $\Psi_{f'}=\Psi_f \circ g$ by construction, hence $\text{d} \Psi_{f'}(q_i)$ is degenerate. By the first part of the proof of Proposition \ref{npsiflocallyfree}, $N_{\Psi_{f'}}$ cannot be locally free at $q_i$.
\begin{prop}
Let $f:X \rightarrow B$ be a non-isotrivial fibration, let $F \doteq X(b)$ be a smooth fibre and let $\kappa_f(b):H^{0}(N_{F/X}) \rightarrow H^{1}(T_F)$ be the Kodaira-Spencer map. Then one of the following occurs:
\begin{itemize}
\item[(i)] $h^{0}({T_X}_{|F})=0$, and this holds if and only if $\kappa_f(b)$ is injective;
\item[(ii)] $h^{0}({T_X}_{|F})=1$, and this holds if and only if $\kappa_f(b)$ has image zero.
\end{itemize}
Moreover, (i) holds if and only if $im(\alpha) \cap im(\xi)=(0)$ in the following diagram with exact rows and columns
\begin{equation}
\xymatrix{ & H^{1}(T_F) \ar[r]^{=} & H^{1}(T_F) \\ 0 \ar[r] & H^{0}(N_{F/X}) \ar[u]^{\kappa_f(b)} \ar[r]^{\alpha} & H^{1}(T_X \langle F \rangle) \ar[u]^{p} \ar[r]^{\beta} & H^1(T_X) \ar[d]^{=} \\0 \ar[r] & H^{0}({T_X}_{|F}) \ar[u] \ar[r] & H^{1}(T_X(-F)) \ar[u]^{\xi} \ar[r]^{\gamma} & H^1(T_X) \\ & 0 \ar[u] & 0 \ar[u] &}
\end{equation}
\end{prop}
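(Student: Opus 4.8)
The plan is to realize the diagram in the statement as the cohomology of a single commutative $3\times 3$ diagram of sheaves on $X$, and then to read off everything by elementary linear algebra together with the functoriality of the connecting homomorphism.

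First I would assemble the four short exact sequences on $X$: the restriction sequence $0 \to T_F \to {T_X}_{|F} \to N_{F/X} \to 0$ (call it (a)); the sequence $0 \to T_X\langle F\rangle \to T_X \to N_{F/X} \to 0$ (b); the twisted sequence $0 \to T_X(-F) \to T_X \to {T_X}_{|F} \to 0$ (c); and the sequence $0 \to T_X(-F) \to T_X\langle F\rangle \to T_F \to 0$ coming from Theorem \ref{h1ty<x>}, where the subsheaf $\mathcal{I}_{F/X}$ of that theorem is $T_X \otimes \mathcal{I}_{F/X} = T_X(-F)$ since $F$ is a divisor (d). These fit into the standard commutative $3\times 3$ diagram whose middle and right columns are (b) and (a) and whose top and middle rows are (d) and (c). Passing to cohomology and using $h^0(T_X)=0$ (Theorem \ref{as}, as $f$ is non-isotrivial) and $h^0(T_F)=0$ (hypothesis) produces exactly the displayed diagram: the left exactness $h^0(T_X)=0$ forces $\alpha$ injective, while $h^0(T_F)=0$ forces $\xi$ injective, and the rows and columns are exact.

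Next I would extract (i) and (ii). The second column reads $0 \to H^0({T_X}_{|F}) \to H^0(N_{F/X}) \xrightarrow{\kappa_f(b)} H^1(T_F)$, so that $H^0({T_X}_{|F}) = \ker \kappa_f(b)$. Since $F$ is a (reduced) fibre, $N_{F/X} \cong \mathcal{O}_F(F) \cong \mathcal{O}_F$ is trivial because $\mathcal{O}_X(F)_{|F} = f^{*}\mathcal{O}_B(b)_{|F}$, whence $h^0(N_{F/X})=1$ by connectedness of $F$. Thus $\kappa_f(b)$ is a map out of a one-dimensional space, hence either injective or zero: in the first case $\ker\kappa_f(b)=(0)$ gives $h^0({T_X}_{|F})=0$, in the second $\ker\kappa_f(b)=H^0(N_{F/X})$ gives $h^0({T_X}_{|F})=1$. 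This is precisely the dichotomy (i)--(ii).

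For the \emph{moreover} clause I would establish the commutativity $p\circ\alpha = \kappa_f(b)$, which is the heart of the matter. The vertical maps $T_X\langle F\rangle \to T_F$, $T_X \to {T_X}_{|F}$ and $\mathrm{id}_{N_{F/X}}$ form a morphism of short exact sequences from (b) to (a), so functoriality of the connecting homomorphism makes the square relating $\alpha=\delta_{(b)}$, $\kappa_f(b)=\delta_{(a)}$ and the induced map $p$ commute, giving $p\circ\alpha=\kappa_f(b)$. Granting this, exactness of the third column yields $\ker p = \mathrm{im}\,\xi$, and since $\alpha$ is injective I obtain $\ker\kappa_f(b)=\ker(p\circ\alpha)=\alpha^{-1}(\mathrm{im}\,\xi)\cong \mathrm{im}\,\alpha\cap\mathrm{im}\,\xi$. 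Hence $\kappa_f(b)$ is injective exactly when $\mathrm{im}\,\alpha\cap\mathrm{im}\,\xi=(0)$; combined with (i) this says that $h^0({T_X}_{|F})=0$ if and only if $\mathrm{im}\,\alpha\cap\mathrm{im}\,\xi=(0)$, as claimed. The main obstacle is this commutativity: one must correctly identify the morphism (b)$\to$(a) inside the $3\times 3$ diagram and verify that both of its squares commute so that functoriality of $\delta$ applies; everything else is bookkeeping, modulo checking that $F$ is reduced so the identification $N_{F/X}\cong\mathcal{O}_F$ is legitimate.
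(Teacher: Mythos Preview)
Your argument is correct and follows essentially the same approach as the paper's: both proofs take the displayed diagram for granted (you are just more explicit in deriving it from a $3\times 3$ diagram of sheaves) and then finish by a one-line diagram chase. The only cosmetic difference is which square you exploit: you use the upper-left commutativity $p\circ\alpha=\kappa_f(b)$ together with $\ker p=\mathrm{im}\,\xi$, whereas the paper uses the lower-right commutativity $\gamma=\beta\circ\xi$ together with $\ker\beta=\mathrm{im}\,\alpha$; these are dual readings of the same diagram and yield the same conclusion.
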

\begin{proof}
The first part of the statement is obvious since $N_{F/X} \cong \mathcal{O}_F$.\\
By the proof of Theorem \ref{as} one has $h^{0}(T_X)=0$, thus $\kappa_f(b)$ is injective if and only if $\gamma$ is injective. Since the diagram is commutative, $\gamma$ is injective if and only if $\beta \circ \xi$ is, i.e. if and only if $im(\alpha) \cap im(\xi)=(0)$.
\end{proof}
Note that, by construction, $p$ is the map sending a (isomorphism class of a) first order deformation of the pair $(X,F)$ to the corresponding first order deformation of $F$. Hence the image of $\xi$ in $H^{1}(T_X \langle F \rangle)$ is the subspace parameterizing isomorphisms classes of first order deformations of $(X,F)$ inducing trivial first order deformations of $F$.\\
The map $\alpha$ sends a first order deformation $F'$ of $F$ into the isomorphism class $(X,F')$.\\
\end{section}
\begin{section}{Functors of Artin rings associated to a rational fibration defined by a linear pencil}
\label{Functorsofartinrings}
Let $S$ be a smooth projective surface and let $C \subset S$ be a connected nonsingular curve moving in a positive-dimensional linear system. Let $f:X \rightarrow \mathbb{P}^1$ be the rational fibration defined by a linear pencil in $|C|$.\\
Let $A \in \text{Ob}(\mathcal{A})$, let $C_1,C_2 \in |C|$ and let $h_i:C_i \hookrightarrow S$ be their closed embedding. A \emph{deformation of the 3-tuple $(S,C_1,C_2)$ over $A$} is defined to be a cartesian diagram
\begin{equation}
\xymatrix{(C_1,C_2) \ar[d]^{(h_1,h_2)} \ar[r] & (\mathcal{C}_1,\mathcal{C}_2) \ar[d]^{(H_1,H_2)}\\S \ar[r] \ar[d] & \mathcal{S} \ar[d]^{\Psi} \\ \text{Spec}(k) \ar[r] & \text{Spec}(A)}
\end{equation}
where $\Psi$ and $\Psi \circ H_i$ are flat.\\
Define the functor of Artin rings $\hbox{Def}^{\prime}_{(S,C_1,C_2)}$ as
$$\hbox{Def}^{\prime}_{(S,C_1,C_2)}(A)=\left\{\begin{array}{c}
                                        \hbox{isomorphisms classes of locally trivial} \\
                                        \hbox{deformations of $(S,C_1,C_2)$ over }A
                                      \end{array}\right\}.$$
This functor admits a semiuniversal formal deformation. The proof of this fact is identical to the one given in the case of pairs in \cite{S}, Theorem 3.4.17. In particular, there is a well-defined tangent space to the functor, the space $\hbox{Def}^{\prime}_{(S,C_1,C_2)}(k[\epsilon])$.\\
Let $F_1,F_2$ be two fibres of $f$ and let $j_i:F_i \rightarrow X$, $i=1,2$ their closed embedding.
In an analogous way one can define the functor of Artin rings $\text{Def}^{\prime}_{(X,F_1,F_2)}$ as
$$\hbox{Def}^{\prime}_{(X,F_1,F_2)}(A)=\left\{\begin{array}{c}
                                        \hbox{isomorphisms classes of locally trivial} \\
                                        \hbox{deformations of $(X,F_1,F_2)$ over }A
                                      \end{array}\right\}.$$
Let $A \in \hbox{ob}(\mathcal{A})$. A \emph{deformation of the 3-tuple $(f,F_1,F_2)$ over $A$} is defined to be a cartesian diagram
\begin{equation}
\xymatrix{(F_1,F_2) \ar[d]^{(j_1,j_2)} \ar[r] & (\mathcal{F}_1,\mathcal{F}_2) \ar[d]^{(J_1,J_2)}\\X \ar[r] \ar[d]^{f} & \mathcal{X} \ar[d]^{F} \\\mathbb{P}^1 \ar[r] \ar[d] & \mathbb{P}^1 \times \text{Spec}(A) \ar[d]^{\Psi}\\ \text{Spec}(k) \ar[r] & \text{Spec}(A)}
\end{equation}
where $\Psi$, $\Psi \circ F$ and $F \circ J_j$ are flat.\\
Define the functor of Artin rings $\hbox{Def}^{\prime}_{(f,F_1,F_2)}$ as
$$\hbox{Def}^{\prime}_{(f,F_1,F_2)}(A)=\left\{\begin{array}{c}
                                        \hbox{isomorphisms classes of locally trivial} \\
                                        \hbox{deformations of $(f,F_1,F_2)$ over }A
                                      \end{array}\right\}.$$
In order to prove that $\hbox{Def}^{\prime}_{(f,F_1,F_2)}(k[\epsilon])$ has a structure of vector space one has to prove that the functor $\text{Def}^{\prime}_{(f,F_1,F_2)}$ satisfies conditions $H_0$ and $H_{\epsilon}$ of Schlessinger's theorem (see \cite{S}, Theorem 2.3.2).\\
Condition $H_0$ is obviously satisfied. From the fact that the functors $\text{Def}^{\prime}_{F_i}$, $i=1,2$ and $\text{Def}^{\prime}_f$ satisfy condition  $H_{\epsilon}$ (see \cite{S}, Corollary 2.4.2 and Theorem 3.4.8), it immediately follows that also $\hbox{Def}^{\prime}_{(f,F_1,F_2)}$ has to.
\begin{lem}
\label{defff1f2defxf1f2}
Notation as above, let $X$ be a regular surface. Then one has $\emph{Def}^{\prime}_{(f,F_1,F_2)}(k[\epsilon]) \cong \emph{Def}^{\prime}_{(X,F_1,F_2)}(k[\epsilon]) \oplus H^{0}(f^{*}T_{\mathbb{P}^1})$.
\end{lem}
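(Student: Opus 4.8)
The plan is to realize the claimed decomposition as the splitting of a short exact sequence of vector spaces induced by the forgetful morphism of functors
$$\Phi:\text{Def}^{\prime}_{(f,F_1,F_2)} \rightarrow \text{Def}^{\prime}_{(X,F_1,F_2)},$$
which discards the fibration map and remembers only the deformation of the surface together with the two divisors. On tangent spaces this gives a linear map $d\Phi$, and the goal is to prove that $d\Phi$ is surjective with kernel canonically isomorphic to $H^{0}(f^{*}T_{\mathbb{P}^1})$. Since every short exact sequence of $k$-vector spaces splits, this immediately yields the asserted direct sum, so no \emph{canonical} splitting is needed.

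First I would identify $\ker d\Phi$. An element of the kernel is a first order deformation of $(f,F_1,F_2)$ whose underlying deformation of $(X,F_1,F_2)$ is trivial; thus $\mathcal{X}=X\times\text{Spec}(k[\epsilon])$ and $\mathcal{F}_i=F_i\times\text{Spec}(k[\epsilon])$, and the only remaining datum is a morphism $F:X\times\text{Spec}(k[\epsilon])\rightarrow \mathbb{P}^1\times\text{Spec}(k[\epsilon])$ over $\text{Spec}(k[\epsilon])$ reducing to $f$. By Proposition \ref{tmoryxf} these morphisms form a torsor under $T_{[f]}\text{Mor}(X,\mathbb{P}^1)\cong H^{0}(X,f^{*}T_{\mathbb{P}^1})$. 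The requirement that each $\mathcal{F}_i$ stay a fibre of $F$ is automatic: the restriction of a section $v\in H^{0}(f^{*}T_{\mathbb{P}^1})$ to the connected fibre $F_i$ lies in $H^{0}(F_i,\mathcal{O}_{F_i})\otimes T_{p_i}\mathbb{P}^1=T_{p_i}\mathbb{P}^1$, hence is constant, so $F$ contracts $\mathcal{F}_i$ to a (possibly moving) point of $\mathbb{P}^1$. Finally, two distinct such $v$ cannot give isomorphic deformations of the triple: an isomorphism inducing the identity on $(X,F_1,F_2)$ and on the rigidly fixed target $\mathbb{P}^1\times\text{Spec}(k[\epsilon])$ would be of the form $\text{id}+\epsilon w$ with $w\in H^{0}(X,T_X)$, and $H^{0}(T_X)=0$ by Theorem \ref{as}. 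Hence $\ker d\Phi\cong H^{0}(f^{*}T_{\mathbb{P}^1})$.

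The heart of the argument is the surjectivity of $d\Phi$, and this is exactly where the regularity hypothesis $H^{1}(X,\mathcal{O}_X)=0$ enters. Given a first order deformation $(\mathcal{X},\mathcal{F}_1,\mathcal{F}_2)$ of $(X,F_1,F_2)$, I must equip $\mathcal{X}$ with a fibration restricting to $f$ and having $\mathcal{F}_1,\mathcal{F}_2$ as fibres. The two divisors carry the invertible sheaves $\mathcal{O}_{\mathcal{X}}(\mathcal{F}_1)$ and $\mathcal{O}_{\mathcal{X}}(\mathcal{F}_2)$, which restrict on the central fibre to the isomorphic sheaves $\mathcal{O}_X(F_1)\cong\mathcal{O}_X(F_2)$ (the $F_i$ being fibres of $f$). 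Regularity, through Theorem \ref{xlambdaopicx}, forces the restriction $\text{Pic}(\mathcal{X})\rightarrow\text{Pic}(X)$ to be injective (its kernel is governed by $H^{1}(X,\mathcal{O}_X)=0$), whence $\mathcal{O}_{\mathcal{X}}(\mathcal{F}_1)\cong\mathcal{O}_{\mathcal{X}}(\mathcal{F}_2)=:\mathcal{L}$ and the defining sections $s_1,s_2$ of $\mathcal{F}_1,\mathcal{F}_2$ become two sections of the single sheaf $\mathcal{L}$. Their common zero locus is $\mathcal{F}_1\cap\mathcal{F}_2$, whose central fibre $F_1\cap F_2$ is empty; being supported on the central fibre it is itself empty, so $[s_1:s_2]$ defines an honest morphism $\mathcal{X}\rightarrow\mathbb{P}^1\times\text{Spec}(k[\epsilon])$, normalized to reduce to $f$ by choosing $s_1,s_2$ to lift the sections defining $f$. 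This is the required deformation of $(f,F_1,F_2)$, and it maps to the prescribed pair deformation under $\Phi$.

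The main obstacle is precisely this last point: a priori $\mathcal{F}_1$ and $\mathcal{F}_2$ need not remain linearly equivalent after deforming $X$, and without that equivalence one can neither span a pencil nor reconstruct the fibration. Regularity is what removes the continuous part of the Picard group and guarantees that the equivalence persists; I expect establishing this rigidity to be where the genuine content lies, the attendant base-point-freeness being the easy infinitesimal part. Once surjectivity and the kernel computation are in hand, the short exact sequence $0\rightarrow H^{0}(f^{*}T_{\mathbb{P}^1})\rightarrow\text{Def}^{\prime}_{(f,F_1,F_2)}(k[\epsilon])\rightarrow\text{Def}^{\prime}_{(X,F_1,F_2)}(k[\epsilon])\rightarrow 0$ of vector spaces splits, giving the stated isomorphism.
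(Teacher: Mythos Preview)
Your proof is correct and follows essentially the same approach as the paper: both analyze the forgetful morphism to $\text{Def}'_{(X,F_1,F_2)}$, use regularity via Theorem \ref{xlambdaopicx} to force $\mathcal{O}_{\mathcal{X}}(\mathcal{F}_1)\cong\mathcal{O}_{\mathcal{X}}(\mathcal{F}_2)$ and hence obtain surjectivity of the differential, and identify the kernel with $H^{0}(f^{*}T_{\mathbb{P}^1})$. Your version is more explicit (constructing the fibration as $[s_1:s_2]$ and verifying by hand that fibres remain fibres and that distinct $v$ give non-isomorphic deformations), whereas the paper simply invokes \cite{S}, Proposition 3.4.2 for the kernel identification, but the substance is the same.
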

\begin{proof}
Consider the forgetful map $\beta: \hbox{Def}^{\prime}_{(f,F_1,F_2)} \rightarrow \hbox{Def}^{\prime}_{(X,F_1,F_2)}$ and take $\lambda_{0}=\mathcal{O}_X(F_1)=\mathcal{O}_X(F_2) \in \hbox{Pic}(X)$. Since $X$ is regular, by Theorem \ref{xlambdaopicx} one has $P_{\lambda_{0}}(k[\epsilon])=H^{1}(X,\mathcal{O}_X)=(0)$, thus for every first order deformation
\begin{equation}
\label{rappresentantexia}
\xymatrix{(F_1,F_2) \ar[d]^{(j_1,j_2)} \ar[r] & (\mathcal{F}_1,\mathcal{F}_2) \ar[d]^{(J_1,J_2)}\\X \ar[r] \ar[d] & \mathcal{X} \ar[d]^{\Psi} \\ \text{Spec}(k) \ar[r] & \text{Spec}(k[\epsilon])}
\end{equation}
of $(X,F_1,F_2)$ one has that $\mathcal{O}_{\mathcal{X}}(\mathcal{F}_1)=\mathcal{O}_{\mathcal{X}}(\mathcal{F}_2)$ i.e. $\mathcal{F}_1$ and $\mathcal{F}_2$ are linearly equivalent in $\hbox{Div}(\mathcal{X})$. Consequently, (\ref{rappresentantexia}) is the image of some first order deformation of $(f,F_1,F_2)$ i.e. the differential $d\beta:\hbox{Def}^{\prime}_{(f,F_1,F_2)}(k[\epsilon]) \rightarrow \hbox{Def}^{\prime}_{(X,F_1,F_2)}(k[\epsilon])$ is surjective. The kernel of $d \beta$ parametrizes isomorphisms classes of locally trivial first order deformations of $f$ leaving the domain, the target and the two fibres $F_1,F_2$ fixed, thus by \cite{S}, Proposition 3.4.2, it is isomorphic to $H^{0}(f^{*}T_{\mathbb{P}^1})$.
\end{proof}
\begin{teo}
\label{equivalencexf1xf2sc1c2}
Notation as above, let $S$ be a regular surface, let $F_i$ be the strict transform of $C_i$, $i=1,2$, in the blow-up $\epsilon:X \rightarrow S$ and let $C_1 \neq C_2$. Then there is an equivalence of functors of Artin rings between $\emph{Def}^{\prime}_{(X,F_1,F_2)}$ and $\emph{Def}^{\prime}_{(S,C_1,C_2)}$.
\end{teo}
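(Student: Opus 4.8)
The plan is to realize the equivalence geometrically, as the pair of mutually inverse operations of \emph{relative blow-up} and \emph{relative contraction of the exceptional curves}, checking at each stage that the construction is unobstructed over an Artinian base. First I would fix the geometry over the closed point: since $C_1 \neq C_2$ are distinct members of the pencil, the scheme-theoretic intersection $Z \doteq C_1 \cap C_2$ is precisely the $0$-dimensional base locus, $\epsilon\colon X \to S$ is the blow-up along $Z$ (performed iteratively over the infinitely near base points when $C_1$ and $C_2$ are tangent), and $F_i$ is the strict transform of $C_i$. The key input is Example \ref{divisorieccezionalistabili}: the exceptional $(-1)$-curves $E_j$ satisfy $h^i(E_j,N_{E_j/X})=0$ for all $i$, so they are \emph{stable} and \emph{rigid} in $X$. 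Hence every locally trivial deformation of $X$ carries a unique relative family of $(-1)$-curves extending the $E_j$.

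Next I would define the natural transformation $\Phi\colon \text{Def}^{\prime}_{(S,C_1,C_2)} \to \text{Def}^{\prime}_{(X,F_1,F_2)}$ by relative blow-up. Starting from a locally trivial deformation $(\mathcal{S},\mathcal{C}_1,\mathcal{C}_2)$ over $A$, I would form the relative intersection scheme $\mathcal{Z}=\mathcal{C}_1\cap\mathcal{C}_2$. Because $A$ is Artinian and $\mathcal{C}_1,\mathcal{C}_2$ meet properly over the closed point (this is where $C_1 \neq C_2$ enters), the intersection stays proper, so $\mathcal{Z}$ is a relative complete intersection, finite and flat over $A$, restricting to $Z$; the regularity of $S$ keeps the classes $[\mathcal{C}_i]$ rigid by Theorem \ref{xlambdaopicx}, so $\mathcal{Z}$ has constant length. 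Blowing up $\mathcal{S}$ along $\mathcal{Z}$ yields $\mathcal{X} \to \text{Spec}(A)$ deforming $X$, and the strict transforms $\mathcal{F}_i$ of the $\mathcal{C}_i$ complete a deformation of $(X,F_1,F_2)$. Naturality in $A$ is the base-change functoriality of the blow-up.

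In the reverse direction I would define $\Psi\colon \text{Def}^{\prime}_{(X,F_1,F_2)} \to \text{Def}^{\prime}_{(S,C_1,C_2)}$ by relative contraction. Given $(\mathcal{X},\mathcal{F}_1,\mathcal{F}_2)$ over $A$, the stability recalled above produces the unique relative family $\mathcal{E}=\sum_j\mathcal{E}_j$ of $(-1)$-curves extending $E$; contracting it fibrewise (the contraction exists in the family because the $\mathcal{E}_j$ are relatively $(-1)$-curves) gives $\mathcal{S}\to\text{Spec}(A)$ deforming $S$, with $\mathcal{C}_i$ the images of $\mathcal{F}_i$. Finally I would verify that $\Phi$ and $\Psi$ are mutually inverse. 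Over the closed point this is simply the statement that blowing up the base locus and contracting the exceptional curves undo each other; since both operations are canonical and respect the flat structures, the fibrewise inverseness propagates over every $A \in \text{Ob}(\mathcal{A})$, giving the desired natural isomorphism.

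The step I expect to be the main obstacle is the relative contraction: I must show that contracting the relatively-$(-1)$ curves $\mathcal{E}_j$ produces a genuinely \emph{flat} deformation $\mathcal{S}/A$ of the \emph{smooth} surface $S$, with smooth total space over the Artinian base, and that the resulting $\mathcal{S}$ depends functorially on the input. The vanishing $h^1(E_j,N_{E_j/X})=0$ of Example \ref{divisorieccezionalistabili} is exactly what kills the obstruction to extending the exceptional curves and makes the extension unique, so the contraction is well defined; the remaining bookkeeping concerns the infinitely near base points, where $\mathcal{Z}$ is non-reduced and the blow-up and contraction must be carried out one infinitely near point at a time, the same stability input applying at each stage.
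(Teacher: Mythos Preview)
Your proposal is correct and follows essentially the same route as the paper: both construct the equivalence as the pair of mutually inverse operations ``blow up the deformed base locus'' and ``contract the exceptional curves'', invoking the stability of the $E_j$ (Example~\ref{divisorieccezionalistabili}) for the contraction direction and the regularity of $S$ for the blow-up direction. The only cosmetic difference is that the paper phrases the blow-up step via linear equivalence of $\mathcal{C}_1$ and $\mathcal{C}_2$ (hence a deformation of the pair $(S,Z)$), whereas you form $\mathcal{Z}=\mathcal{C}_1\cap\mathcal{C}_2$ directly; these produce the same relative blow-up.
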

\begin{proof}
Consider the map of functors of Artin rings $\gamma: \hbox{Def}^{\prime}_{(X,F_1,F_2)} \rightarrow \hbox{Def}^{\prime}_{(S,C_1,C_2)}$ defined in the following way. Let $A \in \hbox{ob}(\mathcal{A})$, let $\xi_{A} \in \hbox{Def}^{\prime}_{(X,F_1,F_2)}(A)$ having as representative a deformation
\begin{equation}
\label{rappresentantexikepsilon}
\xymatrix{(F_1,F_2) \ar[d]^{(j_1,j_2)} \ar[r] & (\mathcal{F}_1,\mathcal{F}_2) \ar[d]^{(J_1,J_2)}\\X \ar[r] \ar[d] & \mathcal{X} \ar[d] \\ \text{Spec}(k) \ar[r] & \text{Spec}(A)}
\end{equation}
By Example \ref{divisorieccezionalistabili} the exceptional divisors $E_1,...,E_k$ of the blow-up $\epsilon$ are stable, thus for all $i=1,...,k$ every infinitesimal locally trivial deformation of $X$ is induced by an infinitesimal locally trivial deformation of the embedding $e_i:E_i \hookrightarrow X$.\\
Hence (\ref{rappresentantexikepsilon}) induces a deformation of the morphism $\epsilon$
\begin{equation}
\xymatrix{X \ar[r] \ar[d]^{\epsilon} & \mathcal{X} \ar[d]^{\mathcal{E}} \\S \ar[r] \ar[d] & \mathcal{S} \ar[d] \\ \text{Spec}(k) \ar[r] & \text{Spec}(A)}
\end{equation}
which is in particular a deformation of $S$.\\
One defines $\gamma_A(\xi_A)$ as the element having as representative the deformation
\begin{equation}
\xymatrix{(\epsilon(F_1),\epsilon(F_2)) \ar[d]^{(h_1,h_2)} \ar[r] & (\mathcal{E}(\mathcal{F}_1),\mathcal{E}(\mathcal{F}_2)) \ar[d]^{(H_1,H_2)}\\S \ar[r] \ar[d] & \mathcal{S} \ar[d] \\ \text{Spec}(k) \ar[r] & \text{Spec}(A)}
\end{equation}
where $h_i: \epsilon(F_i) \hookrightarrow S$, $i=1,2$, is the embedding.\\
For all morphisms $A \rightarrow B$, $A,B \in \hbox{ob}(\mathcal{A})$ one has that the diagram
\begin{equation}
\xymatrix{\hbox{Def}^{\prime}_{(X,F_1,F_2)}(A) \ar[d] \ar[r]^{\gamma_A} & \hbox{Def}^{\prime}_{(S,C_1,C_2)}(A) \ar[d] \\
\hbox{Def}^{\prime}_{(X,F_1,F_2)}(B) \ar[r]^{\gamma_B} & \hbox{Def}^{\prime}_{(S,C_1,C_2)}(B)}
\end{equation}
is commutative, hence $\gamma$ is a well-defined map of functors.\\
Let us define an inverse for $\gamma$, say $\gamma^{\prime}$. Let $\mu_A \in \hbox{Def}^{\prime}_{(S,C_1,C_2)}(A)$ having as representative the diagram
\begin{equation}
\xymatrix{(C_1,C_2) \ar[d]^{(h_1,h_2)} \ar[r] & (\mathcal{C}_1,\mathcal{C}_2) \ar[d]^{(H_1,H_2)}\\S \ar[r] \ar[d] & \mathcal{S} \ar[d] \\ \text{Spec}(k) \ar[r] & \text{Spec}(A)}
\end{equation}
and let $Z\doteq \left\{p_1,...,p_{k}\right\}$ be the zero-dimensional subscheme of $S$ whose points are the base points of the pencil spanned by $C_1$ and $C_2$. Since $S$ is regular, $\mathcal{C}_1$ and $\mathcal{C}_2$ are linearly equivalent in $\mathcal{S}$, thus $\mu_A$ induces a deformation of $(S,Z)$ over $A$ giving a deformation of $\epsilon:X \rightarrow S$ over $A$
\begin{equation}
\xymatrix{X \ar[r] \ar[d]^{\epsilon} & \mathcal{X} \ar[d]^{\mathcal{E}} \\S \ar[r] \ar[d] & \mathcal{S} \ar[d] \\ \text{Spec}(k) \ar[r] & \text{Spec}(A)}
\end{equation}
One defines $\gamma^{\prime}(\mu_A)$ as the element having as representative the deformation
\begin{equation}
\xymatrix{(\widetilde{C}_1,\widetilde{C}_2) \ar[d]^{(j_1,j_2)} \ar[r] & (\widetilde{\mathcal{C}}_1,\widetilde{\mathcal{C}}_2) \ar[d]^{(J_1,J_2)}\\X \ar[r] \ar[d]^{f} & \mathcal{X} \ar[d] \\ \text{Spec}(k) \ar[r] & \text{Spec}(A)}
\end{equation}
where $\widetilde{C}_i$ is the strict transform of $C_i$ and $j_i:\widetilde{C}_i \hookrightarrow X$, $i=1,2$ is the embedding.\\
$\gamma'$ is a well-defined map of functors and by construction $\gamma$ and $\gamma^{\prime}$ are inverse of each other, hence there is a natural equivalence between $\hbox{Def}^{\prime}_{(X,F_1,F_2)}$ and $\hbox{Def}^{\prime}_{(S,C_1,C_2)}$.
\end{proof}
\end{section}
\begin{section}{The main theorem}
Consider a primitive globally generated and ample line bundle $L$ on a smooth K3 surface $S$. Then the general member of $|L|$ is a smooth irreducible curve (see \cite{SA}). If $g$ is the genus of that curve,
we say that $(S,L)$ is a \emph{smooth primitively polarized K3 surface of genus $g$}. One has $L^2=2g-2$ and $\dim|L|=g$. In the sequel we will always assume $g \geq 3$.\\
If $\text{Pic}(S) \cong \mathbb{Z}[L]$, then $L$ is very ample on $S$, hence it embeds $S$ as a smooth projective surface in $\mathbb{P}^g$, whose smooth hyperplane section is a canonical curve (see \cite{SA}).\\
Let $\mathcal{F}_g$ be the moduli stack parameterizing smooth primitively polarized K3 surfaces $(S,L)$ of genus $g$. $\mathcal{F}_g$ is smooth, irreducible and of dimension $19$ (see e.g. \cite{BPV}, VIII Theorem 7.3). Moreover, for a general pair $(S,L) \in \mathcal{F}_g$ one has $\text{Pic}(S) \cong \mathbb{Z}[L]$.\\
Let $\mathcal{P}_g$ be the moduli stack parameterizing pairs $(S,C)$, where $S$ is a smooth K3 surface with a primitive polarization $L$ of genus $g$ and $C \in |L|$ is a stable curve of genus $g$. We have a surjective morphism of stacks
$$\pi_g:\mathcal{P}_g \rightarrow \mathcal{F}_g$$
given by the natural projection.\\
Since the fibres of $\pi_g$ are irreducible, $\mathcal{P}_g$ is an irreducible stack of dimension $19+g$. The tangent space to $\mathcal{P}_g$ at a pair $(S,C)$ with $C$ smooth is $H^{1}(T_S \langle C \rangle)$ (see e.g. \cite{FLA}, Section 3).\\
We have a morphism of stacks
$$c_g:\mathcal{P}_g \rightarrow \overline{\mathcal{M}}_g$$
defined as $c_g((S,C))=[C]$. Define $\mathcal{K}_g$ as the image of the morphism $c_g$ restricted to pairs $(S,C)$ such that $C$ is smooth, and $\overline{\mathcal{K}}_g$ as the closure of $\mathcal{K}_g$ in $\overline{\mathcal{M}}_g$.\\
Since $19+g \geq 3g-3$ if and only if $g \leq 11$, one could expect naively $c_g$ to be dominant exactly for these values of $g$. Actually the situation is not that simple:
\begin{teo}
\label{cg}
With notation as above one has
\begin{equation}
\label{codimkg}
\text{codim}_{\mathcal{M}_g}(\mathcal{K}_g)=\left\{\begin{array}{cc}
                                0, & g \leq 9 \hbox{ or } g=11 \\
                                1, & g=10 \\
                                3, & g=12 \\
                                2g-22, & g \geq 13.
                                \end{array}\right.
\end{equation}
In particular, $c_g$ is dominant for $g \leq 9$ and $g = 11$ (cf. \emph{\cite{M1}}), but not for $g=10$ (cf. \emph{\cite{CU}}), and generically finite for $g=11$ and $g \geq 13$, but not for $g=12$ (cf. \emph{\cite{M2}}).
\end{teo}
If we denote by $F_g$ the general fibre of the morphism $c_g$, it follows that $\dim(F_g)$ is the expected one apart from the cases $g=10,12$.\\
Consider a linear pencil $\Lambda \subset |L|$ whose general element is nonsingular ($\Lambda$ is always non-isotrivial by \cite{CK}, Proposition 1.2). Let $\epsilon:X \rightarrow S$ be the blow-up at the base-points $p_1,...,p_{2g-2}$ of $\Lambda$. The rational fibration  $f:X \rightarrow \mathbb{P}^1$ in curves of genus $g$ ($\geq 3$) defined by $\Lambda$ will be called a \emph{K3-type fibration (of genus $g$)}.\\
By a \emph{general K3-type fibration} we mean a K3-type fibration $f$ such that $(S,L)$ is a general element of $\mathcal{F}_g$ and moreover the pencil $\Lambda$ is general as a line in $|L| \cong \mathbb{P}^g$.\\
In this case $\Lambda$ is a Lefschetz pencil i.e. every singular curve in $\Lambda$ is irreducible with one node and no other singularities, thus $f$ is a stable fibration. In particular, by Lemma \ref{relativedualityExt1f} one has that $Ext^1_f(\Omega^1_{X/\mathbb{P}^1},\mathcal{O}_X)$ is locally free.
\begin{prop}
\label{npsiflocallyfreek3}
Let $f$ be a K3-type fibration defined by a linear pencil whose elements are irreducible curves with at most ordinary nodes. Then $N_{\Psi_f}$ is locally free.
\end{prop}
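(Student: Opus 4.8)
The statement I want to prove is that for a K3-type fibration $f$ whose pencil consists of irreducible curves with at most ordinary nodes, the normal sheaf $N_{\Psi_f}$ is locally free. The cleanest route is to reduce the claim to the hypotheses of Proposition \ref{npsiflocallyfree}, which guarantees local freeness of $N_{\Psi_f}$ for any non-isotrivial fibration with reduced fibres satisfying $h^{0}({T_X}_{|F})=0$ on every smooth fibre. So the whole task is to verify those three hypotheses for a K3-type fibration of the stated form. I would first observe that a K3-type fibration is non-isotrivial essentially by construction, and that the hypothesis on the singular members (irreducible, at most ordinary nodes) forces every fibre to be reduced. The only substantive point is therefore the vanishing $h^{0}({T_X}_{|F})=0$ on the smooth fibres.

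\emph{Reducing $h^0({T_X}_{|F})$ to a statement on $S$.} Let $F$ be a smooth fibre, the strict transform of a smooth curve $C\in\Lambda\subset|L|$. The plan is to pass from $X$ back to the K3 surface $S$ using the blow-up sequence (\ref{sequenzatangentisx}), $0\to T_X\to \epsilon^{*}T_S\to \mathcal{O}_E(-E)\to 0$. Restricting to the smooth fibre $F$ (which meets each exceptional divisor $E_i$ transversally in a single point, since the $p_i$ are smooth points of $C$) and taking cohomology, I expect to relate $h^{0}({T_X}_{|F})$ to $h^{0}({\epsilon^{*}T_S}_{|F}) = h^{0}(C,{T_S}_{|C})$, the correction terms coming from $\mathcal{O}_E(-E)_{|F}$ being a skyscraper/torsion contribution that I would control directly. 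Thus it suffices to show $h^{0}(C,{T_S}_{|C})=0$ for a smooth curve $C\in|L|$ on a K3 surface.

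\emph{The vanishing on the K3 surface.} Here I would use the normal bundle sequence $0\to T_C\to {T_S}_{|C}\to N_{C/S}\to 0$ on the smooth curve $C$. Since $S$ is a K3 surface, $K_S=0$, so by adjunction $N_{C/S}=\mathcal{O}_C(C)\cong \omega_C$, a line bundle of degree $2g-2$; and $T_C=\omega_C^{-1}$ has negative degree for $g\geq 2$, so $h^{0}(T_C)=0$. The associated cohomology sequence gives an injection $H^{0}({T_S}_{|C})\hookrightarrow H^{0}(N_{C/S})=H^{0}(\omega_C)$, whose image consists of those first-order deformations of $C$ in $S$ that extend to vector fields on $S$ along $C$; the coboundary into $H^{1}(T_C)$ is precisely the infinitesimal obstruction. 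I would argue that because $C$ moves in the pencil with the prescribed base locus while $S$ itself is rigid as a K3 surface (so $T_S$ has no global sections, $h^{0}(T_S)=0$), any section of ${T_S}_{|C}$ that lifts must come from $H^{0}(T_S)=0$; more precisely, I would play off the restriction $H^{0}(T_S)\to H^{0}({T_S}_{|C})$ against the sequence $0\to T_S(-C)\to T_S\to {T_S}_{|C}\to 0$ and the vanishing $h^{0}(T_S)=h^{1}(T_S(-C))=0$ to conclude $h^{0}({T_S}_{|C})=0$.

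\emph{The main obstacle.} The delicate step is the last one: establishing $h^{0}({T_S}_{|C})=0$ cleanly, i.e. controlling $h^{1}(T_S(-C))=h^{1}(T_S\otimes L^{-1})$ on the K3 surface. By Serre duality on $S$ (with $K_S=0$) this equals $h^{1}(\Omega^{1}_S\otimes L)$, and I would invoke the fact that $S$ has $h^{0}(T_S)=h^{2}(T_S)=0$ together with a vanishing for $\Omega^1_S\otimes L$ coming from $L$ being ample and primitive (Saint-Donat/Reid type results already cited, e.g. Proposition \ref{saintdonat}). I expect the genus bound implicit in the K3-type setup ($g\geq3$, $L^2=2g-2$) to be exactly what makes this cohomology vanish. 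Once $h^{0}({T_S}_{|C})=0$ is in hand, the reduction back up to $X$ yields $h^{0}({T_X}_{|F})=0$ on every smooth fibre, and Proposition \ref{npsiflocallyfree} delivers the conclusion that $N_{\Psi_f}$ is locally free.
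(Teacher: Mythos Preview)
Your overall architecture matches the paper: reduce to Proposition~\ref{npsiflocallyfree} by checking that the fibres are reduced and that $h^{0}({T_X}_{|F})=0$ on smooth fibres, and then push this last vanishing down to $h^{0}({T_S}_{|C})=0$ via the inclusion $T_X\hookrightarrow\epsilon^{*}T_S$. That part is fine and is exactly what the paper does.

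The genuine gap is in your proposed proof of $h^{0}({T_S}_{|C})=0$. You want to obtain it from the restriction sequence $0\to T_S(-C)\to T_S\to {T_S}_{|C}\to 0$ together with the vanishing $h^{1}(T_S(-C))=0$, and you suggest that this vanishing should follow from ampleness and primitivity of $L$ via some Kodaira/Saint-Donat--type argument. But $h^{1}(T_S(-C))$ is \emph{not} zero in general: it equals the dimension of the general fibre of the Mukai map $c_g$, and the paper itself computes it in Lemma~\ref{h1ts-ck3} as $22-2g$ for $g\le 9$, $3$ for $g=10$, $1$ for $g=12$, and $0$ only for $g=11$ or $g\ge 13$. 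Equivalently, by your own Serre duality $h^{1}(T_S(-C))=h^{1}(\Omega^{1}_S\otimes L)$, and there is simply no Nakano/Kodaira-type vanishing for $\Omega^{1}_S\otimes L$ on a K3 surface that forces this to be zero. So your route to $h^{0}({T_S}_{|C})=0$ collapses precisely in the range $g\le 12$, $g\neq 11$.

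The paper avoids this by not going through $H^{1}(T_S(-C))$ at all: it cites \cite{CK}, Proposition~1.2, where the vanishing $h^{0}({T_S}_{|C})=0$ is obtained as the injectivity of the Kodaira--Spencer map $H^{0}(N_{C/S})\to H^{1}(T_C)$ for irreducible (possibly nodal) curves $C\in|L|$ on a primitively polarized K3. That injectivity is the correct substitute for the vanishing you were hoping for; you should either invoke that result directly or reproduce its argument, rather than appeal to a cohomology vanishing that does not hold.
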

\begin{proof}
Since $(S,L)$ is a primitive polarization, all fibres of $f$ are reduced. By Proposition \ref{npsiflocallyfree} it is then sufficient to show that $h^{0}({T_X}_{|F})=0$ for any smooth fibre $F$. Consider the inclusion of sheaves $0 \rightarrow T_X \hookrightarrow \epsilon^{*}T_S$ and restrict it to the fibre $F$, which is the strict transform of a curve $C \subset S$ which is smooth or nodal. Applying the left exact functor $\epsilon_{*}$ we obtain $h^{0}({T_X}_{|F}) \leq h^{0}(\epsilon_{*}(\epsilon^{*}{T_S}_{|F}))=h^{0}({T_S}_{|C})$. Since $h^{0}({T_S}_{|C})=0$ by the proof of \cite{CK}, Proposition 1.2, $h^{0}({T_X}_{|F})$ must be zero.
\end{proof}
In particular this holds for a general K3-type fibration.
\begin{prop}
\label{h0h1ext1f}
Let $f:X \rightarrow \mathbb{P}^1$ be a K3-type fibration of genus $g$. Then one has $h^{0}(Ext^1_f(\Omega^1_{X/\mathbb{P}^1},\mathcal{O}_X))=2g+20$ and $h^{1}(Ext^1_f(\Omega^1_{X/\mathbb{P}^1},\mathcal{O}_X))=0$. In particular deformations of $f$ are unobstructed.
\end{prop}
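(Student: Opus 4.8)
The plan is to read off $\chi(Ext^1_f)$ from the Riemann--Roch formula (\ref{chiext1fomega1xb}) and then prove the vanishing $h^1\bigl(Ext^1_f(\Omega^1_{X/\mathbb{P}^1},\mathcal{O}_X)\bigr)=0$ separately; the value of $h^0$ and the unobstructedness (via Lemma \ref{significatoh0h1ext1f}, $H^1(Ext^1_f)$ being the obstruction space of $\text{Def}_{f/\mathbb{P}^1}$) then follow formally. For the Euler characteristic I would use that $\epsilon\colon X\to S$ is the blow-up of the K3 surface $S$ at the $2g-2$ base points of $\Lambda$, so $\chi(\mathcal{O}_X)=\chi(\mathcal{O}_S)=2$, $K_X^2=K_S^2-(2g-2)=2-2g$, and $b=0$; substituting into (\ref{chiext1fomega1xb}) gives $\chi=22-2(2-2g)-2(g-1)=2g+20$. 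Since $Ext^1_f$ is locally free on $\mathbb{P}^1$ (Lemma \ref{relativedualityExt1f}), $h^0-h^1=2g+20$ and it suffices to show $h^1=0$.

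For the vanishing I would pass to the Serre-dual picture. Because $Ext^0_f(\Omega^1_{X/\mathbb{P}^1},\mathcal{O}_X)=f_*T_{X/\mathbb{P}^1}=0$ (Theorem \ref{as}) and the relative $Ext^2_f$ vanishes, the local-to-global spectral sequence collapses and yields $H^1\bigl(Ext^1_f(\Omega^1_{X/\mathbb{P}^1},\mathcal{O}_X)\bigr)\cong\text{Ext}^2_X(\Omega^1_{X/\mathbb{P}^1},\mathcal{O}_X)$, which Serre duality on $X$ turns into $h^0(X,\Omega^1_{X/\mathbb{P}^1}\otimes\omega_X)$. Now $\Omega^1_{X/\mathbb{P}^1}\cong\omega_{X/\mathbb{P}^1}\otimes\mathcal{I}_Z$, where $Z$ is the reduced critical scheme of $f$ (locally at a node $xy=0$ one finds $\Omega^1_{X/\mathbb{P}^1}\cong\mathfrak{m}_z$); for a general K3-type fibration the pencil is Lefschetz and $Z$ consists of the $6g+18$ nodes of the singular fibres. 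Using $\omega_X=\mathcal{O}_X(E)$ (as $\omega_S=\mathcal{O}_S$), $\omega_{X/\mathbb{P}^1}=\mathcal{O}_X(E+2F)$ (from (\ref{relativecanonicalsheaf})), and $F=\epsilon^*L-E$, the line bundle $\omega_{X/\mathbb{P}^1}\otimes\omega_X$ becomes $\mathcal{O}_X(2E+2F)=\epsilon^*\mathcal{O}_S(2L)$, so that
$$h^1\bigl(Ext^1_f(\Omega^1_{X/\mathbb{P}^1},\mathcal{O}_X)\bigr)=h^0(S,2L\otimes\mathcal{I}_{\bar{Z}}),$$
where $\bar{Z}\subset S$ is the set of nodes of the singular members of $\Lambda$. (Equivalently, via (\ref{npsifr1f*txtb}) together with $H^2(X,T_X)=0$ --- which follows from (\ref{sequenzatangentisx}), $H^2(S,T_S)=0$ and $H^1(\mathcal{O}_E(-E))=0$ --- the same vanishing is the surjectivity of the map $H^1(X,T_X)\to H^1(X,f^*T_{\mathbb{P}^1})$ induced by $df$.)

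The hard part is therefore to show that the $6g+18$ nodes impose independent conditions on the $(4g-2)$-dimensional system $|2L|$, i.e. that $h^0(S,2L\otimes\mathcal{I}_{\bar{Z}})=0$. I would attack this through the ``derivative of the pencil'' $\delta=s_0\,ds_1-s_1\,ds_0\in H^0(S,\Omega^1_S\otimes 2L)$, whose zero scheme is $\bar{Z}$ together with the $2g-2$ base points; its Koszul resolution twisted by $2L$ reads $0\to\mathcal{O}_S(-2L)\to T_S\to\mathcal{I}_{\bar{Z}\cup\text{Base}}(2L)\to 0$, and combined with $H^0(S,T_S)=0$ and the Kodaira vanishing $H^1(S,-2L)=0$ it already gives $h^0(S,2L\otimes\mathcal{I}_{\bar{Z}\cup\text{Base}})=0$. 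The remaining point --- removing the base points, i.e. upgrading $\mathcal{I}_{\bar{Z}\cup\text{Base}}$ to $\mathcal{I}_{\bar{Z}}$ --- is exactly where genericity of the pencil is needed, and is the main obstacle; I expect to settle it by a Cayley--Bacharach/monodromy argument for Lefschetz pencils. A cleaner route that sidesteps the node count altogether is to compute $h^0(Ext^1_f)=\dim_k\text{Def}_{f/\mathbb{P}^1}(k[\epsilon])$ geometrically through the functor equivalences of Section \ref{Functorsofartinrings} (Lemma \ref{defff1f2defxf1f2}, Theorem \ref{equivalencexf1xf2sc1c2}): a first-order deformation of $f$ amounts to deforming the polarized K3 surface $(S,L)$ ($19$ parameters), the pencil $\Lambda\subset|L|$ ($\dim\mathbb{G}(1,\mathbb{P}^g)=2g-2$ parameters), and the identification $\Lambda\cong\mathbb{P}^1$ ($\dim\text{PGL}_2=3$ parameters), for a total of $19+(2g-2)+3=2g+20$. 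Smoothness of this moduli problem would give unobstructedness and $h^0(Ext^1_f)=2g+20$ simultaneously, whence $h^1=h^0-\chi=0$.
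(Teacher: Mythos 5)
Your Euler--characteristic computation $\chi\bigl(Ext^1_f(\Omega^1_{X/\mathbb{P}^1},\mathcal{O}_X)\bigr)=2g+20$ from (\ref{chiext1fomega1xb}) is correct, and the Serre-dual reformulation $h^1\bigl(Ext^1_f(\Omega^1_{X/\mathbb{P}^1},\mathcal{O}_X)\bigr)=h^0(S,2L\otimes\mathcal{I}_{\bar Z})$, with $\bar Z$ the $6g+18$ nodes of the singular members, is a legitimate reduction. But your primary route does not close. The Koszul sequence $0\to\mathcal{O}_S(-2L)\to T_S\to\mathcal{I}_{\bar Z\cup\mathrm{Base}}(2L)\to 0$ only yields $h^0\bigl(S,2L\otimes\mathcal{I}_{\bar Z\cup\mathrm{Base}}\bigr)=0$, and since $\mathcal{I}_{\bar Z}\supset\mathcal{I}_{\bar Z\cup\mathrm{Base}}$ the inequality runs the wrong way: knowing that no member of $|2L|$ contains the nodes \emph{and} the $2g-2$ base points says nothing about whether one contains the nodes alone. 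You flag this yourself and defer it to an unspecified ``Cayley--Bacharach/monodromy argument''; that deferred step is precisely the hard content of the vanishing ($6g+18$ points against $h^0(2L)=4g-2$, so the count is favorable but independence is not automatic), and as written the proof is incomplete. This route would moreover need the pencil to be Lefschetz in order to write $\Omega^1_{X/\mathbb{P}^1}\cong\omega_{X/\mathbb{P}^1}\otimes\mathcal{I}_Z$ with $Z$ reduced, whereas the proposition is stated for an arbitrary K3-type fibration.

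Your ``cleaner route'' is in substance the paper's actual proof: the paper computes $h^0\bigl(Ext^1_f(\Omega^1_{X/\mathbb{P}^1},\mathcal{O}_X)\bigr)=\dim\mathrm{Def}^{\prime}_f(k[\epsilon])$ by chaining $\mathrm{Def}^{\prime}_{(S,C_1)}$ (dimension $19+g$), the second marked curve ($+g$), the equivalence $\mathrm{Def}^{\prime}_{(S,C_1,C_2)}\cong\mathrm{Def}^{\prime}_{(X,F_1,F_2)}$ of Theorem \ref{equivalencexf1xf2sc1c2}, the splitting of Lemma \ref{defff1f2defxf1f2} ($+3$), and the forgetful map to $\mathrm{Def}^{\prime}_f$ ($-2$ for the Kodaira--Spencer images of the two fibres), arriving at $2g+20=\chi$, whence $h^1=0$ and unobstructedness via Lemma \ref{significatoh0h1ext1f}. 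Your count $19+(2g-2)+3$ is the same arithmetic, but as presented it is a heuristic parameter count: the substantive point --- that \emph{every} first-order deformation of $f$ arises from deforming $(S,L)$, the pencil, and the identification with $\mathbb{P}^1$, with the differentials of the forgetful maps surjective and their kernels of the stated dimensions --- is exactly what those cited results establish, and you invoke them without carrying the argument out. So the proposal identifies the right ingredients but completes neither route.
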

\begin{proof}
Notation as in Theorem \ref{equivalencexf1xf2sc1c2}, let $(S,L)$ be a primitively polarized K3 surface of genus $g$ and let $C_1,C_2$ be general elements in $\Lambda$, $C_1 \neq C_2$. Let $\alpha: \hbox{Def}^{\prime}_{(S,C_1,C_2)} \rightarrow \hbox{Def}^{\prime}_{(S,C_1)}$ be the forgetful map and consider its differential $d \alpha:\hbox{Def}^{\prime}_{(S,C_1,C_2)}(k[\epsilon]) \rightarrow \hbox{Def}^{\prime}_{(S,C_1)}(k[\epsilon])$, which is surjective. The kernel of $d \alpha$ is the vector space parameterizing isomorphisms classes of locally trivial first order deformations of $(S,C_1,C_2)$ which induce trivial first order deformations of the pair $(S,C_1)$. Hence its dimension is the dimension of the image of the Kodaira-Spencer map $\kappa: H^{0}(N_{{C_2}/S}) \rightarrow H^{1}(T_{C_2})$, which is $g$ since $\kappa$ is injective by \cite{CK}, Proposition 1.2.\\
Since $\hbox{Def}^{\prime}_{(S,C_1)}(k[\epsilon])$ is isomorphic to $H^1(T_S \langle C_1 \rangle)$ (see Theorem \ref{h1ty<x>}), which has dimension $19+g$, one obtains that $\hbox{Def}^{\prime}_{(S,C_1,C_2)}(k[\epsilon])$ has dimension $19+2g$, which is also the dimension of $\hbox{Def}^{\prime}_{(X,F_1,F_2)}(k[\epsilon])$ by Theorem \ref{equivalencexf1xf2sc1c2}.\\
By Lemma \ref{defff1f2defxf1f2} one has that $\hbox{Def}^{\prime}_{(f,F_1,F_2)}(k[\epsilon])$ has dimension $19+2g+3=22+2g$.\\
In the end consider the forgetful map $\eta:\hbox{Def}^{\prime}_{(f,F_1,F_2)} \rightarrow \hbox{Def}^{\prime}_{f}$. The kernel of  the (surjective) differential $d \eta$ is isomorphic to the image of the Kodaira-Spencer map $\kappa^{\prime}: H^{0}(N_{F_1/X}) \oplus H^{0}(N_{F_2/X}) \rightarrow H^{1}(T_{F_1}) \oplus H^{1}(T_{F_2})$, which has dimension 2. It follows that $\dim \hbox{Def}^{\prime}_{f}(k[\epsilon])=h^{0}(Ext^1_f(\Omega^1_{X/\mathbb{P}^1},\mathcal{O}_X))=2g+20$. By equality (\ref{chiext1fomega1xb}) one has $h^{1}(Ext^1_f(\Omega^1_{X/\mathbb{P}^1},\mathcal{O}_X))=0$. The unobstructedness follows from Lemma \ref{significatoh0h1ext1f}.
\end{proof}
\begin{prop}
\label{generalk3typefibrationfreeiff}
Let $f$ be a general K3-type fibration of genus $g$. Then $f$ is free if and only if $g \leq 9$ or $g=11$.
\end{prop}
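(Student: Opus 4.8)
The plan is to deduce the statement from the geometry of the Mukai map $c_g$: freeness of $f$ should be equivalent to the dominance of $c_g$, which by Theorem \ref{cg} happens exactly for $g \leq 9$ and $g = 11$. I would treat the two implications separately, the forward one by a direct appeal to the general-moduli criterion of Proposition \ref{relazionecurvageneraleffree}, and the backward one by a dimension count for $\mathrm{Locus}(V)$, so as to keep the argument independent of the full splitting type (which is only established afterwards in Theorem \ref{decomposizioneext1fk32}). Throughout I use the standing assumption that $\Psi_f$ maps $\mathbb{P}^1$ into the smooth locus of $\overline{M}_g$, so that $\Psi^{*}_f T_{\overline{M}_g}$ is locally free and identifies with $Ext^1_f(\Omega^1_{X/\mathbb{P}^1},\mathcal{O}_X)$ by (\ref{ext1ftmg}).

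For the \emph{if} direction, suppose $g \leq 9$ or $g = 11$. Then $c_g$ is dominant by Theorem \ref{cg}, so for a general $(S,L) \in \mathcal{F}_g$ and a general pencil the general member of $\Lambda$ is a curve with general moduli in $\mathcal{M}_g$. A K3 surface is non-ruled and, by Proposition \ref{saintdonat}, $C$ moves in the positive-dimensional system $|L|$ of dimension $g \geq 3$; hence Proposition \ref{relazionecurvageneraleffree} applies verbatim and a general pencil $\Lambda \subset |L|$ defines a free fibration. Since a general K3-type fibration is by definition the one attached to such a general pencil, $f$ is free.

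For the \emph{only if} direction I would argue by contradiction, assuming $f$ is free and deducing $\mathrm{codim}_{\mathcal{M}_g}(\mathcal{K}_g) = 0$. Let $V \subseteq \mathrm{Mor}(\mathbb{P}^1, \overline{M}_g)$ be the irreducible component containing $[\Psi_f]$. By Proposition \ref{h0h1ext1f} and (\ref{ext1ftmg}) one has $h^0(\Psi^{*}_f T_{\overline{M}_g}) = 2g+20$ and $h^1 = 0$, so by Theorem \ref{Tmoryx} the scheme $\mathrm{Mor}(\mathbb{P}^1, \overline{M}_g)$ is smooth of dimension $2g+20$ at $[\Psi_f]$. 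On the other hand, the modular morphisms attached to K3-type fibrations form a family of dimension $\dim \mathcal{F}_g + \dim \mathrm{Gr}(2, g+1) + \dim \mathrm{PGL}_2 = 19 + (2g-2) + 3 = 2g+20$, accounting for the choice of polarized K3 surface, of the pencil in $|L| \cong \mathbb{P}^g$, and of the parametrization of the base. Since these dimensions coincide and $V$ is smooth, hence locally irreducible, at $[\Psi_f]$, the K3-type modular morphisms fill a neighbourhood of $[\Psi_f]$ in $V$; as $V$ is irreducible its general point is again such a morphism, whose image lies in $\overline{\mathcal{K}}_g$. Therefore $\mathrm{Locus}(V) \subseteq \overline{\mathcal{K}}_g$, and so $\dim \mathrm{Locus}(V) \leq \dim \mathcal{K}_g = 3g - 3 - \mathrm{codim}_{\mathcal{M}_g}(\mathcal{K}_g)$.

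To finish, since $f$ is free all splitting integers satisfy $a_i \geq 0 \geq -1$, so Proposition \ref{dimlocusvaigeq0} applies and yields $\dim \mathrm{Locus}(V) = \#\{i : a_i \geq 0\} = 3g-3$. Comparing with the previous bound forces $\mathrm{codim}_{\mathcal{M}_g}(\mathcal{K}_g) = 0$, i.e. $c_g$ is dominant, i.e. $g \leq 9$ or $g = 11$ by (\ref{codimkg}); contrapositively, for $g = 10$ and for all $g \geq 12$ the fibration $f$ cannot be free. I expect the delicate point to be the inclusion $\mathrm{Locus}(V) \subseteq \overline{\mathcal{K}}_g$: one must ensure that \emph{every} nearby deformation of the abstract morphism $\Psi_f$ is induced by a genuine K3-type fibration, and not merely that the K3-type morphisms form a subfamily of the expected dimension. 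This is what the coincidence of the smooth tangent space $H^0(\Psi^{*}_f T_{\overline{M}_g})$ of $\mathrm{Mor}$ with the unobstructed deformation space of $f$ (Lemma \ref{significatoh0h1ext1f} and Proposition \ref{h0h1ext1f}) is meant to supply, and making the identification of the two deformation problems precise is the crux of the argument.
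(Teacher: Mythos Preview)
Your ``if'' direction is exactly the paper's: dominance of $c_g$ for $g\le 9$ and $g=11$ puts a general curve of genus $g$ in $|L|$, and Proposition~\ref{relazionecurvageneraleffree} then gives freeness of the general pencil.

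For the ``only if'' direction you and the paper share the same core step---showing that a small deformation of a K3-type fibration is again of K3 type---but you package the conclusion differently. The paper argues on the $\mathrm{Def}_f$ side: stability of the exceptional curves (Example~\ref{divisorieccezionalistabili}) plus unobstructedness (Proposition~\ref{h0h1ext1f}) force every small deformation of $f$ to arise from a pencil on a K3 surface, so for $g=10$ or $g\ge 12$ no deformation carries a general fibre, and the \emph{converse} clause of Proposition~\ref{relazionecurvageneraleffree} gives non-freeness directly. You instead pass to $\mathrm{Mor}(\mathbb P^1,\overline{M}_g)$, match the dimension $2g+20$ of the K3-type family with that of the smooth component $V$, conclude $\mathrm{Locus}(V)\subseteq\overline{\mathcal K}_g$, and finish with Proposition~\ref{dimlocusvaigeq0}. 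This is correct and is in fact the mechanism the paper deploys later in Proposition~\ref{equalsthecodimensionofkg}; the price is that you must invoke the smooth-locus statement (Proposition~\ref{generalfk3fibresnoautomorphisms}) to identify $Ext^1_f$ with $\Psi_f^*T_{\overline M_g}$ and to make $\mathrm{Mor}$ and Proposition~\ref{dimlocusvaigeq0} available. The paper's route avoids this forward reference entirely by staying with $\mathrm{Def}_f$ and using the second half of Proposition~\ref{relazionecurvageneraleffree}, which you did not exploit. Your self-flagged ``delicate point'' is real but is precisely what the identification $H^0(Ext^1_f)\cong T_{[\Psi_f]}\mathrm{Mor}$ together with the unobstructedness of both sides resolves; the paper sidesteps it by never leaving the fibration side.
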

\begin{proof}
By construction, the stability of the $E_i$ (see Example \ref{divisorieccezionalistabili}) and Proposition \ref{h0h1ext1f}, a (small) deformation of a K3-type fibration $f$ of genus $g$ is again a K3-type fibration (of the same genus). Using Theorem \ref{cg} one can then conclude that there are no deformations of $f$ having as fibre the general curve of genus $g$ for $g=10$ or $g \geq 12$, hence in these cases $f$ is not free by Proposition \ref{relazionecurvageneraleffree}.\\
On the other hand, if $g \leq 9$ or $g=11$ then the linear system $|L|$ of a general smooth primitively polarized K3 surface $(S,L)$ of genus $g$ contains the general curve of genus $g$, again by Theorem \ref{cg}. Thus the general pencil $\Lambda \subset |L|$ defines a free fibration by Proposition \ref{relazionecurvageneraleffree}.
\end{proof}
Let $f$ be a K3-type fibration of genus $g$. Consider the short exact sequence (see (\ref{npsifr1f*txtb}))
\begin{equation}
\label{sequenzafondamentale}
0 \rightarrow N_{\Psi_f} \rightarrow R^1 f_{*}T_X \rightarrow T_{\mathbb{P}^1} \otimes R^1 f_{*} \mathcal{O}_X \rightarrow 0.
\end{equation}
Let $F$ be a fibre of $f$. We have that $h^1(F,\mathcal{O}_F)=g$ and thus by \cite{H}, III Corollary 12.9, $R^1 f_{*} \mathcal{O}_X$ is a locally free sheaf of rank $g$. From sequence (\ref{ssf}) one obtains the two equalities $0=h^{1}(\mathcal{O}_X)=h^{0}(R^1 f_{*}\mathcal{O}_X)$ and
$1=h^{2}(\mathcal{O}_X)=h^{1}(R^1 f_{*} \mathcal{O}_X)$.\\
Then one can conclude that $R^1 f_{*} \mathcal{O}_X=\mathcal{O}_{\mathbb{P}^1}(-2) \oplus \mathcal{O}_{\mathbb{P}^1}(-1)^{\oplus g-1}$, from which
\begin{equation}
\label{tp1tensorr1fstarox}
T_{\mathbb{P}^1} \otimes R^1 f_{*} \mathcal{O}_X=\mathcal{O}_{\mathbb{P}^1} \oplus \mathcal{O}_{\mathbb{P}^1}(1)^{\oplus g-1}.
\end{equation}
By Serre and Hodge duality one has that $h^2(T_S)=h^{0}(\Omega^1_S)=h^1(\mathcal{O}_S)=0$ and $h^{0}(T_S)=h^2(\Omega^1_S)=h^1(\omega_S)=0$.\\
Since $f$ is non-isotrivial, one has $h^{0}(T_X)=0$ by Theorem \ref{as}. Moreover one has that $h^{2}(T_X)=h^{2}(T_S)$.\\
By Riemann-Roch formula one has $\chi(T_X)=2K_{X}^2-10 \chi(\mathcal{O}_X)$ and then $h^1(T_X)=16+4g$.
By sequence (\ref{ssf}) one has $h^{0}(R^1 f_{*}T_X)=16+4g$ and $h^{1}(R^1 f_{*}T_X)=0$. The cohomology sequence associated to (\ref{sequenzafondamentale}) is then
$$0 \rightarrow H^{0}(N_{\Psi_f}) \rightarrow H^{0}(R^1 f_{*} T_X) \rightarrow H^{0}(T_{\mathbb{P}^1} \otimes R^1 f_{*} \mathcal{O}_X) \rightarrow H^{1}(N_{\Psi_f}) \rightarrow 0.$$
One can compute $\chi(N_{\Psi_f})=2g+17$ using this cohomology sequence or equality (\ref{chiext1fomega1xb}) combined with equality (\ref{relationcohomologyext1npsif}).\\
The following lemmas lead us to compute the cohomology of $T_X(-F)$ and $T_X(-2F)$, which is related to our problem by Proposition \ref{unsoloai=2}.
\begin{lem}[\cite{CLM}, Lemma 4]
\label{h0ncpgk3}
Let $C$ be a general hyperplane section of a general primitively polarized K3 surface of genus $g \geq 6$. If $k \geq 3$ then $H^0(N_{C/\mathbb{P}^{g-1}}(-k)) =(0)$. If $g \geq 7$ then $H^0(N_{C/\mathbb{P}^{g-1}}(-2)) =(0)$; if $g = 6$ then $h^{0}(N_{C/\mathbb{P}^{g-1}}(-2)) \leq 1$.
\end{lem}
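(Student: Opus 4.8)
The plan is to set the intrinsic geometry of $C$ aside and exploit that $C$ is a hyperplane section of the K3 surface $S\subset\mathbb{P}^g$. For a general $(S,L)\in\mathcal{F}_g$ Proposition \ref{saintdonat} gives $\mathrm{Pic}(S)=\mathbb{Z}[L]$ with $L$ very ample, so $S\hookrightarrow\mathbb{P}^g$ with $\mathcal{O}_S(1)=L$, and a general hyperplane $H=\mathbb{P}^{g-1}$ meets $S$ transversally along the canonical curve $C$, where $\mathcal{O}_C(1)=L|_C=\omega_C$. Transversality yields the key identification $N_{C/\mathbb{P}^{g-1}}\cong N_{S/\mathbb{P}^g}|_C$. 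First I would restrict $N:=N_{S/\mathbb{P}^g}$ to $C$ using the divisor sequence of $C\in|L|$ on $S$, twisted by $L^{-k}$:
\[
0 \to N\otimes L^{-k-1} \to N\otimes L^{-k} \to N_{C/\mathbb{P}^{g-1}}(-k) \to 0 .
\]
It therefore suffices to control $H^0$ and $H^1$ of $N\otimes L^{-j}$ on the surface: if $H^0(N\otimes L^{-k})=0$, then $h^0(N_{C/\mathbb{P}^{g-1}}(-k))$ equals the kernel of the multiplication-by-$C$ map $H^1(N\otimes L^{-k-1})\to H^1(N\otimes L^{-k})$.

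Next I would compute these surface groups through the normal bundle sequence $0\to T_S\to T_{\mathbb{P}^g}|_S\to N\to 0$ and the restricted Euler sequence $0\to\mathcal{O}_S\to L^{\oplus(g+1)}\to T_{\mathbb{P}^g}|_S\to 0$, twisted by $L^{-j}$. Two structural inputs make this manageable: $S$ is projectively normal, so the multiplication maps $H^0(L^{j-1})\otimes H^0(L)\to H^0(L^j)$ are surjective, which kills the relevant $H^1(T_{\mathbb{P}^g}|_S\otimes L^{-j})$; and on a K3 one has $T_S\cong\Omega^1_S$ with $\Omega^1_S$ $\mu_L$-stable, so every line-bundle and $\Omega^1_S$-twist cohomology is read off from Riemann--Roch and Kodaira/Serre duality. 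In this way $H^0(N\otimes L^{-j})$ embeds into $H^1(T_S\otimes L^{-j})\cong H^1(\Omega^1_S\otimes L^j)^\vee$, and the whole problem collapses onto the groups $H^1(\Omega^1_S\otimes L^j)$ and the multiplication maps linking consecutive twists.

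For $k\ge 3$ the twists are negative enough that every term vanishes for positivity reasons: the line bundles $L^{1-j}$ with $j\ge 3$ have no cohomology in the relevant degrees, $\mu_L$-stability of $\Omega^1_S$ forces $H^0(\Omega^1_S\otimes L^{-j})=H^2(\Omega^1_S\otimes L^{j})=0$, and projective normality closes the argument, giving $H^0(N_{C/\mathbb{P}^{g-1}}(-k))=0$. This is the easy regime and I expect no essential difficulty there.

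The case $k=2$ is the genuine content and the main obstacle. Here vanishing no longer follows from positivity alone: the argument bottoms out at $H^1(\Omega^1_S\otimes L^2)$ together with the multiplication map relating $N\otimes L^{-3}$ and $N\otimes L^{-2}$, and both are governed by the Gaussian (Wahl) map $\Phi_L:\wedge^2 H^0(L)\to H^0(\Omega^1_S\otimes L^2)$; one shows $h^0(N_{C/\mathbb{P}^{g-1}}(-2))$ is bounded by $\dim\mathrm{coker}(\Phi_L)$, so everything reduces to the surjectivity of $\Phi_L$. A Riemann--Roch count gives $\chi(\Omega^1_S\otimes L^2)=8g-28$ against $\dim\wedge^2 H^0(L)=\binom{g+1}{2}$, so surjectivity is numerically possible for all $g\ge 6$; what is \emph{not} formal, and is the heart of the proof, is to prove that $\Phi_L$ is actually surjective for the general K3 of genus $g\ge 7$ and that its corank is at most $1$ in the exceptional genus-$6$ case. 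This is where one must invoke the delicate analysis of Gaussian maps on K3 sections (Wahl, and the degeneration techniques of \cite{CLM}): one degenerates $S$ to a surface on which $\Phi_L$ is computable, typically a union of scrolls or a cone, computes the corank there, and transports the conclusion back to the general $(S,L)$ by semicontinuity. The genus-$6$ exception reflects the special projective geometry of genus-$6$ K3 surfaces, which leaves room for exactly one extra section.
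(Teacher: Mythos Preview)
The paper does not prove this lemma at all; it is quoted directly from \cite{CLM}, Lemma~4, and no argument is supplied between its statement and the next lemma. There is thus no ``paper's own proof'' to compare your proposal against.

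That said, your outline is essentially a reconstruction of the CLM argument and is on target: the identification $N_{C/\mathbb{P}^{g-1}}\cong N_{S/\mathbb{P}^g}|_C$ via transversality, the reduction to the surface cohomology of $N_{S/\mathbb{P}^g}\otimes L^{-j}$ through the divisor and Euler/normal sequences, and the observation that the case $k\ge 3$ follows from positivity while $k=2$ is governed by a Gaussian map are all correct in spirit. The final input you name---surjectivity of the Gaussian map for a general primitively polarized K3 of genus $g\ge 7$, with corank at most $1$ at $g=6$, established by degeneration and semicontinuity---is exactly the content of \cite{CLM}. One point to tighten: be careful which Gaussian map you invoke. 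The curve Wahl map $\Phi_{\omega_C}$ has corank at least $1$ for \emph{every} K3 section (Wahl's theorem), so bounding $h^{0}(N_{C/\mathbb{P}^{g-1}}(-2))$ by $\mathrm{corank}\,\Phi_{\omega_C}$ would only yield $\le 1$, not the vanishing for $g\ge 7$. The argument really has to pass through the surface map $\Phi_L$ and the cohomology of $N_{S/\mathbb{P}^g}$, as you indicate; the precise identification of $h^{0}(N_{C/\mathbb{P}^{g-1}}(-2))$ with the relevant cokernel requires some bookkeeping that you have not written out but that is carried out in \cite{CLM}.
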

\begin{lem}[\cite{CD}, Lemma 2.4]
\label{conoxc}
Let $C$ be a reduced and irreducible, not necessarily smooth, degenerate canonical curve in $\mathbb{P}^g$, of arithmetic genus $g$. Let $X_C$ be the cone over $C$ from a point in $\mathbb{P}^g$ off the hyperplane in which $C$ sits. For all $i \geq 0$, one has
$$H^{0}(N_{X_C/\mathbb{P}^g}(-i)) \cong \bigoplus_{k \geq i} H^{0}(N_{C/\mathbb{P}^{g-1}}(-k)).$$
\end{lem}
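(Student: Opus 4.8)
The plan is to exploit the $\mathbb{G}_m$-action that scales the cone direction and to compare $X_C$ with $C$ through a single hyperplane section. Choose homogeneous coordinates $x_0,\dots,x_{g-1},t$ on $\mathbb{P}^g$ so that $H=\{t=0\}\cong\mathbb{P}^{g-1}$ is the hyperplane containing $C$ and $v=[0:\cdots:0:1]$ is the vertex. If $F_1,\dots,F_r\in k[x_0,\dots,x_{g-1}]$ generate the homogeneous ideal of $C$, the same polynomials generate the homogeneous ideal of $X_C$, so $X_C$ is invariant under the action $\lambda\cdot[x:t]=[x:\lambda t]$, which fixes $H$ pointwise and fixes $v$. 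Consequently $N_{X_C/\mathbb{P}^g}$, together with every twist $\mathcal{O}_{\mathbb{P}^g}(-i)$, is $\mathbb{G}_m$-equivariant, and all the cohomology groups in sight acquire a weight grading in which the $t$-degree records the weight.

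First I would record that $C=X_C\cap H$ is the hyperplane section of the cone cut out by $t$, that it misses the vertex (as $t(v)\neq0$) and is transverse to $X_C$ along the smooth locus of $C$. Comparing the two ways of splitting $N_{C/\mathbb{P}^g}$ — as $N_{C/\mathbb{P}^{g-1}}\oplus N_{H/\mathbb{P}^g}|_C$ and as $N_{X_C/\mathbb{P}^g}|_C\oplus N_{H/\mathbb{P}^g}|_C$ — and cancelling the common summand $\mathcal{O}_C(1)$ identifies $N_{X_C/\mathbb{P}^g}|_C\cong N_{C/\mathbb{P}^{g-1}}$. Since $t$ is a non-zerodivisor cutting out the Cartier divisor $C$ on the reduced scheme $X_C$, multiplication by $t$ yields for every $i$ the short exact sequence
$$0\to N_{X_C/\mathbb{P}^g}(-i-1)\xrightarrow{\;\cdot t\;}N_{X_C/\mathbb{P}^g}(-i)\to N_{C/\mathbb{P}^{g-1}}(-i)\to0.\qquad(*)$$
Taking global sections reduces the lemma to two claims: that the restriction $H^0(N_{X_C/\mathbb{P}^g}(-j))\to H^0(N_{C/\mathbb{P}^{g-1}}(-j))$ is surjective for every $j\ge i$, and that $H^0(N_{X_C/\mathbb{P}^g}(-k))=0$ for $k\gg0$. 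Granting these, a downward telescoping of $(*)$, which terminates by Serre vanishing, produces precisely $H^0(N_{X_C/\mathbb{P}^g}(-i))\cong\bigoplus_{k\ge i}H^0(N_{C/\mathbb{P}^{g-1}}(-k))$.

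The heart of the argument is the surjectivity, and this is where the cone structure and the grading enter. The cokernel map in $(*)$ is restriction to $\{t=0\}$, so it annihilates every section of positive $t$-weight; its image is therefore the weight-zero ($t$-independent) part of $H^0(N_{X_C/\mathbb{P}^g}(-j))$, while any $t$-independent section restricts injectively. It thus suffices to lift each section of $N_{C/\mathbb{P}^{g-1}}(-j)$ to a $t$-independent section over the cone. Concretely, such a section is a homomorphism $I_C/I_C^2\to\mathcal{O}_C(-j)$, determined by the images of the generators $F_\ell$ subject to the relations holding among them; because $X_C$ is defined by the very same generators with the very same relations, assigning to each $F_\ell$ the $t$-independent lift of its image (equivalently, pulling the homomorphism back along the projection $X_C\setminus\{v\}\to C$) produces a homomorphism $I_{X_C}/I_{X_C}^2\to\mathcal{O}_{X_C}(-j)$ restricting to the original one. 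Said differently, a first-order embedded deformation $F_\ell\mapsto F_\ell+\epsilon G_\ell$ of $C$ with $t$-independent $G_\ell$ is automatically a deformation of the cone.

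I expect the main obstacle to be the behaviour at the singular points. Away from $v$ the projection from the vertex exhibits $X_C$ as smooth over $C$, hence a local complete intersection wherever $C$ is (for instance at a node, where $X_C$ is locally $\{xy=0\}\times\mathbb{A}^1$), so there $N_{X_C/\mathbb{P}^g}$ is locally free and both the exactness of $(*)$ and the identification $N_{X_C/\mathbb{P}^g}|_C\cong N_{C/\mathbb{P}^{g-1}}$ are unproblematic. The delicate points are (i) that $t$ be a non-zerodivisor on $N_{X_C/\mathbb{P}^g}$, i.e. that this sheaf have no associated point along $C$, and (ii) that the $t$-independent lift built on $X_C\setminus\{v\}$ extend across the isolated vertex. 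Point (ii) is the cleaner, since $v$ has codimension two in the surface $X_C$ and the lift is given by genuinely global $t$-independent data; point (i), together with the corresponding care needed for the normal sheaf of a possibly non-normal $C$, is where I would spend the most effort, reducing it to the reducedness of $X_C$ and the fact that $C$ contains no component or associated point of $X_C$. Once these local issues are settled, the weight grading renders the telescoping of $(*)$ canonical and the stated isomorphism follows.
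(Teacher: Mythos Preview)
The paper does not supply a proof of this lemma: it is quoted verbatim from \cite{CD}, Lemma~2.4, and used as a black box in the proof of Lemma~\ref{h1ts-ck3}. There is therefore no proof in the present paper to compare against.

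That said, your approach is essentially the standard one and matches what is done in \cite{CD} (and, earlier, in \cite{CLM}). There the argument is phrased algebraically rather than geometrically: the homogeneous coordinate ring of $X_C$ is $R_C[t]$ where $R_C$ is that of $C$, the ideal of $X_C$ is the extension of the ideal of $C$, and the graded normal module of $X_C$ splits as $N_C\otimes_k k[t]$; taking degree~$-i$ pieces gives exactly $\bigoplus_{k\ge i}[N_C]_{-k}$. Your $\mathbb{G}_m$-weight decomposition and the telescoping of the sequence~$(*)$ are the sheaf-theoretic translation of this, and your ``$t$-independent lift'' is precisely the statement that the normal module of the cone is obtained from that of $C$ by base change to $k[t]$.

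The points you flag as delicate are genuine but are dispatched in the cited reference by the algebraic formulation together with the fact that a (possibly nodal) canonical curve is arithmetically Cohen--Macaulay: this guarantees that the graded normal module computes $\bigoplus_m H^0(N_{C/\mathbb{P}^{g-1}}(m))$ on the nose, and likewise for the cone, so no separate analysis at the vertex or at the nodes is needed. Your concern~(i) about $t$ being a non-zerodivisor on the normal sheaf is automatic once one works with the graded module, since $t$ is a non-zerodivisor on $R_C[t]$ and $\mathrm{Hom}$ into a module inherits this. In short: correct approach, and the same one as in the source the paper cites.
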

\begin{lem}
\label{h1ts-ck3}
Let $(S,L)$ be a general primitively polarized K3 surface of genus $g$ and $C$ be a general curve in $|L|$. Then one has
\begin{equation}
h^1(T_S(-C))=\left\{\begin{array}{cc}
                      22-2g, & g \leq 9\\
                      3, & g=10 \\
                      0, & g=11 \hbox{ or } g \geq 13 \\
                      1, & g=12.
                    \end{array}\right.
\end{equation}
and
$h^1(T_S(-2C))=0$ for all $g \geq 7$.
\end{lem}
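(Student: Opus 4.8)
The plan is to treat the two statements by different methods: deformation theory for $h^1(T_S(-C))$, and a degeneration to a cone for the vanishing $h^1(T_S(-2C))=0$.

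For the first part I would start from the exact sequence $0 \to T_S(-C) \to T_S\langle C\rangle \to T_C \to 0$, which is (\ref{ixyty<x>}) of Theorem \ref{h1ty<x>} applied to the divisor $C\subset S$ (here the left-hand term is $T_S(-C)=\mathcal{I}_{C/S}\otimes T_S$). Since $g\geq 2$ one has $H^0(T_C)=0$, and $H^0(T_S\langle C\rangle)\subseteq H^0(T_S)=0$, so the long exact sequence gives $H^1(T_S(-C))\cong\ker\bigl(H^1(T_S\langle C\rangle)\to H^1(T_C)\bigr)$. Now $H^1(T_S\langle C\rangle)=T_{(S,C)}\mathcal{P}_g$ and $H^1(T_C)=T_{[C]}\mathcal{M}_g$, and the displayed map is exactly the differential $dc_g$. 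Thus $H^1(T_S(-C))=\ker dc_g$ at the general point $(S,C)$.

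I would then invoke generic smoothness: working over $\mathbb{C}$, with $\mathcal{P}_g$ smooth, the dominant morphism $c_g$ onto its image $\mathcal{K}_g$ is smooth over a dense open subset of $\mathcal{K}_g$. For $(S,C)$ general this forces $dc_g$ to be surjective onto $T_{[C]}\mathcal{K}_g$ with kernel of dimension $\dim\mathcal{P}_g-\dim\mathcal{K}_g=\dim F_g$; since $T_{[C]}\mathcal{K}_g\hookrightarrow T_{[C]}\mathcal{M}_g$ at the smooth point $[C]$ of $\mathcal{K}_g$, this kernel agrees with $\ker(dc_g\colon T\mathcal{P}_g\to T\mathcal{M}_g)$. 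Reading off $\dim F_g$ from Proposition \ref{dimFg} (equivalently, the codimensions of Theorem \ref{cg}) yields precisely the asserted case-by-case values of $h^1(T_S(-C))$. The delicate point here is that one genuinely gets equality, not merely $h^1(T_S(-C))\geq\dim F_g$: this is exactly what generic smoothness supplies, provided $(S,C)$ is general enough that $[C]$ is a smooth point of $\mathcal{K}_g$ and $c_g$ is smooth there.

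For $h^1(T_S(-2C))=0$ I would pass to the embedding $S\subset\mathbb{P}^g$ defined by $L$, so that $\mathcal{O}_S(1)=L$ and $\mathcal{O}_S(-2)=L^{-2}$. Combining the Euler sequence of $\mathbb{P}^g$ restricted to $S$ and twisted by $-2$ with the normal sequence $0\to T_S(-2)\to T_{\mathbb{P}^g}|_S(-2)\to N_{S/\mathbb{P}^g}(-2)\to 0$, the vanishings $h^1(L^{-1})=h^1(L^{-2})=0$ and the surjectivity of the multiplication map $H^0(L)^{\otimes 2}\to H^0(L^2)$ (projective normality, following Proposition \ref{saintdonat}) give $h^0(T_{\mathbb{P}^g}|_S(-2))=h^1(T_{\mathbb{P}^g}|_S(-2))=0$, whence $h^1(T_S(-2C))\cong H^0(N_{S/\mathbb{P}^g}(-2))$. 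I would then degenerate $S$ in its Hilbert scheme to the projective cone $X_C$ over the canonical model of a general hyperplane section $C\subset\mathbb{P}^{g-1}$ (which has the same Hilbert polynomial), so that semicontinuity yields $h^0(N_{S/\mathbb{P}^g}(-2))\leq h^0(N_{X_C/\mathbb{P}^g}(-2))$. By Lemma \ref{conoxc} the latter equals $\bigoplus_{k\geq 2}H^0(N_{C/\mathbb{P}^{g-1}}(-k))$, and by Lemma \ref{h0ncpgk3} every summand vanishes for $g\geq 7$; hence $H^0(N_{S/\mathbb{P}^g}(-2))=0$ and $h^1(T_S(-2C))=0$.

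I expect the cone step to be the main obstacle: one must justify that $X_C$ really is a flat limit of the embedded K3 surface so that semicontinuity applies, and check that the hypotheses of Lemmas \ref{conoxc} and \ref{h0ncpgk3} are met (in particular that $C$ is a general hyperplane section of a general primitively polarized K3, as those lemmas require). Note also that a naive vanishing theorem cannot replace this argument, since $T_S\cong\Omega^1_S$ and Akizuki--Nakano fails at $p=q=1$ on a surface; the cone degeneration is what makes the computation work.
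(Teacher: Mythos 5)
Your proposal is correct and follows the paper's proof almost exactly: the first half is the paper's argument that $H^1(T_S(-C))=\ker \mathrm{d}c_g$ is the tangent space to the (reduced) general fibre $F_g$, hence has dimension $\dim F_g$, and the second half uses the same normal-bundle sequence, the same flat degeneration of $S$ to the cone $X_C$ (justified, as in the paper, by the projective Cohen--Macaulayness of $S$, \cite{CD} Lemma 2.3) and the same two auxiliary lemmas. The one point where you genuinely diverge is the vanishing $h^1({T_{\mathbb{P}^g}}_{|S}(-2))=0$: you obtain it in a single step from the Euler sequence twisted by $\mathcal{O}_S(-2)$, using $h^1(L^{-1})=h^1(L^{-2})=0$ and the injectivity of $H^2(\mathcal{O}_S(-2))\rightarrow H^2(\mathcal{O}_S(-1))^{\oplus (g+1)}$, which by Serre duality is equivalent to degree-two projective normality of $S$; the paper instead first proves $h^1({T_{\mathbb{P}^g}}_{|S}(-1))=0$ by that same duality/projective-normality argument and then restricts to $C$, reducing the $(-2)$-twist to the surjectivity of $H^0(\omega_C)\otimes H^0(\omega_C)\rightarrow H^0(\omega_C^{\otimes 2})$, i.e.\ Noether's theorem. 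Your shortcut is valid and slightly cleaner, since it replaces Noether's theorem on the canonical curve by Mayer's projective normality statement, which the paper already invokes in the $(-1)$-twisted step anyway.
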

\begin{proof}
The cohomology sequence associated to the exact sequence $0 \rightarrow T_S(-C) \rightarrow T_S \langle C \rangle \rightarrow T_C \rightarrow 0$ writes as
$$0 \rightarrow H^{1}(T_S(-C)) \rightarrow H^{1}(T_S \langle C \rangle) \xrightarrow{\text{d} c_g} H^1(T_C) \rightarrow H^{2}(T_S(-C)) \rightarrow 0$$
where $\text{d}c_g$ is the tangent map to the morphism $c_g$ at $(S,C)$ (see \cite{B1}, Section 5). Since the fibre $F_g$ of $c_g$ over $[C]$ is reduced, one has $h^{1}(T_S(-C))=\dim F_g$. Theorem \ref{cg} then gives the first part of the statement.\\
Let $g \geq 7$. By the cohomology sequence associated to the exact sequence
$$0 \rightarrow T_S(-2) \rightarrow {T_{\mathbb{P}^{g}}}_{|S}(-2) \rightarrow N_{S/\mathbb{P}^{g}}(-2) \rightarrow 0$$
it is sufficient to show that $h^{0}(N_{S/\mathbb{P}^{g}}(-2))=h^{1}({T_{\mathbb{P}^{g}}}_{|S}(-2))=0$.\\
Let $X_C$ be as in Lemma \ref{conoxc}. Since $S$ is projectively Cohen-Macaulay, it flatly degene-rates to $X_C$ (see \cite{CD}, Lemma 2.3), thus one has $h^{0}(N_{S/\mathbb{P}^g}(-2)) \leq h^{0}(N_{X_C/\mathbb{P}^g}(-2))$ by upper semicontinuity. On the other hand $h^{0}(N_{X_C/\mathbb{P}^g}(-2))=0$ by Lemma \ref{conoxc} and Lemma \ref{h0ncpgk3}.\\
One has $H^{0}({T_{\mathbb{P}^{g}}}_{|S}(-2))=(0)$ by the Euler sequence restricted to $S$ and tensorized by $\mathcal{O}_S(-2)$. The same sequence tensorized by $\mathcal{O}_S(-1)$ gives
$$0 \rightarrow H^{0}(\mathcal{O}_S(-1)) \rightarrow H^{0}(\mathcal{O}_S)^{\oplus g+1} \rightarrow H^{0}(T_{\mathbb{P}^g|S}(-1)) \rightarrow H^1(\mathcal{O}_S(-1)) \rightarrow $$
\begin{equation}
\label{h0os-1h0osg+1}
\rightarrow H^1(\mathcal{O}_S)^{\oplus g+1} \rightarrow H^1(T_{\mathbb{P}^g|S}(-1)) \rightarrow H^2(\mathcal{O}_S(-1)) \xrightarrow{a} H^2(\mathcal{O}_S)^{\oplus g+1}.
\end{equation}
Now, $h^{1}(\mathcal{O}_S)=0$ and the map $a$ is injective by duality and the projective normality of $S$ (see \cite{MA}, Proposition 2), thus $h^1({T_{\mathbb{P}^{g}}}_{|S}(-1))=0$.\\
The short exact sequence $0 \rightarrow {T_{\mathbb{P}^{g}}}_{|S}(-2) \rightarrow {T_{\mathbb{P}^{g}}}_{|S}(-1) \rightarrow {T_{\mathbb{P}^{g}}}_{|C}(-1) \rightarrow 0$ then gives
$$0 \rightarrow H^{0}({T_{\mathbb{P}^{g}}}_{|S}(-1)) \rightarrow H^{0}({T_{\mathbb{P}^{g}}}_{|C}(-1)) \rightarrow H^{1}({T_{\mathbb{P}^{g}}}_{|S}(-2)) \rightarrow 0$$
and one has that $h^{1}({T_{\mathbb{P}^{g}}}_{|S}(-2))=0$ if and only if $H^{0}({T_{\mathbb{P}^{g}}}_{|S}(-1)) \cong H^{0}({T_{\mathbb{P}^{g}}}_{|C}(-1))$.\\
By sequence (\ref{h0os-1h0osg+1}), $h^{0}({T_{\mathbb{P}^{g}}}_{|S}(-1))=g+1$ holds. Consider the Euler sequence restricted to $C$ and tensorized by $\mathcal{O}_C(-1)$
$$0 \rightarrow \mathcal{O}_C(-1) \rightarrow H^{0}(\mathcal{O}_S(C))^{*} \otimes \mathcal{O}_C \rightarrow {T_{\mathbb{P}^{10}}}_{|C}(-1) \rightarrow 0.$$
One has that $h^{0}({T_{\mathbb{P}^{10}}}_{|C}(-1))=g+1$ if and only if the cohomology map $\alpha: H^{1}(\mathcal{O}_C(-1)) \rightarrow  H^{0}(\mathcal{O}_S(C))^{*} \otimes H^{1}(\mathcal{O}_C)$ is injective i.e. if and only if its dual
$$H^{0}(\mathcal{O}_S(C)) \otimes H^{0}(\omega_C) \cong \left(H^{0}(\omega_C) \oplus \mathbb{C}\right) \otimes H^{0}(\omega_C) \rightarrow H^{0}({\omega_C}^{\otimes 2})$$
is surjective. It is then sufficient to show that the cup-product map $H^{0}(\omega_C) \otimes H^{0}(\omega_C) \rightarrow H^{0}(\omega_C^{\otimes 2})$ is surjective. Since $(S,L)$ is general, $C$ is non-hyperelliptic, hence the surjectivity holds by Noether's theorem (see e.g. \cite{ACGH}, p. 117) and $h^{1}({T_{\mathbb{P}^{g}}}_{|S}(-2))=0$.\\
\end{proof}
\begin{oss}
\label{h0h1tpgs-2ggeq3}
Note that $h^{0}({T_{\mathbb{P}^{g}}}_{|S}(-2))=h^1({T_{\mathbb{P}^{g}}}_{|S}(-2))=0$ for every $g \geq 3$ (not only for $g \geq 7$).
\end{oss}
\begin{lem}
\label{h2tx-fk3}
Let $f:X \rightarrow \mathbb{P}^1$ be a general K3-type fibration of genus $g$ and let $F$ be a fibre. Then
\begin{equation}
h^2(T_X(-F))=\left\{\begin{array}{cc}
                      0, & g \leq 9 \hbox{ or }g=11 \\
                      1, & g=10 \\
                      3, & g=12 \\
                      2g-22, & g \geq 13
                    \end{array}\right.
\end{equation}
and $h^1(T_X(-2F))=0$ for all $g \geq 7$.
\end{lem}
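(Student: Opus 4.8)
The plan is to treat the two assertions separately. In both cases I would transfer the computation from the blown-up surface $X$ to the underlying K3 surface $S$, where Lemma \ref{h1ts-ck3} already supplies all the numerical input; the two cases are not symmetric, though, and I expect the vanishing $h^1(T_X(-2F))=0$ to be the genuinely hard one.

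For $h^2(T_X(-F))$ I would proceed as follows. For a general K3-type fibration the general fibre $F$ is the strict transform of a \emph{smooth} curve $C\in|L|$ passing simply through the base points $p_1,\dots,p_{2g-2}$, so the $p_i$ are smooth points of $C$ and Lemma \ref{h2ts-ch2tx-fh1ts-2ch1tx-2f} applies and gives $h^2(T_X(-F))=h^2(T_S(-C))$. It then remains to compute $h^2(T_S(-C))$, and for this I would reuse the exact sequence from the proof of Lemma \ref{h1ts-ck3},
\[
0\to H^1(T_S(-C))\to H^1(T_S\langle C\rangle)\xrightarrow{\mathrm{d}c_g}H^1(T_C)\to H^2(T_S(-C))\to 0,
\]
so that $H^2(T_S(-C))$ is the cokernel of the tangent map $\mathrm{d}c_g$. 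Since $h^1(T_C)=3g-3$ and $h^1(T_S\langle C\rangle)=19+g$, this yields $h^2(T_S(-C))=(3g-3)-(19+g)+h^1(T_S(-C))=(2g-22)+h^1(T_S(-C))$. Substituting the values of $h^1(T_S(-C))$ from Lemma \ref{h1ts-ck3} (equivalently, reading off $\mathrm{codim}_{\mathcal{M}_g}\mathcal{K}_g$ from (\ref{codimkg}), since the cokernel of $\mathrm{d}c_g$ measures exactly that codimension) produces the four cases in the statement.

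For the vanishing $h^1(T_X(-2F))=0$, $g\geq 7$, I would start from the tangent sequence (\ref{sequenzatangentisx}) twisted by $\mathcal{O}_X(-2F)$,
\[
0\to T_X(-2F)\to \epsilon^*T_S(-2F)\to \mathcal{O}_E(-E-2F)\to 0 .
\]
A direct restriction computation shows that $\mathcal{O}_E(-E-2F)$ restricts to $\mathcal{O}_{\mathbb{P}^1}(-1)$ on each exceptional component, hence has no cohomology at all; therefore $h^1(T_X(-2F))=h^1(\epsilon^*T_S(-2F))$. Writing $\epsilon^*T_S(-2F)\cong\epsilon^*(T_S(-2C))\otimes\mathcal{O}_X(2E)$ and feeding this into the Leray spectral sequence for $\epsilon$, the point where this case genuinely differs from the $-F$ case surfaces: unlike $R^1\epsilon_*\mathcal{O}_X(E)$, which vanishes by Proposition \ref{e*oxhe1ek}, the sheaf $R^1\epsilon_*\mathcal{O}_X(2E)$ is a nonzero length-$(2g-2)$ skyscraper on the base points, so the spectral sequence produces a correction term and
\[
h^1(T_X(-2F))=h^1(T_S(-2C))+\dim\ker\!\big(d_2\colon H^0\big(T_S(-2C)\otimes R^1\epsilon_*\mathcal{O}_X(2E)\big)\to H^2(T_S(-2C))\big).
\]
Since $h^1(T_S(-2C))=0$ by Lemma \ref{h1ts-ck3}, the entire statement reduces to the injectivity of the connecting map $d_2$.

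Proving that $d_2$ is injective is the main obstacle. Its source is $\bigoplus_i(T_S(-2C))|_{p_i}$, of dimension $2(2g-2)=4g-4$. By Serre duality on the K3 surface ($\omega_S\cong\mathcal{O}_S$ and $T_S\cong\Omega^1_S$) the map $d_2$ is dual to an evaluation map $H^0(\Omega^1_S\otimes L^2)\to\bigoplus_{i=1}^{2g-2}(\Omega^1_S\otimes L^2)|_{p_i}$, so injectivity of $d_2$ is equivalent to the base points imposing independent conditions on the sections of $\Omega^1_S\otimes L^2$. For a general K3-type fibration the $p_i$ are general points of $S$, and here $h^0(\Omega^1_S\otimes L^2)=h^2(T_S(-2C))=8g-28$ by Riemann--Roch (using $h^0=h^1=0$), which exceeds $4g-4$ exactly for $g\geq 7$; since $\Omega^1_S\otimes L^2$ is moreover globally generated for such $g$, a general-position argument should give the required surjectivity, hence $d_2$ injective and $h^1(T_X(-2F))=0$. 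As a cross-check one may instead compute $h^2(T_X(-2F))$ directly: injectivity of $d_2$ forces $h^2(T_X(-2F))=8g-28-(4g-4)=4g-24$. I expect the clean verification of the independent-conditions statement for $\Omega^1_S\otimes L^2$ at the general base points to be the delicate step to pin down rigorously.
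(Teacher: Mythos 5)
Your first computation is correct and is essentially the paper's argument: the paper obtains $h^2(T_S(-C))=2g-22+h^1(T_S(-C))$ from $\chi(T_S(-C))=2g-22$ rather than from the four-term exact sequence, but this is the same count, and the passage from $S$ to $X$ via Lemma \ref{h2ts-ch2tx-fh1ts-2ch1tx-2f} is identical.

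The second computation contains a genuine gap at its final step. Your reduction is sound up to the statement that $h^1(T_X(-2F))=0$ if and only if the $2g-2$ base points impose independent conditions on $H^0(\Omega^1_S\otimes L^{2})$; the blow-up/Leray bookkeeping and the count $h^0(\Omega^1_S\otimes L^{2})=8g-28\geq 4g-4$ are fine. But the base points of a pencil in $|L|$ are \emph{not} general points of $S$: they form the complete intersection $Z=C_1\cap C_2$ of two members, so as the pencil varies they move in a family of dimension $\dim\mathbb{G}(1,g)=2g-2$, far smaller than the $4g-4$ dimensions available to $2g-2$ genuinely general points, and no general-position argument applies. The independence of conditions is exactly where the content sits: tensoring the Koszul resolution $0\to L^{-2}\to L^{-1}\oplus L^{-1}\to\mathcal{I}_Z\to 0$ with $\Omega^1_S\otimes L^{2}$ and using $H^2(\Omega^1_S)=0$ together with $H^1(\Omega^1_S\otimes L^{2})\cong H^1(T_S(-2C))^{*}=0$ (Lemma \ref{h1ts-ck3}), one sees that your independent-conditions statement is equivalent to the surjectivity of the cup-product map $H^1(\Omega^1_S)\to H^1(\Omega^1_S\otimes L)^{\oplus 2}$ given by the two sections cutting out $C_1,C_2$. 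The target is $\bigl(H^1(T_S(-C))^{*}\bigr)^{\oplus 2}$, which is nonzero precisely for $7\leq g\leq 10$ and $g=12$, so in exactly those cases your argument leaves a nontrivial surjectivity unproved. The paper sidesteps this entirely by a deformation-theoretic argument: since $F_1\cdot F_2=0$ on $X$, one has $\mathrm{Def}^{\prime}_{(X,F_1,F_2)}(k[\epsilon])\cong H^1(T_X\langle F_1\cup F_2\rangle)$ and the ideal sheaf in (\ref{ixyty<x>}) is $\mathcal{O}_X(-2F)$, so $H^1(T_X(-2F))\cong\ker\alpha$ where $\alpha$ forgets everything but the two fibres; the equivalence of functors of Theorem \ref{equivalencexf1xf2sc1c2} identifies $\ker\alpha$ with $\ker\beta\subseteq H^1(T_S(-2C))$, which vanishes for $g\geq 7$ by Lemma \ref{h1ts-ck3}. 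To salvage your route you would have to prove the surjectivity above directly, which is essentially as hard as the statement you are trying to prove.
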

\begin{proof}
For the first part, one has $\chi(T_S(-C))=\chi(T_S)-\chi({T_S}_{|C})=2g-22$. From Lemma \ref{h1ts-ck3} one gets
$$h^2(T_S(-C))=\left\{\begin{array}{cc}
                      0, & g \leq 9 \hbox{ or } g=11\\
                      1, & g=10 \\
                      3, & g=12 \\
                      2g-22, & g \geq 13.
                    \end{array}\right.
$$
Notation as above, let $E_i \doteq \epsilon^{-1}(p_i)$ be the exceptional divisors of the blow-up $\epsilon:X \rightarrow S$ and $E \doteq \sum_i E_i$. From \cite{S}, Example 3.4.13 (iv) one has $N_{\epsilon} \cong \mathcal{O}_E(-E)$, thus there is an exact sequence
$$0 \rightarrow T_X(-F) \rightarrow \epsilon^{*}T_S(-F) \rightarrow \mathcal{O}_{E}(-E-F) \rightarrow 0$$
whose cohomology sequence gives $h^2(T_X(-F))=h^2(\epsilon^{*}T_S(-F))$.\\
By the definition of $F$ one has $\epsilon^{*}T_S(-F) \cong \epsilon^{*}(T_S(-C)) \otimes \mathcal{O}_X(E)$. From \cite{H}, V Proposition 3.4, it follows that $\epsilon_{*}\mathcal{O}_X(E) \cong \mathcal{O}_S$ and $R^1 \epsilon_{*}\mathcal{O}_X(E) =0$, hence the low-degree terms exact sequence associated to the Leray spectral sequence for the sheaf $\epsilon^{*}(T_S(-C)) \otimes \mathcal{O}_X(E)$ gives $h^2\left(\epsilon^{*}(T_S(-C)) \otimes \mathcal{O}_X(E)\right)=h^{2}(T_S(-C)).$\\
Let $C_1,C_2$ be general curves in $|L|$. Under the assumptions of Theorem \ref{equivalencexf1xf2sc1c2} one has $\text{im}(\alpha) \cong \text{im}(\beta)$ in the commutative diagram
\begin{equation}
\label{defxf1f2defsc1c2}
\xymatrix{0 \ar[r] & \ker \alpha \ar[r] & \text{Def}^{\prime}_{(X,F_1,F_2)}(k[\epsilon]) \ar[r]^{\alpha} \ar[d]^{\cong} & \text{Def}^{\prime}_{F_1}(k[\epsilon]) \oplus \text{Def}^{\prime}_{F_2}(k[\epsilon]) \ar[d]^{\cong} \\ 0 \ar[r] & \ker \beta \ar[r] & \text{Def}^{\prime}_{(S,C_1,C_2)}(k[\epsilon]) \ar[r]^{\beta} & \text{Def}^{\prime}_{C_1}(k[\epsilon]) \oplus \text{Def}^{\prime}_{C_2}(k[\epsilon])}
\end{equation}
thus $\ker \alpha \cong \ker \beta$.\\
Since $F_1 \cdot F_2=0$, one has $\text{Def}^{\prime}_{(X,F_1,F_2)}(k[\epsilon]) \cong H^{1}(T_X \langle F_1 \cup F_2 \rangle)$, $H^{1}(T_{F_1 \cup F_2 }) \cong H^{1}(T_{F_1}) \oplus H^{1}(T_{F_2})$ and the first row of diagram (\ref{defxf1f2defsc1c2}) coincides with the cohomology sequence of (\ref{ixyty<x>}) at the $H^{1}$-level
$$0 \rightarrow H^{1}(T_X(-2F)) \rightarrow H^{1}(T_X \langle F_1 \cup F_2 \rangle) \rightarrow H^{1}(T_{F_1 \cup F_2}).$$
In particular $\ker \alpha \cong H^{1}(T_X(-2F))$. Now consider the cohomology sequence
$$0 \rightarrow H^{1}(T_S(-2C)) \rightarrow H^{1}(T_S \langle C_1 \cup C_2 \rangle) \xrightarrow{\gamma} H^{1}(T_{C_1 \cup C_2}).$$
Since $\text{Def}^{\prime}_{(S,C_1,C_2)}(k[\epsilon]) \subset H^{1}(T_S \langle C_1 \cup C_2 \rangle)$ and $\beta = \gamma_{|\text{Def}^{\prime}_{(S,C_1,C_2)}(k[\epsilon])}$, one has $\ker \beta \subseteq H^{1}(T_S(-2C))=0$, where the last equality follows from Lemma \ref{h1ts-ck3}.
\end{proof}
In order to apply Proposition \ref{dimlocusvaigeq0}, we have to prove that, for a general K3-type fibration $f$ of genus $g$, the image of $\Psi_f$ is contained in the smooth locus on $\overline{M}_g$. The argument in the proof involves hyperelliptic K3 surfaces.
\begin{teo}[\cite{SA}]
Let $S$ be a smooth K3 surface and $L \in \text{Pic}(S)$ a globally generated line bundle such that $L^2 > 0$. Let $\phi \doteq \phi_{|L|}:S \rightarrow \mathbb{P}^g$ be the associated morphism, where $L^2=2g-2$. Then there are only two possibilities:
\begin{itemize}
\item[(i)] if $|L|$ contains a nonhyperelliptic curve, then $\phi$ is birational onto a surface of degree $2g-2$;
\item[(ii)] if $|L|$ contains a hyperelliptic curve, then $\phi$ is a generically 2-to-1 mapping of $S$ onto a surface $W$ of degree $g - 1$. The surface $W$ is one among $\mathbb{P}^2$ and the Hirzebruch surfaces $\mathbb{F}_n$ with $n=0,1,2,3,4$. The branching curve belongs to the linear system $|-2K_W|$.
\end{itemize}
\end{teo}
In the second case, $|L|$ will be called a \emph{hyperelliptic linear system} and $S$ a \emph{hyperelliptic K3 surface} (\emph{of genus $g$}).\\
The surfaces $\mathbb{F}_n$ are the projective bundles $\mathbb{P}_{\mathbb{P}^1}\left(\mathcal{O}_{\mathbb{P}^1}(n) \oplus \mathcal{O}_{\mathbb{P}^1}\right)$. Let $H$ be the class of the tautological line bundle $\mathcal{O}_{\mathbb{F}_n}(1)$ in $\text{Pic}(\mathbb{F}_n)$, let $F$ be a fibre of the projection over $\mathbb{P}^1$ and, for $n \geq 1$, let $B$ be the unique irreducible curve on $\mathbb{F}_n$ with negative self-intersection. One has $H^2=n$, $F^2=0$, $H \cdot F=1$ and $B^2=-n$. The canonical classes are $K_{\mathbb{F}_0}=-2F-2H$ and $K_{\mathbb{F}_n}=-(n+2)F-2B$, $n \geq 1$.\\
If $S$ is a double cover of $F=\mathbb{F}_n$, the hyperelliptic linear system is
$$|L_n| \doteq \left\{\begin{array}{cc}
                      \left|\phi^{*}\left(\left(\frac{n}{2}+\frac{g-1}{2}\right)F+B\right)\right|, & n \neq 0, \\
                      \left|\phi^{*}\left(\left(\frac{g-1}{2}\right)F+H\right)\right|, & n=0
                    \end{array}\right.$$
and the $g^1_2$ is cut out by the elliptic pencil $|E|$ where $E \doteq \phi^{*}(F)$.\\
The pairs $(S,L_n)$, with $S \rightarrow \mathbb{F}_n$ as above, define a substack $\mathcal{A}_{g,n} \subset \mathcal{F}_g$ and the union of these is just the \emph{hyperelliptic substack} of $\mathcal{F}_g$ i.e. the substack parameterizing pairs $(S,L)$ such that $S$ is a hyperelliptic surface and $|L|$ is the hyperelliptic linear system.\\
It turns out that in a certain sense the double covers of $\mathbb{F}_2$ and $\mathbb{F}_3$ are degenerate cases of double covers of $\mathbb{F}_0$ and $\mathbb{F}_1$ (see \cite{RE}, Theorem 3.5 for a precise statement).\\
As a consequence, if $g$ is even, the hyperelliptic substack consists of the single component $\mathcal{A}_{g,1}$, whereas if $g$ is odd it has two components $\mathcal{A}_{g,0}$ and $\mathcal{A}_{g,4}$. These components are in all cases of codimension 1 in $\mathcal{F}_g$ i.e. they are 18-dimensional.\\
Define $\mathcal{D}_{g,n} \subset \mathcal{P}_g$ as the inverse image of $\mathcal{A}_{g,n}$ by the projection $\pi_g$. $\mathcal{D}_{g,1}$, $\mathcal{D}_{g,0}$ and $\mathcal{D}_{g,4}$ are (18+g)-dimensional substacks of $\mathcal{P}_g$.
\begin{prop}
\label{generalfk3fibresnoautomorphisms}
Let $f$ be a general K3-type fibration genus $g$. Then the image of $\Psi_f$ is contained in the smooth locus of $\overline{M}_g$.
\end{prop}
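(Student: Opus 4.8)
The plan is to reduce the statement first to a purely automorphism-theoretic one, and then to a codimension estimate inside $|L|$. Recall that the coarse space $\overline{M}_g$ agrees \'etale-locally with the smooth stack $\overline{\mathcal{M}}_g$ at any point whose curve has trivial automorphism group; hence $[C]$ is a smooth point of $\overline{M}_g$ as soon as $\mathrm{Aut}(C)=1$. So it suffices to prove that for a general K3-type fibration $f$ \emph{every} fibre $C$ satisfies $\mathrm{Aut}(C)=1$. For general $f$ the pencil $\Lambda$ is a Lefschetz pencil, so its members are either smooth curves in $|L|$ or irreducible curves with a single node; in the latter case $\mathrm{Aut}(C)$ is the group of automorphisms of the normalization (a genus $g-1$ curve) fixing the two preimages of the node, so a general such member is again automorphism-free once $g\ge 3$.

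First I would set $\Sigma\subset|L|$ to be the closure of the locus of curves (smooth or singular) carrying a nontrivial automorphism, and observe that the Proposition follows if $\mathrm{codim}_{|L|}\Sigma\ge 2$: a general line $\Lambda\subset|L|\cong\mathbb{P}^g$ is then disjoint from $\Sigma$, so that neither the smooth members nor the finitely many nodal members — which correspond to the points where $\Lambda$ meets the discriminant divisor $\Delta\subset|L|$, and are therefore general points of $\Delta$ — carry a nontrivial automorphism. Note also that for $(S,L)$ general in $\mathcal{F}_g$ the surface $S$ is \emph{not} a hyperelliptic K3 surface (these form a codimension-one substack, by the discussion following Reid's theorem), so every member of $|L|$ is a non-hyperelliptic curve. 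This is the crucial simplification: it excludes the hyperelliptic locus, which is the largest ($(2g-1)$-dimensional) component of the automorphism locus in $M_g$, leaving only components of dimension at most $2g-2$ (bielliptic curves being the biggest).

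The core step is then the dimension count giving $\mathrm{codim}_{|L|}\Sigma\ge2$, which I would carry out on the moduli stack. Inside $\mathcal{P}_g$ (of dimension $19+g$) let $\mathcal{P}_g^{\mathrm{aut}}$ be the locus of pairs $(S,C)$ with $C$ non-hyperelliptic and $\mathrm{Aut}(C)\ne1$. Since $\Sigma$ is the fibre of $\mathcal{P}_g^{\mathrm{aut}}\to\mathcal{F}_g$ over a general $(S,L)$ and $\dim\mathcal{F}_g=19$, the bound $\mathrm{codim}_{|L|}\Sigma\ge2$ is equivalent to $\dim\mathcal{P}_g^{\mathrm{aut}}\le 17+g$. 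Mapping by $c_g$ to $\overline{\mathcal{M}}_g$, the image lies in the intersection of $\overline{\mathcal{K}}_g$ with the non-hyperelliptic automorphism locus $M_g^{\mathrm{aut,nh}}$ (dimension $\le 2g-2$), while the fibres of $c_g$ over a general point of the image have dimension $\dim F_g=a_g$. Combining $\dim\mathcal{P}_g^{\mathrm{aut}}\le(2g-2)+a_g$ with the values of $a_g$ and $b_g=\mathrm{codim}_{\mathcal{M}_g}\mathcal{K}_g$ from Theorem \ref{cg} yields $\dim\mathcal{P}_g^{\mathrm{aut}}\le 17+g$ in all the cases where $c_g$ is dominant or has positive-dimensional fibres.

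The hard part will be the cases of large $g$, together with controlling the \emph{jumps} of the fibre dimension of $c_g$ over the automorphism locus. When $c_g$ is generically finite ($a_g=0$, i.e.\ $g=11$ or $g\ge13$) the naive product estimate is no longer adequate, and the inequality reduces to showing that $\mathcal{K}_g\cap M_g^{\mathrm{aut,nh}}$ has codimension $\ge2$ in $\mathcal{K}_g$. Since $M_g^{\mathrm{aut,nh}}$ has codimension $\ge g-1$ in $M_g$ while $\mathcal{K}_g$ has codimension $b_g=2g-22$, a proper (dimensionally transverse) intersection would give codimension $g-1\ge 2$ inside $\mathcal{K}_g$; I would therefore verify properness by comparing $g-1$ with $b_g$ via Theorem \ref{cg} and ruling out, by upper semicontinuity of fibre dimension, an anomalous containment of a whole automorphism component in $\mathcal{K}_g$. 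Finally, the genericity of $f$ — together with the fact recorded earlier that a small deformation of a K3-type fibration is again a K3-type fibration — guarantees that a general K3-type fibration realises this generic, automorphism-free behaviour, which completes the argument.
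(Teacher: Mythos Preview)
Your reduction to an automorphism question and the observation that for general $(S,L)$ every member of $|L|$ is non-hyperelliptic are both sound. The genuine gap is in the dimension count, and it is exactly the point you flag as ``the hard part'': you never control the fibre dimension of $c_g$ over the automorphism locus. Your inequality $\dim\mathcal{P}_g^{\mathrm{aut}}\le(2g-2)+a_g$ uses the \emph{generic} fibre dimension $a_g$, but $c_g$ can (and in practice does) have larger fibres over special curves. Even granting the inequality, for $g\ge 20$ one has $(2g-2)+0>17+g$, so the naive count fails outright; and your proposed rescue---checking that $\mathcal{K}_g\cap M_g^{\mathrm{aut,nh}}$ has codimension $\ge 2$ in $\mathcal{K}_g$ by ``transversality'' and ``ruling out anomalous containment by upper semicontinuity''---is not an argument: nothing you have written prevents a whole component of the bielliptic or cyclic-cover locus from sitting inside $\mathcal{K}_g$, nor prevents $c_g$ from acquiring positive-dimensional fibres precisely over that locus. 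The latter is the real obstruction, since even if the intersection in $\mathcal{M}_g$ is proper, $c_g^{-1}(M_g^{\mathrm{aut,nh}})$ could still be too large.

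The paper's proof works around this by abandoning the general K3 altogether and \emph{specialising} to a hyperelliptic K3 (a double cover of $\mathbb{F}_0$ or $\mathbb{F}_1$). On such a surface every curve in $|L|$ is hyperelliptic, so the relevant locus is the $g$-dimensional stratum $\mathcal{L}_g$ of hyperelliptic curves with extra involutions. The fibre dimension of $c_g$ over that stratum is then bounded \emph{explicitly}: using the elliptic pencil $|E|$ on $S$ and a direct computation with $\Omega^1_{S/\mathbb{P}^1}(C)$ one shows $h^1(T_S(-C))\le 16$ for every $C\in|L|$, which is precisely the missing ``jump control''. This gives codimension $\ge 2$ for the bad locus in $|L|$ on the hyperelliptic K3, and the result for the general K3 follows by deforming the pencil. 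Your approach, as it stands, is missing an analogous concrete bound on $h^1(T_S(-C))$ (or, equivalently, on the fibre of $c_g$) over curves with automorphisms.
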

\begin{proof}
For low genera the statement can be proved with a straightforward adaptation of the proof of \cite{CU}, Lemma 3.4, which refers to $g=10$. If the morphism $c_g$ is dominant, which happens for $3 \leq g \leq 9$ and $g=11$, then the statement follows from the fact that there are no irreducible components of the singular locus of $\overline{M}_g$ of codimension 1 (see \cite{HMO}, pp. 53-54). Hence we can assume the result for $g \leq 11$.\\
Let $S$ be a general hyperelliptic K3 surface of genus $g$ and let $|L|$ be the hyperelliptic linear system. It is sufficient to prove that there is a linear pencil $\mathcal{P} \subset |L|$ which consists entirely of curves having no automorphisms apart from the hyperelliptic involution. Indeed, if this is the case, the polarized surface $(S,L)$ can be deformed to a polarized non-hyperelliptic surface $(S',L')$ having a pencil $\mathcal{P'} \subset |L'|$ consisting entirely of curves without automorphisms.\\
Let $\mathcal{H}_g \subset \mathcal{M}_g$ be the hyperelliptic locus and let $\mathcal{L}_g \subset \mathcal{H}_g$ be the locus consisting of hyperelliptic curves with extra automorphisms.
Let $k$ be the dimension of the locus in $|L|$ parameterizing smooth hyperelliptic curves with extra automorphisms. We want to show that $k \leq g-2$, so that the general pencil $\mathcal{P} \subset |L|$ does not contain smooth curves with extra automorphisms.\\
Let $g$ be odd, let $\mathcal{D}_{g,0}$ as in the definition above, and let $\mathcal{D}'_{g,0} \subset \mathcal{D}_{g,0}$ be the sublocus parameterizing pairs $(S,C)$ such that $C$ is smooth and has extra automorphisms. Since $\mathcal{L}_g$ has dimension $g$ (see \cite{GS}), each fibre $c_g^{-1}([C])$ of the map $c_g:\mathcal{D}'_{g,0} \rightarrow \overline{\mathcal{M}}_g$ has dimension at least $18+k-g$.\\
Let $(S,L)$ be a general hyperelliptic K3 surface in $\mathcal{A}_{g,0}$, let $\phi \doteq \phi_{|L|}:S \rightarrow \mathbb{F}_0 = \mathbb{P}^1 \times \mathbb{P}^1$ be the double cover ramified over a curve $G \in |4F+4H|$ and let $C \in |L|$. We want to show that $c_g^{-1}([C])$ has dimension at most $16$, so that $18+k-g \leq 16$ i.e. $k \leq g-2$.\\
One has $\dim c_g^{-1}([C]) \leq h^{1}(T_S(-C))$ since the tangent space to $c_g^{-1}([C])$ at the point $(S,C)$ is $H^{1}(T_S(-C))$.
Let $E \doteq \phi^{*}F$. The elliptic pencil $|E|$ defines a rational fibration $\varphi \doteq \varphi_{|E|}:S \rightarrow \mathbb{P}^1$. Consider the short exact sequence
$$0 \rightarrow \varphi^{*}\omega_{\mathbb{P}^1}(C) \rightarrow \Omega^{1}_S(C) \rightarrow \Omega^1_{S/\mathbb{P}^1}(C) \rightarrow 0.$$
One has $\varphi^{*}\omega_{\mathbb{P}^1}(C) \cong \mathcal{O}_S(C-2E)$ and $C-2E$ is linearly equivalent to $\frac{g-5}{2}E+\phi^{*}H$, which is big and nef (for $g \geq 7$), hence by the Kawamata-Viehweg vanishing theorem one has the exact sequence
\begin{equation}
\label{h0varphiomegasp1c}
0 \rightarrow H^{0}(\varphi^{*}\omega_{\mathbb{P}^1}(C)) \rightarrow H^{0}(\Omega^{1}_S(C)) \rightarrow H^{0}(\Omega^1_{S/\mathbb{P}^1}(C)) \rightarrow 0
\end{equation}
and $h^{0}(\varphi^{*}\omega_{\mathbb{P}^1}(C))=\chi(\mathcal{O}_S(C-2E))=g-3$.\\
Since $|F|$ has no singular curves, the singular curves of $|E|$ are pull-backs by $\phi$ of curves in $|F|$ which are tangents to the branch divisor $G$. Since $G$ is general in its linear system, the tangencies are simple, thus the singularities of the fibration $\varphi$ are nodal. We then have a short exact sequence
\begin{equation}
\label{omega1sp1comegasp1c}
0 \rightarrow \Omega^1_{S/\mathbb{P}^1}(C) \rightarrow \omega_{S/\mathbb{P}^1}(C) \rightarrow N \rightarrow 0
\end{equation}
where $N$ is a torsion sheaf supported on the set of $24=12\chi(\mathcal{O}_S)-K^2_S$ singular points $q_1,...,q_{24}$ of the singular fibres of $\varphi$ and having stalk $\mathbb{C}$ at each of them. Sequence (\ref{omega1sp1comegasp1c}) gives
\begin{equation}
\label{omega1sp1ccongiq1q24}
\Omega^1_{S/\mathbb{P}^1}(C) \cong \mathcal{I}_{q_1,...,q_{24}} \otimes \omega_{S/\mathbb{P}^1}(C).
\end{equation}
By equality (\ref{relativecanonicalsheaf}) one has $\omega_{S/\mathbb{P}^1}(C) \cong \omega_S(C) \otimes \varphi^{*}\omega^{-1}_{\mathbb{P}^1} \cong \mathcal{O}_S(C+2E)$. One can again apply Kawamata-Viehweg on $\mathcal{O}_S(C+2E)$ and obtain $h^{0}(\mathcal{O}_S(C+2E))=\chi(\mathcal{O}_S(C+2E))=g+5$.\\
Let $i=\min \left\{\frac{g+3}{2}+1,24\right\}$. Since $G$ is general, there is a reducible curve $A \in \left|\frac{g+3}{2}F+H\right| \subset \mathbb{P}^1 \times \mathbb{P}^1$, of the form $F_1+F_2+ \dots +F_{\frac{g+3}{2}}+H$, which separates the points $\phi(q_1),...,\phi(q_i)$, thus the curve $\phi^{*}A \in |C+2E|$ separates the points $q_1,...,q_i$ and so $q_1,...,q_{24}$ impose at least $i$ conditions to the linear system $|C+2E|$. In particular for $g \geq 11$ they impose at least $8$ conditions, hence isomorphism (\ref{omega1sp1ccongiq1q24}) gives $h^{0}(\Omega^1_{S/\mathbb{P}^1}(C)) \leq h^{0}(\omega_{S/\mathbb{P}^1}(C))-8=g-3$. Sequence (\ref{h0varphiomegasp1c}) then gives $h^{0}(\Omega^1_S(C)) = h^{0}(\varphi^{*}\omega_{\mathbb{P}^1}(C))+h^{0}(\Omega^1_{S/\mathbb{P}^1}(C)) \leq 2g-6$. Serre duality gives $h^{0}(\Omega^1_S(C))=h^{2}(T_S(-C))$. Since $\chi(T_S(-C))=h^{2}(T_S(-C))-h^{1}(T_S(-C))=2g-22$, one gets $h^{1}(T_S(-C)) \leq 16$ as we wanted.\\
A similar argument yields the same property if $g$ is even. In this case the role of $A$ is played by a reducible curve of the form $F_1+F_2+ \dots + F_{\frac{g}{2}+2}+B \in \left|\left(\frac{g}{2}+2\right)F+B\right|$ on $\mathbb{F}_1$. In this way we have proved that a general pencil $\mathcal{P}$ on a general hyperelliptic K3 surface does not contain smooth curves having extra automorphisms apart from the hyperelliptic involution. We have to prove that the same is true for the singular curves in $\mathcal{P}$.\\
Note that singular curves in the linear system $\left|\frac{g-1}{2}F+H\right|$ (if $g$ is odd) or $\left|\frac{g}{2}F+B\right|$ (is $g$ is even) are all reducible,
hence singular irreducible curves in $|L|$ are pull-backs of curves which are tangent to the branch divisor $G$. Since $G$ is general, the general tangent curve to $G$ has only one (simple) tangency point, thus singular irreducible curves in $\mathcal{P}$ must have one ordinary node and no other singularities.
An automorphism of such a curve can be seen as an automorphism of its desingularization, which is a smooth curve of genus $g-1$, fixing (as a set) the inverse images of the singular point. Hence the locus of irreducible $1$-nodal hyperelliptic curves with extra automorphisms has dimension less than or equal to $\dim (\mathcal{L}_{g-1})=g-1$.
However, $\mathcal{P}$ could contain also hyperelliptic reducible curves which are pull-backs of connected reducible curves. Fix a (general) hyperelliptic reducible curve $C$ of this kind. One has $C = C_1 \cup C_2 \cup ... \cup C_l$,
where $C_i$ are irreducible hyperelliptic curves. Since $C$ is general, one has $C_i \neq C_j$ for $i \neq j$, thus an automorphism of $C$ is a $l$-tuple $(\gamma_1,...,\gamma_l)$, where $\gamma_i$ is an automorphism of $C_i$ fixing (as a set) the intersection points of $C_i$ with each other component $C_j$. Since $C$ is the pull-back of a connected curve, each component $C_i$ has to intersect at least one other component in at least two points. Since $p_a(C)=g$ one has that $\sum_i g(C_i) \leq g-1$. Using this fact, one finds that the locus of reducible hyperelliptic curves with extra automorphisms has dimension less than or equal to $g-1$.
In conclusion, if $k$ is the dimension of the locus in $|L|$ parameterizing singular hyperelliptic curves with extra-automorphisms, applying the same argument of the smooth case, one finds again $k \leq g-2$, hence singular curves in $\mathcal{P}$ have no extra automorphisms.
This completes the proof.
\end{proof}
Let $f$ be a K3-type fibration of genus $g$. It is easy to see that the number of negative summands in the splitting of $Ext^1_f(\Omega^1_{X/\mathbb{P}^1},\mathcal{O}_X)$ is greater than or equal to the codimension of the locus $\mathcal{K}_g$ in $\mathcal{M}_g$.\\
For $g \leq 9$ and $g=11$ the statement is obvious as the map $c_g$ is dominant.\\
For the remaining cases consider the exact sequence (\ref{sequenzanormale2tensorizzata}) with $k=-1$:
\begin{equation}
\label{sequenzanormale2tensorizzatacaso-1}
0 \rightarrow N_{\Psi_f}(-1) \rightarrow R^1 f_{*}T_X(-F) \rightarrow R^1 f_{*}\mathcal{O}_X \otimes \mathcal{O}_{\mathbb{P}^1}(1) \rightarrow 0.
\end{equation}
By (\ref{tp1tensorr1fstarox}) one has $h^1(R^1 f_{*}\mathcal{O}_X \otimes \mathcal{O}_{\mathbb{P}^1}(1))=0$, from which
\begin{equation}
h^1(N_{\Psi_f}(-1)) \geq h^1(R^1 f_{*}T_X(-F))=h^2(T_X(-F)).
\end{equation}
From Lemma \ref{h2tx-fk3} one has
\begin{equation}
\label{h1npsifgeq}
h^1(N_{\Psi_f}(-1)) \geq \left\{\begin{array}{cc}
                      1, & g=10 \\
                      3, & g=12 \\
                      2g-22, & g \geq 13.
                    \end{array}\right.
\end{equation}
On the other hand, by Proposition \ref{h0h1ext1f} all the negative $a_i$ equal -1, hence by (\ref{npsif-1ext1f-1}) the number $h^1(N_{\Psi_f}(-1))$ is exactly the number of negative $a_i$, while the values in the right term are exactly the codimensions of $\mathcal{K}_g$ in $\mathcal{M}_g$, as given by (\ref{codimkg}).\\
Although deformations of $f$ are unobstructed, the number of negative summands in the splitting of $Ext^1_f(\Omega^1_{X/\mathbb{P}^1},\mathcal{O}_X)$ could be strictly greater than the codimension of the locus $\mathcal{K}_g$ in $\mathcal{M}_g$. The following proposition proves that this is not the case if $f$ is a \emph{general} K3-type fibration:
\begin{prop}
\label{equalsthecodimensionofkg}
Let $f$ be a general K3-type fibration of genus $g$. Then the number of negative summands in the splitting of $Ext^1_f(\Omega^1_{X/\mathbb{P}^1},\mathcal{O}_X)$ equals the codimension of the locus $\mathcal{K}_g$ in $\mathcal{M}_g$.
\end{prop}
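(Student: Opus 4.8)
The plan is to combine Kollár's dimension formula (Proposition \ref{dimlocusvaigeq0}) with the geometric description of the locus swept out by the modular images of general K3-type fibrations, rather than to estimate cohomology directly. We already know from Proposition \ref{h0h1ext1f} that $h^{1}(Ext^1_f(\Omega^1_{X/\mathbb{P}^1},\mathcal{O}_X))=0$, so every summand $\mathcal{O}_{\mathbb{P}^1}(a_i)$ in the splitting satisfies $a_i\geq -1$ and all the negative summands are equal to $-1$. Writing $n_{-1}$ for their number, the content of the statement is the equality $n_{-1}=\text{codim}_{\mathcal{M}_g}(\mathcal{K}_g)$. The inequality $n_{-1}\geq \text{codim}_{\mathcal{M}_g}(\mathcal{K}_g)$ has just been established in the paragraph preceding the statement, so the remaining task is the opposite bound; in fact the argument below produces the equality directly.

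First I would set up the ambient space of rational curves. By the isomorphism (\ref{ext1ftmg}) one has $\Psi^{*}_f T_{\overline{M}_g}\cong Ext^1_f(\Omega^1_{X/\mathbb{P}^1},\mathcal{O}_X)$, so the integers $a_i$ are precisely the splitting type of $\Psi_f$ viewed as a morphism $\mathbb{P}^1\to \overline{M}_g$. Let $V\subset \text{Mor}(\mathbb{P}^1,\overline{M}_g)$ be the irreducible component containing $[\Psi_f]$ for $f$ general. Since the general K3-type fibrations form an irreducible family (an open subset of a Grassmann bundle over the irreducible stack $\mathcal{F}_g$), the points $[\Psi_f]$ are general in a single such $V$, and by Proposition \ref{aiindipendentidallacirr} the $a_i$ are constant along $V$ for the general member. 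By Proposition \ref{generalfk3fibresnoautomorphisms} the image of $\Psi_f$ lies in the smooth locus of $\overline{M}_g$, so the general member of $V$ satisfies the hypotheses of Proposition \ref{dimlocusvaigeq0}, which yields
$$\dim \text{Locus}(V)=\#\{i\mid a_i\geq 0\}=(3g-3)-n_{-1}.$$

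The key geometric step is to identify $\dim\text{Locus}(V)$ with $\dim\mathcal{K}_g=(3g-3)-\text{codim}_{\mathcal{M}_g}(\mathcal{K}_g)$. On one hand every fibre of a K3-type fibration is a curve lying on a K3 surface, so $\Psi_f(\mathbb{P}^1)\subseteq \overline{\mathcal{K}}_g$; as $\overline{\mathcal{K}}_g$ is closed and the general member of $V$ is again a K3-type fibration, this forces $\text{Locus}(V)\subseteq \overline{\mathcal{K}}_g$ and hence $\dim\text{Locus}(V)\leq \dim\mathcal{K}_g$. On the other hand, a general point $[C]\in\mathcal{K}_g$ is represented by a smooth curve $C$ on a general polarized K3 surface $(S,L)\in\mathcal{F}_g$; choosing a general pencil $\Lambda\subset|L|$ through $C$ produces a general K3-type fibration having $C$ as a fibre, whence $[C]\in\Psi_f(\mathbb{P}^1)\subseteq\text{Locus}(V)$. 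Thus $\text{Locus}(V)$ is dense in $\overline{\mathcal{K}}_g$ and the two dimensions agree. Comparing with the displayed formula gives $(3g-3)-n_{-1}=(3g-3)-\text{codim}_{\mathcal{M}_g}(\mathcal{K}_g)$, that is $n_{-1}=\text{codim}_{\mathcal{M}_g}(\mathcal{K}_g)$, the explicit values being those of (\ref{codimkg}).

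The main obstacle is precisely this dimension identification: one must ensure that the component $V$ sweeps out a \emph{dense} subset of $\overline{\mathcal{K}}_g$ and nothing of larger dimension. The containment $\text{Locus}(V)\subseteq\overline{\mathcal{K}}_g$ is formal, so the delicate half is the surjectivity onto a dense open part of $\mathcal{K}_g$; this rests on the irreducibility of the family of general K3-type fibrations, on the elementary fact that a general K3-curve moves in a pencil of the required type, and on the stability of the exceptional divisors (Example \ref{divisorieccezionalistabili}) guaranteeing that all such pencils give points of the \emph{same} component $V$. A secondary, routine point is to confirm that the general member of $V$ — not merely $\Psi_f$ itself — has image in the smooth locus and splitting type with all $a_i\geq -1$, so that Proposition \ref{dimlocusvaigeq0} genuinely applies along $V$.
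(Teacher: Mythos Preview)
Your proof is correct and follows essentially the same route as the paper's: use Proposition \ref{generalfk3fibresnoautomorphisms} to place the image in the smooth locus of $\overline{M}_g$, identify $\Psi_f^{*}T_{\overline{M}_g}$ with $Ext^1_f(\Omega^1_{X/\mathbb{P}^1},\mathcal{O}_X)$, use $H^{1}=0$ from Proposition \ref{h0h1ext1f} to get $a_i\geq -1$ and to make $[\Psi_f]$ a general (indeed smooth) point of its component $V$, and then invoke Proposition \ref{dimlocusvaigeq0}. The paper simply asserts $\dim\text{Locus}(V)=\dim\mathcal{K}_g$ at the end, whereas you spell out both inclusions; your argument is in fact a bit more uniform, since you do not split off the cases $g\leq 9$ and $g=11$ separately.

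Two small points of presentation. First, the identification (\ref{ext1ftmg}) is stated for the stack; the passage to the coarse moduli space is exactly where Proposition \ref{generalfk3fibresnoautomorphisms} enters, so it is worth saying explicitly that smoothness of $\overline{M}_g$ along the image is what makes $\Psi_f^{*}T_{\overline{M}_g}\cong Ext^1_f(\Omega^1_{X/\mathbb{P}^1},\mathcal{O}_X)$, not merely what makes Proposition \ref{dimlocusvaigeq0} applicable. Second, when you write ``the general member of $V$ is again a K3-type fibration'', the correct statement is that it is the modular morphism of one; the underlying reason this holds is the unobstructedness of $\text{Def}_f$ together with the stability of the exceptional divisors, which you cite.
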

\begin{proof}
As has just been pointed out, the codimension of $\mathcal{K}_g$ in $\mathcal{M}_g$ is $0$ for $g \leq 9$ or $g=11$. By Proposition \ref{generalk3typefibrationfreeiff} $f$ is free for these genera.\\
For the remaining cases, by Proposition \ref{generalfk3fibresnoautomorphisms} one has that $\overline{M}_g$ is smooth along the image of $\Psi_f$ for $f$ general, thus $\Psi_f^{*}T_{\overline{M}_g}$ is locally free and isomorphic to $Ext^1_f(\Omega^1_{X/\mathbb{P}^1},\mathcal{O}_X)$.\\
Let $V \subset \text{Mor}(\mathbb{P}^1,\overline{M}_g)$ be the irreducible component containing the point $[\Psi_f]$.
By Proposition \ref{h0h1ext1f} one has $h^{1}(\Psi^{*}_f T_{\overline{M}_g})=0$, thus Proposition \ref{dimlocusvaigeq0} can be applied. Since $\dim \hbox{ Locus}(V)=\dim \mathcal{K}_g$
the statement follows.
\end{proof}
We now have all the information to explicitly write down the splitting of the locally free sheaves $Ext^1_f(\Omega^1_{X/\mathbb{P}^1},\mathcal{O}_X)$ and $N_{\Psi_f} \doteq \bigoplus_{i=1}^{3g-4}\mathcal{O}_{\mathbb{P}^1}(b_i)$ for a general K3-type fibration $f$.
\begin{teo}
\label{decomposizioneext1fk3}
Let $f$ be a general K3-type fibration of genus $g \geq 7$. Then
$$Ext^1_f(\Omega^1_{X/\mathbb{P}^1},\mathcal{O}_X) \cong \left\{\begin{array}{lc}
\mathcal{O}_{\mathbb{P}^1}(2) \oplus \mathcal{O}_{\mathbb{P}^1}(1)^{\oplus 21-g} \oplus \mathcal{O}_{\mathbb{P}^1}^{\oplus 4g-25}, & 7 \leq g \leq 9\\
\mathcal{O}_{\mathbb{P}^1}(2) \oplus \mathcal{O}_{\mathbb{P}^1}(1)^{\oplus 12} \oplus \mathcal{O}_{\mathbb{P}^1}^{\oplus 13} \oplus \mathcal{O}_{\mathbb{P}^1}(-1), & g=10\\
\mathcal{O}_{\mathbb{P}^1}(2) \oplus \mathcal{O}_{\mathbb{P}^1}(1)^{\oplus 10} \oplus \mathcal{O}_{\mathbb{P}^1}^{\oplus 19}, & g=11 \\
\mathcal{O}_{\mathbb{P}^1}(2) \oplus \mathcal{O}_{\mathbb{P}^1}(1)^{\oplus 12} \oplus \mathcal{O}_{\mathbb{P}^1}^{\oplus 17} \oplus \mathcal{O}_{\mathbb{P}^1}(-1)^{\oplus 3}, & g=12\\
\mathcal{O}_{\mathbb{P}^1}(2) \oplus \mathcal{O}_{\mathbb{P}^1}(1)^{\oplus g-1} \oplus \mathcal{O}_{\mathbb{P}^1}^{\oplus 19} \oplus \mathcal{O}_{\mathbb{P}^1}(-1)^{\oplus 2g-22}, & g \geq 13.
             \end{array}\right.$$
\end{teo}
\begin{proof}
Proposition \ref{h0h1ext1f} gives $h^{0}(Ext^1_f(\Omega^1_{X/\mathbb{P}^1},\mathcal{O}_X))=2g+20$ and moreover tells that all the negative $a_i$ equal $-1$. Proposition \ref{equalsthecodimensionofkg} and Theorem \ref{cg} give the number of negative summands. Proposition \ref{unsoloai=2} applied to Lemma \ref{h2tx-fk3} gives that there is only one summand $a_i \geq 2$ and it equals $2$.\\
These facts uniquely determine the splitting type of $Ext^1_f(\Omega^1_{X/\mathbb{P}^1},\mathcal{O}_X)$.
\end{proof}
\begin{cor}
Let $f$ be a general K3-type fibration of genus $g \geq 7$. Then sequence \emph{(\ref{tbkfext1fnpsif})} splits i.e.  $Ext^1_f(\Omega^1_{X/\mathbb{P}^1},\mathcal{O}_X) \cong \mathcal{O}_{\mathbb{P}^1}(2) \oplus N_{\Psi_f}$.
\end{cor}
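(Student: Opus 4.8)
The plan is to exploit the splitting type furnished by Theorem \ref{decomposizioneext1fk3}, according to which $Ext^1_f(\Omega^1_{X/\mathbb{P}^1},\mathcal{O}_X)$ contains exactly one summand $\mathcal{O}_{\mathbb{P}^1}(2)$ while all its remaining summands have degree at most $1$. Since $B \cong \mathbb{P}^1$ we have $T_B = T_{\mathbb{P}^1} \cong \mathcal{O}_{\mathbb{P}^1}(2)$, so sequence (\ref{tbkfext1fnpsif}) reads
$$0 \rightarrow \mathcal{O}_{\mathbb{P}^1}(2) \xrightarrow{\kappa_f} Ext^1_f(\Omega^1_{X/\mathbb{P}^1},\mathcal{O}_X) \rightarrow N_{\Psi_f} \rightarrow 0,$$
and it suffices to show that $\kappa_f$ realizes its source as a direct summand of the middle term.

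First I would fix a Grothendieck decomposition $Ext^1_f(\Omega^1_{X/\mathbb{P}^1},\mathcal{O}_X) \cong \mathcal{O}_{\mathbb{P}^1}(2) \oplus \bigoplus_{j} \mathcal{O}_{\mathbb{P}^1}(c_j)$ with $c_j \leq 1$ for all $j$, as provided by Theorem \ref{decomposizioneext1fk3}, and analyze the components of $\kappa_f$ with respect to it. Applying Lemma \ref{morfismofascisuP1} to $\kappa_f$ (whose source is the single line bundle $\mathcal{O}_{\mathbb{P}^1}(2)$, so that $\min_i\{a_i\} = 2$ in the notation of that lemma), I find that every component $\mathcal{O}_{\mathbb{P}^1}(2) \to \mathcal{O}_{\mathbb{P}^1}(c_j)$ with $c_j < 2$ vanishes. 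Hence $\kappa_f$ factors entirely through the distinguished summand $\mathcal{O}_{\mathbb{P}^1}(2)$, inducing an endomorphism $\varphi \in \emph{Hom}(\mathcal{O}_{\mathbb{P}^1}(2),\mathcal{O}_{\mathbb{P}^1}(2)) \cong H^{0}(\mathcal{O}_{\mathbb{P}^1}) \cong \mathbb{C}$, i.e. multiplication by a scalar.

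The final step is to observe that this scalar is nonzero. By the exactness of (\ref{tbkfext1fnpsif}) the map $\kappa_f$ is injective, so $\varphi$ cannot be the zero endomorphism and is therefore an isomorphism. Composing the projection onto the $\mathcal{O}_{\mathbb{P}^1}(2)$ summand with $\varphi^{-1}$ then produces a retraction $r$ with $r \circ \kappa_f = \emph{id}$, so the sequence splits and $Ext^1_f(\Omega^1_{X/\mathbb{P}^1},\mathcal{O}_X) \cong \mathcal{O}_{\mathbb{P}^1}(2) \oplus N_{\Psi_f}$, where the second summand is identified with $\bigoplus_j \mathcal{O}_{\mathbb{P}^1}(c_j)$.

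There is no genuine obstacle here: the entire weight of the argument rests on the splitting type computed in Theorem \ref{decomposizioneext1fk3} (in particular on the degree-$2$ summand occurring with multiplicity one and strictly dominating all the others), together with the injectivity of $\kappa_f$, after which Lemma \ref{morfismofascisuP1} does the rest mechanically. The only point demanding a moment's care is confirming that no further summand can reach degree $2$, which is precisely the content of that theorem; once this is granted the splitting is automatic.
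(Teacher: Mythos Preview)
Your argument is correct and, in the range $g\geq 7$ where Theorem~\ref{decomposizioneext1fk3} applies, it is actually more streamlined than the paper's own proof. The paper distinguishes two cases: for $g\leq 9$ it uses freeness of $f$ together with Lemma~\ref{morfismofascisuP1} applied to the \emph{quotient} map $Ext^1_f\twoheadrightarrow N_{\Psi_f}$ to force $b_i\geq 0$, and then invokes an $\mathrm{Ext}^1$ vanishing; for $g\geq 10$ it reconstructs the full splitting type of $N_{\Psi_f}$ from the numerical data $h^0(N_{\Psi_f})$, $h^1(N_{\Psi_f})$, $h^1(N_{\Psi_f}(-1))$ and the bound $b_i\leq 1$ coming from Proposition~\ref{unsoloai=2}, and then compares with Theorem~\ref{decomposizioneext1fk3}. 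You instead apply Lemma~\ref{morfismofascisuP1} directly to the \emph{inclusion} $\kappa_f$, observing that the unique degree-$2$ summand strictly dominates all others, so $\kappa_f$ must land isomorphically in it. This single stroke replaces the paper's case split and its separate determination of the splitting of $N_{\Psi_f}$; the latter then drops out for free as the complementary summand.

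One caveat worth flagging: the corollary as stated carries no hypothesis on $g$, while Theorem~\ref{decomposizioneext1fk3} is proved only for $g\geq 7$. For $3\leq g\leq 6$ the paper itself remarks (end of Section~6) that several summands of degree $\geq 2$ appear in $Ext^1_f(\Omega^1_{X/\mathbb{P}^1},\mathcal{O}_X)$, so your key step---that the image of $\kappa_f$ is confined to a single $\mathcal{O}_{\mathbb{P}^1}(2)$ summand---is no longer available verbatim. The paper's own treatment of low genus proceeds differently (via freeness), so if you intend your proof to cover those cases you would need an extra argument there; within the range $g\geq 7$ your proof is complete and arguably preferable.
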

\begin{proof}
Proposition \ref{h0h1ext1f} and equations (\ref{relationcohomologyext1npsif}) and (\ref{npsif-1ext1f-1}) give $h^{0}(N_{\Psi_f})=2g+17$, $h^{1}(N_{\Psi_f})=0$ and $h^{1}(N_{\Psi_f}(-1))=h^{1}(Ext^1_f(\Omega^1_{X/\mathbb{P}^1},\mathcal{O}_X)(-1))$. Moreover, by Proposition \ref{unsoloai=2} one has $b_i \leq 1$ for all $i$. These facts and Theorem \ref{decomposizioneext1fk3} uniquely determine the splitting of $N_{\Psi_f}$. The statement follows.
\end{proof}
\end{section}
\begin{section}{Geometric interpretation of Theorem \ref{decomposizioneext1fk3}}
\begin{lem}
\label{inclusionmg}
Let $f:X \rightarrow \mathbb{P}^1$ be a non-isotrivial fibration of genus $g$ defined by a linear pencil $\Lambda \subset U \subset |C|$ on a smooth projective surface $S$ such that the curves in $\Lambda$ are smooth at the base points, where $U$ is the open set where the moduli map $\mu:U \rightarrow \overline{M}_g$ is defined. If $\mu$ is generically finite, then there is an inclusion $\mathcal{O}_{\mathbb{P}^1}(2) \oplus \mathcal{O}_{\mathbb{P}^1}(1)^{\oplus \dim|C|-1} \hookrightarrow \Psi_f^{*}T_{{{\overline{M}}}_{g}}$.
\end{lem}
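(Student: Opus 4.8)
The plan is to exhibit the desired rank-$(\dim|C|)$ subbundle as the image of the tangent bundle of the pencil under the differential of the moduli map. Write $N \doteq \dim|C|$ and view the pencil $\Lambda$ as a line $\ell \cong \mathbb{P}^1$ inside $|C| \cong \mathbb{P}^N$. The first observation is purely projective: restricting the Euler sequence of $\mathbb{P}^N$ to $\ell$ splits the tangent bundle as $T_{\mathbb{P}^N}|_{\ell} \cong \mathcal{O}_{\mathbb{P}^1}(2) \oplus \mathcal{O}_{\mathbb{P}^1}(1)^{\oplus N-1}$, where the $\mathcal{O}_{\mathbb{P}^1}(2)$ summand is $T_{\ell}$ and the $\mathcal{O}_{\mathbb{P}^1}(1)^{\oplus N-1}$ summand is the normal bundle $N_{\ell/\mathbb{P}^N}$. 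This is exactly the bundle appearing on the left of the claimed inclusion, so it suffices to produce a morphism of sheaves $T_{\mathbb{P}^N}|_{\ell} \to Ext^1_f(\Omega^1_{X/\mathbb{P}^1},\mathcal{O}_X)$ which is injective.

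First I would construct the map over the open locus. Since the image of $\Psi_f$ lies in the smooth locus of $\overline{M}_g$, the pullback $\Psi_f^{*}T_{\overline{M}_g}$ is locally free and, by (\ref{ext1ftmg}), isomorphic to $Ext^1_f(\Omega^1_{X/\mathbb{P}^1},\mathcal{O}_X)$. On $\ell \cap U$ the morphism $\Psi_f$ agrees with $\mu$ (the curves in $\Lambda$ being smooth at the base points, the fibres of $f$ are the strict transforms of the members of $\Lambda$ and are abstractly isomorphic to them), so the differential of $\mu$ gives a morphism $d\mu : T_{\mathbb{P}^N}|_{\ell \cap U} \to \Psi_f^{*}T_{\overline{M}_g}|_{\ell \cap U}$. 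On the $T_{\ell}$-summand this differential is nothing but the Kodaira--Spencer map $\kappa_f$ of the fibration appearing in (\ref{tbkfext1fnpsif}), which is already defined over all of $\mathbb{P}^1$; on the normal summand $N_{\ell/\mathbb{P}^N}$ it records, direction by direction, the first order deformation of $\Psi_f$ induced by the corresponding first order deformation of the pencil $\Lambda$ inside $|C|$.

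The main obstacle is to check that $d\mu$ extends to a morphism of vector bundles on all of $\ell \cong \mathbb{P}^1$, since a priori it is only defined over $\ell \cap U$ and the sheaf $Hom(T_{\mathbb{P}^N}|_{\ell}, \Psi_f^{*}T_{\overline{M}_g})$ could acquire poles at the finitely many points of $\ell \setminus U$. Here the hypothesis that $\Psi_f$ has image in the smooth locus is essential: it guarantees that the target is locally free across those points, and the deformation of $\Psi_f$ attached to a deformation of the pencil is a global object over $\mathbb{P}^1$, so the tangent map has no poles and extends. Granting this, injectivity follows from generic injectivity together with the fact that $T_{\mathbb{P}^N}|_{\ell}$ is locally free, hence torsion free: the kernel of $d\mu$ would be a torsion subsheaf and therefore zero. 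Finally, generic injectivity is precisely the hypothesis that $\mu$ is generically finite, which forces $d\mu$ to be injective at the general point of $|C|$, hence at the general point of $\ell$ as soon as $\Lambda$ meets the open locus where $d\mu$ attains maximal rank (in particular for a general pencil). This produces the inclusion $\mathcal{O}_{\mathbb{P}^1}(2) \oplus \mathcal{O}_{\mathbb{P}^1}(1)^{\oplus \dim|C|-1} \hookrightarrow Ext^1_f(\Omega^1_{X/\mathbb{P}^1},\mathcal{O}_X)$.
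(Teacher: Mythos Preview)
Your approach is the same as the paper's: identify the left-hand bundle with $T_{\mathbb{P}^{\dim|C|}}|_{\ell}$ for the line $\ell$ corresponding to $\Lambda$, use $\mu|_{\ell}=\Psi_f$ (valid because the curves in $\Lambda$ are smooth at the base points), and obtain the inclusion from the differential $d\mu$ together with the generic finiteness of $\mu$. The paper's proof is a single displayed chain of isomorphisms and one inclusion; your version spells out the extension across $\ell\setminus U$ and the passage from generic injectivity to sheaf injectivity, and your parenthetical caveat about needing $\ell$ to meet the maximal-rank locus of $d\mu$ is a point the paper leaves implicit as well.
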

\begin{proof}
Let $l$ be the line corresponding to $\Lambda$ in the projective space $\mathbb{P}^{\dim|C|} \cong |C|$. Since $\mu$ is generically finite and $\mu_{|l}=\Psi_f$ (by definition and the fact that the curves in $\Lambda$ are smooth at the base points)
one has
$$\mathcal{O}_{\mathbb{P}^1}(2) \oplus \mathcal{O}_{\mathbb{P}^1}(1)^{\oplus \dim|C|-1} \cong {T_{\mathbb{P}^{\dim|C|}}}_{|l} \cong {T_U}_{|l} \hookrightarrow {\mu^{*}{{T_{\overline{M}}}_{g}}}_{|l} \cong \Psi_f^{*}T_{{{\overline{M}}}_{g}}.$$
\end{proof}
In the case of K3-type fibrations, one has that $\mu$ is finite by \cite{CK}, Proposition 1.2, thus Lemma \ref{inclusionmg} applies.
The inclusion in Lemma \ref{inclusionmg} induces an inclusion of the spaces of global sections $$H^{0}(\mathcal{O}_{\mathbb{P}^1}(2) \oplus \mathcal{O}_{\mathbb{P}^1}(1)^{\oplus g-1}) \hookrightarrow H^{0}(Ext^1_f(\Omega^1_{X/\mathbb{P}^1},\mathcal{O}_X)) \cong \text{Def}^{\prime}_f(k[\epsilon])$$
which admits a nice geometrical interpretation. Indeed $H^{0}\left(\mathcal{O}_{\mathbb{P}^1}(2) \oplus \mathcal{O}_{\mathbb{P}^1}(1)^{\oplus g-1}\right)$ can be seen as the vector subspace of $\text{Def}'_f(k[\epsilon])$ parameterizing first order deformations of the morphism $f$ obtained by keeping $S$ fixed and varying the linear pencil $\Lambda$ inside $|C|$.\\
The inclusion in the lemma is as a direct summand for $g \geq 7$ (see Theorem \ref{decomposizioneext1fk3}). In other words, the first $g$ summands in the splitting of $Ext^1_f(\Omega^1_{X/\mathbb{P}^1},\mathcal{O}_X)$ for $g \geq 7$ can be interpreted as those "coming from" deformations of $f$ inducing a trivial deformation of the surface $S$.\\
In the cases $g=7,8,9,10,12$ there are more summands of the type $\mathcal{O}_{\mathbb{P}^1}(1)$ in the splitting of $Ext^1_f(\Omega^1_{X/\mathbb{P}^1},\mathcal{O}_X)$ and their number equal the dimension of the general fibre of the morphism $c_g$.\\
In conclusion, also keeping in mind Proposition \ref{equalsthecodimensionofkg}, the splitting of $Ext^1_f(\Omega^1_{X/\mathbb{P}^1},\mathcal{O}_X)$ for $g \geq 7$ can be restated as follows:
\begin{teo}
\label{splittingrestatedk3}
Let $f$ be a general K3-type fibration of genus $g \geq 7$, let $a_g$ be the dimension of the general fibre of the morphism $c_g$, and $b_g$ the codimension of $\mathcal{K}_g$ in $\mathcal{M}_g$. Then
$$Ext^1_f(\Omega^1_{X/\mathbb{P}^1},\mathcal{O}_X) \cong \mathcal{O}_{\mathbb{P}^1}(2) \oplus \mathcal{O}_{\mathbb{P}^1}(1)^{\oplus g-1} \oplus \mathcal{O}_{\mathbb{P}^1}(1)^{\oplus a_g} \oplus \mathcal{O}_{\mathbb{P}^1}^{\oplus 2g-3-a_g-b_g} \oplus \mathcal{O}_{\mathbb{P}^1}(-1)^{\oplus b_g}.$$
\end{teo}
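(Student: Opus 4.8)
The plan is to deduce this statement directly from Theorem \ref{decomposizioneext1fk3}, recognizing the explicit multiplicities found there as the geometric invariants $a_g$ and $b_g$. Since the isomorphism type of $Ext^1_f(\Omega^1_{X/\mathbb{P}^1},\mathcal{O}_X)$ has already been completely determined in Theorem \ref{decomposizioneext1fk3}, the only remaining work is a casewise comparison together with the geometric attribution of each block of summands. There is therefore no new analytic input; the content of the restatement is organizational.

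First I would record the numerical data. For a K3-type fibration the pencil $\Lambda$ lies in $|L|$ with $\dim|L| = g$ by Proposition \ref{saintdonat}, so Lemma \ref{inclusionmg} produces a summand $\mathcal{O}_{\mathbb{P}^1}(2) \oplus \mathcal{O}_{\mathbb{P}^1}(1)^{\oplus(g-1)}$, which accounts for the first $g$ terms and supplies their geometric meaning as the first-order deformations of $f$ that keep $S$ fixed and move $\Lambda$ inside $|L|$. The invariant $a_g = \dim F_g$ is the fibre dimension of $c_g$ given by Proposition \ref{dimFg}, while $b_g = \text{codim}_{\mathcal{M}_g}(\mathcal{K}_g)$ is given by equation (\ref{codimkg}); by Proposition \ref{equalsthecodimensionofkg} the latter equals the number of negative summands in the splitting.

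Then I would substitute these values into the claimed formula and check agreement with Theorem \ref{decomposizioneext1fk3} in each of the five ranges. For $7 \leq g \leq 9$ one has $a_g = 22-2g$ and $b_g = 0$, so the number of $\mathcal{O}_{\mathbb{P}^1}(1)$-summands is $(g-1)+(22-2g) = 21-g$ and the number of trivial summands is $2g-3-a_g-b_g = 4g-25$, matching the first line; for $g=10$ one gets $12$ copies of $\mathcal{O}_{\mathbb{P}^1}(1)$, $13$ trivial summands, and one $\mathcal{O}_{\mathbb{P}^1}(-1)$; for $g=11$ ten copies of $\mathcal{O}_{\mathbb{P}^1}(1)$ and $19$ trivial; for $g=12$, with $a_g=1$, $b_g=3$, twelve copies of $\mathcal{O}_{\mathbb{P}^1}(1)$, $17$ trivial, and three $\mathcal{O}_{\mathbb{P}^1}(-1)$; and for $g \geq 13$, with $a_g=0$, $b_g=2g-22$, exactly $g-1$ copies of $\mathcal{O}_{\mathbb{P}^1}(1)$, $19$ trivial, and $2g-22$ copies of $\mathcal{O}_{\mathbb{P}^1}(-1)$. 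In every case the total rank is $1 + (g-1) + a_g + (2g-3-a_g-b_g) + b_g = 3g-3$, as it must be by Lemma \ref{relativedualityExt1f}.

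The main point requiring care is the \emph{geometric} attribution rather than the bare isomorphism: one must verify that the block $\mathcal{O}_{\mathbb{P}^1}(2) \oplus \mathcal{O}_{\mathbb{P}^1}(1)^{\oplus(g-1)}$ furnished by Lemma \ref{inclusionmg} is an honest direct summand of $Ext^1_f(\Omega^1_{X/\mathbb{P}^1},\mathcal{O}_X)$ and not merely a subsheaf, and that the further $a_g$ copies of $\mathcal{O}_{\mathbb{P}^1}(1)$ are genuinely additional. The hard part is thus not a computation but this identification, which for $g \geq 7$ is forced by the explicit decomposition in Theorem \ref{decomposizioneext1fk3}: since that decomposition lists precisely $g-1+a_g$ copies of $\mathcal{O}_{\mathbb{P}^1}(1)$ and the Lemma already exhibits $g-1$ of them splitting off, the remaining $a_g$ are accounted for by the positive-dimensional fibres of $c_g$, completing the restated decomposition.
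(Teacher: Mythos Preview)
Your proposal is correct and follows essentially the same approach as the paper: the theorem is a restatement of Theorem \ref{decomposizioneext1fk3} obtained by matching the explicit multiplicities against the values of $a_g$ from Proposition \ref{dimFg} and of $b_g$ from (\ref{codimkg}), with Lemma \ref{inclusionmg} and Proposition \ref{equalsthecodimensionofkg} supplying the geometric interpretation of the $\mathcal{O}_{\mathbb{P}^1}(2)\oplus\mathcal{O}_{\mathbb{P}^1}(1)^{\oplus(g-1)}$ block and of the $\mathcal{O}_{\mathbb{P}^1}(-1)^{\oplus b_g}$ block respectively. The paper presents this as a direct observation rather than a formal proof, and your casewise verification makes the numerical check explicit.
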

\noindent
One can consider the vector space $H^{0}(Ext^1_f(\Omega^1_{X/\mathbb{P}^1},\mathcal{O}_X)(-1))$, which is the subspace of $H^{0}(Ext^1_f(\Omega^1_{X/\mathbb{P}^1},\mathcal{O}_X))$ parameterizing first order deformations of $f$ inducing trivial deformations on a fixed fibre $F=f^{*}\mathcal{O}_{\mathbb{P}^1}(1)$.\\
From Theorem \ref{splittingrestatedk3} one has $h^{0}(Ext^1_f(\Omega^1_{X/\mathbb{P}^1},\mathcal{O}_X)(-1))=2+(g-1)+a_g$. The first summand is the number of parameters counting automorphisms of $\mathbb{P}^1$ keeping a point fixed, the second is the number of parameters counting lines in $|C| \cong \mathbb{P}^g$ passing through a fixed point, the third is the number of parameters counting K3 surfaces containing a curve isomorphic to $F$.
\begin{oss}
Theorem \ref{splittingrestatedk3} shows that, for $g=7,8,9,11$, the morphism $\Psi_f$ is a so-called \emph{minimal free morphism}. Given a projective variety $X$ of dimension $n$ and a free morphism $h:\mathbb{P}^1 \rightarrow X$, $h$ is said to be \emph{minimal} if the splitting of $h^{*}T_X$ is:
$$h^{*}T_X \cong \mathcal{O}_{\mathbb{P}^1}(2) \oplus \mathcal{O}_{\mathbb{P}^1}(1)^{\oplus r} \oplus \mathcal{O}_{\mathbb{P}^1}^{\oplus n-r-1}$$
for some nonnegative integer $r$. The number $r+1$ equals the dimension of the subvariety of $X$ swept out by deformations of $h$ passing through a fixed general point of $X$ (see \cite{DE}, p. 93).\\
\end{oss}
Note that for $g \leq 6$ the rank of $Ext^1_f(\Omega^1_{X/\mathbb{P}^1},\mathcal{O}_X)$, which is $3g-3$, is too small with respect to the dimension of the space of global sections, which is $2g+20$, hence more summands of the kind $\mathcal{O}_{\mathbb{P}^1}(a_i)$, $a_i \geq 2$, must appear in the splitting of $Ext^1_f(\Omega^1_{X/\mathbb{P}^1},\mathcal{O}_X)$ and the structure shown in Theorem \ref{splittingrestatedk3} is lost. Moreover, $\Psi_f$ is not a minimal morphism anymore.\\ \\
%
\textbf{Acknowledgements}\\
This paper arose as an attempt to carry on Sernesi's work on deformations of fibrations. The author is grateful to him for suggesting the problem and providing the technical framework. The author also thanks Ciro Ciliberto and Flaminio Flamini for useful discussions.
\end{section}
\smallskip
\small

$\left.\right.$\\
$\left.\right.$\\
\noindent
Luca Benzo\\
Dipartimento di Matematica\\ 
Università di Roma Tor Vergata\\ 
Via della Ricerca Scientifica 1\\ 
00133 Roma, Italy.\\ 
e-mail benzo@mat.uniroma2.it


\begin{thebibliography}{plain}

\bibitem{A}
S. JU. ARAKELOV,
\emph{Families of algebraic curves with fixed degeneracy},
Math. USSR Izvestija 5 (1971), 1277-1302.

\bibitem{ACGH}
E. ARBARELLO, M. CORNALBA, P.A. GRIFFITHS, J. HARRIS,
\emph{Geometry of algebraic curves vol. I},
Springer-Verlag, 1985.

\bibitem{BPV}
W. BARTH, C. PETERS, A. VAN DE VEN,
\emph{Compact complex surfaces},
Springer-Verlag, 1984.

\bibitem{B1}
A. BEAUVILLE,
\emph{Fano 3-folds and K3 surfaces},
Proceedings of the Fano Conference, Univ. di Torino (2004), 175-184.

\bibitem{CE}
H. CARTAN, S. EILENBERG,
\emph{Homological Algebra},
Princeton University Press, 1956.

\bibitem{CD}
C. CILIBERTO, T. DEDIEU,
\emph{On universal Severi varieties of low genus K3 surfaces},
Math. Zeit. 271-3 (2012), pp 953-960.

\bibitem{CK}
C. CILIBERTO, A.L. KNUTSEN,
\emph{On $k$-gonal loci in Severi varieties on general K3 surfaces and rational curves on hyperkähler manifolds},
 arXiv:1204.4838.

\bibitem{CLM}
C. CILIBERTO, A.F. LOPEZ, R. MIRANDA,
\emph{Projective degenerations of K3 surfaces, Gaussian maps, and Fano 3-folds},
Invent. Math. (1993), 641-667.


\bibitem{CU}
F. CUKIERMAN, D. ULMER,
\emph{Curves of genus ten on K3 surfaces},
Compositio Math. 89-1 (1993), 81-90.

\bibitem{DE}
O. DEBARRE,
\emph{Higher-dimensional algebraic geometry},
Springer-Verlag, 2001.


\bibitem{FLA}
F. FLAMINI,
\emph{$\mathbb{P}^r$-scrolls arising from Brill-Noether theory and K3 surfaces},
Manuscr. Math. 132 (2010), 199-220.


\bibitem{GR}
A. GROTHENDIECK,
\emph{Techniques de construction et théorèmes
d'existence en géométrie algébrique IV : les schemas de Hilbert},
Séminaire Bourbaki, 1960/61, Exp. 221, Astérisque hors serie 6, Soc. Math. Fr. (1997).

\bibitem{GS}
J. GUTIERREZ, T. SHASKA,
\emph{Hyperelliptic curves with extra involutions},
J. London Math. Soc. 8 (2005), 102-115.

\bibitem{HMO}
J. HARRIS, I. MORRISON,
\emph{Moduli of curves},
Springer, 1998.

\bibitem{HMU}
J. HARRIS, D. MUMFORD,
\emph{On the Kodaira dimension of the moduli
space of curves},
Inventiones Math. 67 (1982), 23-86.

\bibitem{H}
R. HARTSHORNE,
\emph{Algebraic geometry}, Springer Verlag, 1977.

\bibitem{H2}
R. HARTSHORNE,
\emph{Deformation theory}, Springer-Verlag, 2010.

\bibitem{K}
S. KLEIMAN,
\emph{Relative duality for quasi-coherent sheaves}, Compositio Math. 41 (1980), 39-60.

\bibitem{KO}
J. KOLLÁR,
\emph{Rational curves on algebraic varieties}, Springer-Verlag, 1999.

\bibitem{L}
H. LANGE,
\emph{Universal families of extensions},
J. Algebra 83 (1983), 101-112.

\bibitem{MA} A. MAYER,
\emph{Families of K3 surfaces}, Nagoya Math. J. 48 (1972), 1-17.

\bibitem{M1}
S. MUKAI,
\emph{Curves, K3 surfaces and Fano 3-folds of genus $\leq 10$},
Algebraic geometry and commutative algebra I, 357-377, Kinokuniya, Tokyo (1988).

\bibitem{M2}
S. MUKAI,
\emph{Fano 3-folds},
Complex projective geometry (Trieste-Bergen, 1989), London Math. Soc. Lecture Note Ser. 179 (1992), 255-263.

\bibitem{RE}
M. REID,
\emph{Hyperelliptic linear systems on a K3 surface},
J. London Math. Soc. 13-2 (1976), 427-437.

\bibitem{SA}
B. SAINT-DONAT,
\emph{Projective models of K3 surfaces},
Amer. J. Math. 96 (1974), 602-639.

\bibitem{S}
E. SERNESI,
\emph{Deformations of algebraic schemes},
Springer-Verlag, 2006.

\bibitem{S4}
E. SERNESI,
\emph{General curves on algebraic surfaces},  J. Reine Angew. Math. 700 (2015), 209-233.

\bibitem{SE}
F. SERRANO,
\emph{Fibred surfaces and moduli}, Duke Math. J. 67 (1992), 407-421.

\bibitem{SE2}
F. SERRANO,
\emph{Isotrivial fibred surfaces},
Annali di Mat. P. A. (4) 171 (1996), 63-81.

\bibitem{SZ}
L. SZPIRO,
\emph{Propriétés numériques du faisceau dualizant relatif},
Asterisque 86 (1981), 44-78.

\end{thebibliography}
\end{document}